\documentclass[12pt,a4paper, reqno]{article}

% Page style
\usepackage[top=1in, bottom=1in, left=1in, right=1in]{geometry}

% Packages
\usepackage{indentfirst} % identation on the first paragraph
\usepackage{amsmath, amsthm, amssymb,xfrac, mathrsfs} % math packages
\usepackage{tikz} % figures
\usetikzlibrary{hobby}
\usepackage{wrapfig} % allows to put text aroung figures
\usepackage{xcolor} % colors
\usepackage{verbatim} % multiline comments
\usepackage{enumerate} % making lists

\usepackage{hyperref} % insert this allways last
\hypersetup{
colorlinks=true,
citecolor=black!50!red,
linkcolor=black!50!red,
linktoc=all
}
\usepackage[all]{hypcap} % to correct positioning for the hyperref

% Counters

% Constants counter
\newcounter{constant}
\newcommand{\newconstant}[1]{\refstepcounter{constant}\label{#1}}
\newcommand{\useconstant}[1]{c_{\textnormal{\tiny \ref{#1}}}}
\setcounter{constant}{0}

% Big constants counter
\newcounter{bigconstant}
\newcommand{\newbigconstant}[1]{\refstepcounter{bigconstant}\label{#1}}
\newcommand{\usebigconstant}[1]{C_{\textnormal{\tiny \ref{#1}}}}
\setcounter{bigconstant}{0}

% Definitions
\newtheorem{teo}{Theorem}[section]
\newtheorem{prop}[teo]{Proposition}
\newtheorem{lemma}[teo]{Lemma}
\newtheorem{claim}[teo]{Claim}
\newtheorem{cor}[teo]{Corollary}

\numberwithin{equation}{section} % numbering according to sections

\theoremstyle{definition}

\newtheorem{remark}[teo]{Remark}

% Commands
\newcommand{\PP}{\mathbb{P}}
\newcommand{\EE}{\mathbb{E}}
\newcommand{\RR}{\mathbb{R}}

\newcommand{\NN}{\mathbb{N}}
\newcommand{\ZZ}{\mathbb{Z}}

\newcommand{\charf}[1]{\mathbf{1}_{#1}}

\DeclareMathOperator{\dist}{d}

\DeclareMathOperator{\poisson}{Poisson}

\begin{document}
% Text

\title{Spread of an infection on the zero range process}

\author{Rangel Baldasso\footnote{Email: \ baldasso@impa.br; \ Bar-Ilan University, 5290002, Ramat Gan, Israel} \and Augusto Teixeira\footnote{Email: \ augusto@impa.br; \ IMPA, Estrada Dona Castorina 110, 22460-320,\newline Rio de Janeiro, RJ, Brazil}}

\maketitle

\begin{abstract}
We study the spread of an infection on top of a moving population. The environment evolves as a zero range process on the integer lattice starting in equilibrium. At time zero, the set of infected particles is composed by those which are on the negative axis, while particles at the right of the origin are considered healthy. A healthy particle immediately becomes infected if it shares a site with an infected particle. We prove that the front of the infection wave travels to the right with positive and finite velocity. As a central step in the proof of these results, we prove a space-time 
decoupling for the zero range process which is interesting on its own. Using a sprinkling technique, we derive an estimate on the correlation of functions of the space of trajectories whose supports are sufficiently far away.
\end{abstract}

\bigskip

\noindent \textup{2010} Mathematics Subject Classification: 60K37, 60K35, 82C22.

\noindent Keywords: Zero range process, decoupling, infection process.

\begin{comment}
\subjclass[2010]{60K37, 60K35, 82C22}
\keywords{Zero range process, decoupling, infection process.}
\end{comment}

\bigskip

\section{Introduction}\label{zrpsec:intro}

% What is the paper about?
\par There are many mathematical models for the spread of an infectious disease. They are interesting not only because of their potential applications, but also as mathematical objects themselves, since the understanding of such models usually requires some new and exciting mathematics. There are deterministic ways of modeling such diseases, such as the SIR model, see~\cite{sir}, as well as stochastic modeling, such as the contact process,~\cite{liggett} and~\cite{bg}, and the $x+y \to 2x$ model, considered in~\cite{rs},~\cite{cqr} and~\cite{br1}. We consider here an infection model that evolves on top of a zero range process in the one-dimensional integer lattice.

% Zero range process
\par The zero range process in $\ZZ$ is a system where particles interact only when they are at the same site. The interaction controls the rate with which particles leave the site and this rate is given by a function  $g:\NN_{0} \to \RR_{+}$ of the number of particles with $g(0)=0$. Particles jump to a uniformly chosen nearest neighbor. We defer the precise definition of the model to Section~\ref{sec:zrp}.

% Assumptions on rate function
\par Assume that there exist positive constants $\Gamma_{-} \leq 1 \leq \Gamma_{+}$ such that
\begin{equation}\label{eq:g_condition}
\Gamma_{-} \leq g(k)-g(k-1) \leq \Gamma_{+}, \qquad \text{for all } k \in \NN.
\end{equation}
In this case, there exist explicit formulas for the invariant measures of the zero range process, see Chapter 2 of~\cite{kipnis_landim}. In fact, Assumption~\eqref{eq:g_condition} implies that, for every $\rho \in \RR_{+}$, there exists an associated product invariant measure with density $\rho$. Let $\EE_{\rho}$ denote the expectation with respect to the distribution of a zero range process with initial state given by the invariant measure with density $\rho$.

% Infection
Given an initial configuration $\eta_{0}$ for the zero range process, we declare all particles to the left of zero, $\xi_{0}$, infected. Define also $\zeta_{0}=\eta_{0}-\xi_{0}$ as the configuration of healthy particles.

% Infection evolution
\par We assume that the process $\xi+\zeta$ evolves as a zero range process with rate function $g$. Besides, a healthy particle becomes immediately infected when it shares a site with some already infected particle. In particular, in any non-empty site, either all particles are healthy or all particles are infected. Denote by $\xi_{t}$ the configuration of infected particles at time $t$.

% Front wave
\par We define the front of the infection wave as
\begin{equation}
r_{t}=\sup\{x:\xi_{t}(x)>0\}.
\end{equation}
If $\rho>0$, and we choose $\eta_{0}$ according to the invariant measure with density $\rho$, then $r_{t} \in \ZZ$ for all $t \geq 0$ almost surely.

\par The existence of dependencies in the jump rates makes the understanding of the zero range process complicated. However, the way dependencies spread is related to how the process $r_{t}$ grows. As we shall see, bounds on the speed of the infection front will provide bounds on the velocity with which dependencies can spread through space and time.

%%%%%
\newconstant{c:finite_speed}
\newconstant{c:positive_speed}
%%%%%

% Main result
\par The first result we prove states that $r_{t}$ has finite velocity with high probability.

\begin{teo}\label{teo:finite_velocity}
For any $\rho>0$, there exist $v_{+}=v_{+}(\rho)>0$ such that, for all $L>0$,
\begin{equation}\label{eq:finite_speed}
\PP_{\rho}\left[\begin{array}{cl}
r_{t} \geq v_{+}t+L, \\
\text{for some } t \geq 0
\end{array}\right] \leq \useconstant{c:finite_speed}e^{-\useconstant{c:finite_speed}^{-1}\log^{\sfrac{5}{4}}L},
\end{equation}
for some positive constant $\useconstant{c:finite_speed}$ that depends only on the density $\rho$ and the rate function $g$.
\end{teo}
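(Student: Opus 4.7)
The approach is a multi-scale renormalization built on the decoupling inequality for the zero range process announced in the abstract. For a base estimate, I would exploit the bound $g(k)-g(k-1)\leq \Gamma_{+}$, which implies $g(k)\leq k\Gamma_{+}$ and hence that each individual particle jumps at rate at most $\Gamma_{+}$, regardless of the environment. The displacement of any fixed particle in time $\delta$ is then stochastically dominated by $\poisson(\Gamma_{+}\delta)$, and with $v_{+}$ chosen much larger than $\Gamma_{+}$ this gives a stretched-exponentially small probability $p_{0}$ that the infection crosses a scale-$L_{0}$ space-time box of shape $L_{0}\times (L_{0}/v_{+})$.

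Next, I would define scales $L_{n}=L_{0}\,2^{n}$ and a decreasing velocity sequence $v_{n}\searrow v_{+}$ with $v_{n}-v_{+}$ summable. Let $A_{n}$ be the event that the infection crosses a space-time box of shape $L_{n}\times (L_{n}/v_{n})$ at average speed exceeding $v_{n}$, and set $p_{n}=\PP_{\rho}(A_{n})$. The cascading step asserts that occurrence of $A_{n+1}$ in a scale-$(n+1)$ box forces $A_{n}$ to occur in at least two spatially disjoint sub-boxes of scale $L_{n}$ sitting inside it; this should follow by a pigeonhole argument that uses the slack $v_{n}-v_{n+1}$ and is applied to the causal space-time chain of particle trajectories that carries the infection across the large box.

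The two disjoint scale-$n$ bad sub-boxes would then be handled using the decoupling inequality, which carries a sprinkling cost $\epsilon_{n}$ (slightly enlarging the density) and a probabilistic error of the form $\exp(-c\,\epsilon_{n}^{a} L_{n}^{b})$. This yields a recursion of the shape
\begin{equation*}
p_{n+1} \;\leq\; C L_{n}^{2}\,\tilde p_{n}^{\,2} \;+\; \exp\bigl(-c\,\epsilon_{n}^{a} L_{n}^{b}\bigr),
\end{equation*}
where $\tilde p_{n}$ is the analogue of $p_{n}$ under the slightly increased density $\rho+\epsilon_{n}$. Choosing $\epsilon_{n}$ to decay polynomially in $n$ so that $\sum_{n}\epsilon_{n}$ remains small, iterating the recursion should produce $p_{n}\leq \exp(-c\log^{5/4}L_{n})$; the exponent $5/4$ is what I expect to emerge from optimizing $\epsilon_{n}$ against the combinatorial factor $L_{n}^{2}$ and the sprinkling error as they compound over the iterations. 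The theorem then follows by decomposing $\{r_{t}\geq v_{+}t+L \text{ for some }t\geq 0\}$ into a union of scale-$n$ bad events with $L_{n}\gtrsim L$ and summing.

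The step I expect to be most delicate is the cascading geometric claim. Because the infection front is not a monotone function of the environment and its causal chain is a random zig-zag through space-time, forcing two spatially well-separated bad sub-boxes to lie inside a bad super-box, at a distance large enough for the decoupling to apply efficiently, will likely require working with a slightly strengthened version of $A_{n}$ that includes a buffer region at the spatial boundary. Beyond that, propagating the recursion and tuning the sprinkling sequence should be standard, if careful, computations once the decoupling inequality is in hand.
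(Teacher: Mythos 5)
Your general architecture (multiscale renormalization, base estimate from the jump-rate bound $g(k)\le\Gamma_+ k$, cascading via pigeonhole, recursion yielding the $\log^{5/4}$ exponent, and summing over scales $\ge L$) is essentially that of the paper. However, two specific choices you made are not just delicate but actually break the argument as stated, and you should be aware of them.

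\textbf{Spatial versus temporal separation.} You plan to force $A_{n+1}$ to produce \emph{spatially} disjoint bad sub-boxes and then apply the decoupling to them. But the only decoupling available at the time Theorem~\ref{teo:finite_velocity} is proved is the vertical one, Theorem~\ref{teo:vertical_decoupling}, which requires the two supports to be separated in \emph{time} ($\dist_V\ge C$) and says nothing about boxes at the same time slice. The horizontal decoupling you would need for spatially separated boxes is Proposition~\ref{prop:horizontal_decoupling}, and it is derived \emph{from} Theorem~\ref{teo:finite_velocity}; invoking it here would be circular. The paper's cascading step is engineered precisely to produce several sub-boxes at \emph{distinct times} $s_i\ne s_j$, and the decoupling is then applied to pairs of events separated by at least one sub-box in time. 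You would need to reorganize the pigeonhole accordingly.

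\textbf{Direction of the velocity sequence.} You take $v_n\searrow v_+$ decreasing, with the slack $v_n-v_{n+1}>0$. But for the cascading to give a contradiction you need the velocity threshold at the smaller scale to be at or \emph{below} the one at the larger scale, i.e.\ $v_n\le v_{n+1}$. If the infection crosses the big box at average speed $>v_{n+1}$ while every sub-box it touches has speed $\le v_n$, the total displacement is at most $v_n$ times the total time, which is a contradiction with the big-box speed only if $v_n<v_{n+1}$. With your decreasing choice the bound $v_n\cdot(\text{time})$ already exceeds $v_{n+1}\cdot(\text{time})$, so no contradiction results, and the cascading simply fails. The paper uses the increasing sequence $v_{k+1}=v_k+(k+1)^{-2}$ and then outputs $v_+\approx 2v_\infty+1$ at the end.

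There is also a quantitative mismatch: with $L_{n+1}=2L_n$ and boxes of temporal height $L_n/v_n$, you can stack at most two sub-boxes vertically inside the scale-$(n+1)$ box, and they are adjacent, so $\dist_V=0$ and the vertical decoupling cannot even be applied. The paper's choice $L_{k+1}=L_k^3$ produces $\sim L_k^2$ time slices of scale-$k$ boxes inside a scale-$(k+1)$ box, leaving ample room to select seven bad sub-boxes at pairwise time-separated locations and to tolerate the few exceptional ones via the event $D_k$. This is why the paper's recursion involves seven sub-events, a fourth power $p_k^4$, and the prefactor $L_{k+1}^{28}$, rather than two sub-events and a square.

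Finally, a smaller omission: after the renormalization controls $r$ at the deterministic times $L_k$, a genuine interpolation step is needed to pass to all $t\ge 0$ (using, e.g., bounds like Lemma~\ref{lemma:infection_2} on intra-interval backtracks), and one also needs the preliminary conditioning step on $\{r_0=0\}$ that sums over the starting position. Your last sentence gestures at this but the details carry some weight in the paper's proof.
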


\par Our second result says that $r_{t}$ travels to the right with positive velocity.
\begin{teo}\label{teo:positive_velocity}
For any $\rho>0$, there exist $v_{-}=v_{-}(\rho)>0$ such that, for all $L>0$,
\begin{equation}\label{eq:positive_speed}
\PP_{\rho}\left[\begin{array}{cl}
r_{t} \leq v_{-} t-L, \\
\text{for some } t \geq 0
\end{array}\right] \leq \useconstant{c:positive_speed}e^{-\useconstant{c:positive_speed}^{-1}\log^{\sfrac{5}{4}}L},
\end{equation}
for some positive constant $\useconstant{c:positive_speed}$ that depends only on the density $\rho$ and the rate function $g$.
\end{teo}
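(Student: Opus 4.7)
The plan is to mirror the multi-scale renormalization scheme used for Theorem~\ref{teo:finite_velocity}, substituting a different base-case input. Fix a geometric sequence of scales $L_{k+1}=\lfloor L_k^{1+\alpha}\rfloor$ for some small $\alpha>0$ and, for each space-time point $(x,t)\in\ZZ\times[0,\infty)$, define a ``bad'' event $B_k(x,t)$, measurable with respect to the zero range trajectory in the space-time box $[x-L_k,x+L_k]\times[t,t+L_k/v_-]$, that roughly captures the assertion: ``an infection wave entering the box from the left fails to cross it at speed $v_-$.'' Set $p_k=\sup_{(x,t)}\PP_\rho[B_k(x,t)]$. The goal is to show $p_k\leq\exp(-\log^{\sfrac{5}{4}} L_k)$ for all $k$ large, from which \eqref{eq:positive_speed} follows by a union bound over boxes at scale $L_k$ with $L_k\asymp L$.

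For the base case, I would argue that, for a suitably small choice of $v_-$ and a suitably large $L_0$, $p_0$ can be made smaller than any prescribed threshold. Uniform ellipticity from \eqref{eq:g_condition} is essential here: whenever the site of the rightmost infected particle is occupied, that particle (or its pile) jumps right at rate at least $\Gamma_-$; and since $\eta_t$ remains in equilibrium, the first occupied site to the right of $r_t$ is within distance $O(1)$ with high probability. Combining these via a direct coupling, the front advances at a positive mean rate, and a deviation estimate over a time window of length $\asymp L_0$ yields the base case.

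For the inductive step, observe that the occurrence of $B_{k+1}(x,t)$ forces the simultaneous occurrence of two sub-events $B_k(x_1,t_1)$ and $B_k(x_2,t_2)$ at the preceding scale, where $(x_1,t_1)$ and $(x_2,t_2)$ can be chosen so that the supports of these two events are separated in space-time by a distance $d_k\asymp L_k^{1+\sfrac{\alpha}{2}}$. Applying the decoupling inequality (the paper's main technical tool) to these two events, together with a union bound over the $O(L_{k+1}^C)$ admissible pairs $(x_1,t_1),(x_2,t_2)$, yields a recursion of the form
\begin{equation*}
p_{k+1} \leq C\, L_{k+1}^{C}\, p_k^2 + \epsilon_k,
\end{equation*}
where $\epsilon_k$ is the sprinkling error from the decoupling and decays stretched-exponentially in $L_k$. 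Tuning $\alpha$ and $L_0$, one verifies by induction that $p_k\leq\exp(-\log^{\sfrac{5}{4}} L_k)$, closing the recursion.

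The main obstacle is the base case. For Theorem~\ref{teo:finite_velocity} a crude tail bound on the number of particles that can reach a window in a given time suffices; here, however, we need \emph{genuinely positive} propagation. The difficulty is that healthy particles may outrun the infection, leaving the rightmost infected pile momentarily unsupported, and we must rule out that this persists. The remedy is a coupling at the base scale that compares the motion of the rightmost infected pile with an auxiliary process jumping right at rate $\Gamma_-$ whenever its site is occupied, combined with the explicit product form of the invariant measure to ensure that the site to the immediate right is occupied a positive fraction of the time. A secondary technical point is calibrating the gap $d_k$ between the two small-scale bad events so that the sprinkling error $\epsilon_k$ remains summable while the quadratic recursion $p_{k+1}\lesssim p_k^2$ still closes.
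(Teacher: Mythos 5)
Your proposed scheme—renormalize the event ``the front fails to cross the box at speed $v_-$'' and close the recursion via the decoupling—is not the route the paper takes, and the paper explicitly explains why: the base case for such a renormalization would require exactly the statement you are trying to prove. Indeed the authors open Section~\ref{sec:positive_vel} by noting that ``when proving an analogous of Lemma~\ref{lemma:renorm_3}, it is necessary to understand more refined properties of the process in order to prove that, at a fixed large time, the wave front indeed travels to the right with large probability.'' For Theorem~\ref{teo:finite_velocity} the base case is handled by a crude Poisson tail (the front cannot outrun the clocks); there is no analogous crude estimate that forces it to advance.

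The specific flaw in your base-case argument is the claim that ellipticity plus equilibrium gives the front a positive mean drift. Look at the compensator in the martingale~\eqref{eq:martingale}: the drift of $r_t$ is $\frac{1}{2}g(\eta_t(r_t))\charf{\{\eta_t(r_t)\geq 2\}}$. When $\eta_t(r_t)=1$, the single infected particle at the front jumps right at rate $g(1)/2$ (front $+1$) and left at rate $g(1)/2$ (front $-1$, since the jumper becomes the new front), so $r_t$ evolves as a driftless martingale during such intervals. The positive drift comes \emph{only} from times at which $\eta_t(r_t)\geq 2$ (left jumps of the top particle no longer move the front, but right jumps still do). The assertion that ``the first occupied site to the right of $r_t$ is within distance $O(1)$'' does not by itself produce a drift—you would have to show that the front actually accumulates $\geq 2$ particles a positive fraction of the time, and that is the entire content of the theorem. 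Your ``direct coupling with an auxiliary process jumping right at rate $\Gamma_-$ whenever its site is occupied'' compares $r_t$ to a process that stochastically \emph{dominates} it from above, not below, so it yields nothing.

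What the paper does instead is renormalize a completely different, more elementary event: for every ``$\eta$-allowed'' path $\gamma$ inside a box, the fraction of time $V_\eta^{R,L_k}(\gamma)$ during which at least two particles lie within distance $R$ of $\gamma$ is at least $\epsilon_k$ (event $F_k^R$ in~\eqref{eq:renorm_2_event}). For this event the base case is trivial: as $\epsilon\to0$ it reduces to the event that some wide window contains at most one particle, which is exponentially unlikely in equilibrium (Lemma~\ref{lemma:renorm_2.4}). The event depends jointly on a lower process $\eta'$ (which paths are allowed; non-decreasing) and an upper process $\eta$ (how many particles are nearby; non-increasing), which is why the paper must develop the simultaneous decoupling, Propositions~\ref{prop:simultaneous_decoupling} and~\ref{prop:simultaneous_coupling}—a point absent from your plan. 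After the renormalization, a separate martingale step (Proposition~\ref{prop:drift} plus Proposition~\ref{prop:martingale_large_deviation}) converts ``$\geq 2$ particles are near the front a positive fraction of the time'' into ``$\geq 2$ particles are \emph{at} the front a positive fraction of the time,'' which then produces the drift of $r_{L_k}$; this step has no analogue in your proposal and is essential. The final interpolation from the times $L_k$ to all $t$ does parallel Theorem~\ref{teo:finite_velocity}, and that part of your plan is consistent with the paper.
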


\par The dependencies introduced by the zero range process require us to introduce new techniques as we cannot find simple renewal structures for the evolution of the infection front. An interesting open problem is proving that $v_{-}=v_{+}$ and thus deduce a law of large numbers for the infection front.

\par The process $r_{t}$ increases by one whenever an infected particle at position $r_{t}$ jumps to $r_{t}+1$. However, in order for $r_{t}$ to decrease, it is necessary that all infected particles at $r_{t}$ jump to $r_{t}-1$. This suggests that the process $r_{t}$ should have a tendency to go to the right. Turning this heuristics into a proof may seem easy at first sight. An indicative that this is not the case is the collection of works Ram{\'i}rez and Sidoravicius~\cite{rs}, Comets, Quastel and Ram{\'i}rez~\cite{cqr}, and B{\'e}rard and Ram{\'i}rez~\cite{br1} where a similar model is considered. There, healthy particles remain still until they become infected. Besides, infected particles move independently from each other. This independence assumption is of central importance, since it enables the introduction of well-behaved renewal structures and these techniques cannot be easily adapted for dependent systems, such as the zero range process. These works establish a law of large numbers, central limit theorem and large deviations for their models. Some of them also consider convergence of the environment as seen from the front, see~\cite{br1}.

% Applications
\par Our theorem is a first step in understanding how influences spread in the zero range process: as a corollary, we obtain a correlation estimate for functions that depend only on sets that are far enough in space, see Proposition~\ref{prop:horizontal_decoupling}.

\bigskip

% Proof
\paragraph{Proof Overview.} First, we prove that $r_{t}$ travels to the right with finite velocity. We use multiscale renormalisation to bound the probability of events where $r_{t}$ travels fast to the right at some fixed times. When we have a good bound for this fixed sequence of times, all the work remaining is to do an interpolation argument to conclude that the statement holds uniformly in time.

\par The proof of the second theorem is also based in multiscale renormalisation. However, we cannot apply the same argument using events where the front does not travel with some small but positive speed, since this would require a better understanding of more refined properties of the model. Instead, we use an alternative strategy inspired in Kesten and Sidoravicius~\cite{ks} considering a broad class of paths and prove that, for each of them, there is a positive fraction of time where at least two particles are close to the path. We observe that the front wave is one such path and, when two particles are close to it, there is a positive chance that these particles will meet in the front and produce a drift to the right. A central step in both proofs is the decoupling for the zero range process.

\bigskip

% Decoupling
\paragraph{Decoupling.} A decoupling is an estimate on the dependence decay of functions whose supports are sufficiently far away and such results 
are interesting on their own. In the last few years, these types of estimates have proven to be a powerful tool when studying models that lack good mixing properties, see~\cite{baldasso_teixeira}, by the same authors, Benjamini and Stauffer~\cite{bs}, Hil\'{a}rio, den Hollander, Sirodavicius, Soares dos Santos and Teixeira~\cite{rwrw}, and Sidoravicius and Stauffer~\cite{ss}. Here, we prove a decoupling for the zero range process considering functions of the space-time that are far away in time.

% Functions
\par We say that a function of the trajectories $f$ has support in a space-time box $B \subset \ZZ \times \RR_{+}$ if, for every pair of trajectories $\eta$ and $\bar{\eta}$,
\begin{equation}\label{eq:support_decoupling}
\eta_{t}(x)=\bar{\eta}_{t}(x) \text{ for all } (x,t) \in B \text{ implies } f(\eta)=f(\bar{\eta}).
\end{equation}
The partial order in space-time trajectories ($\eta \preceq \bar{\eta}$, if $\eta_{t}(x) \leq \bar{\eta}_{t}(x)$, for all $(x, t) \in \ZZ \times \RR_{+}$) allows us to say that a function $f: \NN_{0}^{\ZZ \times \RR_{+}} \to \RR$ is non-decreasing if
\begin{equation}\label{eq:non_decreasing}
\eta \preceq \bar{\eta} \text{ implies } f(\eta) \leq f(\bar{\eta}).
\end{equation}
Given two space-time boxes $B_{1}, B_{2} \subset \ZZ \times \RR_{+}$, the vertical distance between them is
\begin{equation}\label{eq:vertical_distance}
\dist_{V}=\inf \{|t-s|:(x,t) \in B_{1} \text{ and } (y,s) \in B_{2}\}.
\end{equation}

%%%%%
\newbigconstant{C:vertical_decoupling}
\newconstant{c:vertical_decoupling}
%%%%%

% Decoupling zero range process
\par We now can state our decoupling, the main tool in the proof of Theorems~\ref{teo:finite_velocity} and~\ref{teo:positive_velocity}.
\begin{teo}\label{teo:vertical_decoupling}
Fix $\rho_{+}>0$. There exist positive constants $\usebigconstant{C:vertical_decoupling}=\usebigconstant{C:vertical_decoupling}(\rho_{+})$ and $\useconstant{c:vertical_decoupling}=\useconstant{c:vertical_decoupling}(\rho_{+})$ such that, for any two square boxes $B_{1}$ and $B_{2}$ of side-length $s >0$ satisfying
\begin{equation}
\dist_{V}=\dist_{V}(B_{1}, B_{2}) \geq \usebigconstant{C:vertical_decoupling},
\end{equation}
and any two non-decreasing functions of the space-time configurations $f_{1},f_{2}:\NN_{0}^{\ZZ \times \RR_{+}} \to [0,1]$ with respective supports in $B_{1}$ and $B_{2}$, we have, for any $\rho \in [0,\rho_{+}]$ and $\epsilon \in (0,1]$,
\begin{equation}\label{eq:decoupling_estimate}
\EE_{\rho}[f_{1}f_{2}] \leq \EE_{\rho+\epsilon}[f_{1}]\EE_{\rho+\epsilon}[f_{2}]+\useconstant{c:vertical_decoupling} \dist_{V}(\dist_{V}+s+1)e^{-\useconstant{c:vertical_decoupling}^{-1}\epsilon^{2}\dist_{V}^{\sfrac{1}{4}}}.
\end{equation}
\end{teo}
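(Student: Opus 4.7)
Write $B_i \subset \ZZ \times [t_i, t_i+s]$ so that by hypothesis $\dist_V = t_2-(t_1+s)$, and set $\tau := t_1 + s + \dist_V/2$. Assumption~\eqref{eq:g_condition} makes the zero range process attractive, so applying the Markov property at time $\tau$ gives
\begin{equation*}
\EE_\rho[f_1 f_2] \;=\; \EE_\rho\!\bigl[f_1 \,\Psi(\eta_\tau)\bigr], \qquad \Psi(\sigma) := \EE\!\bigl[f_2 \,\big|\, \eta_\tau = \sigma\bigr],
\end{equation*}
where $\Psi:\NN_0^\ZZ \to [0,1]$ is non-decreasing. The central idea is to replace $\eta_\tau$ inside $\Psi(\eta_\tau)$ by an \emph{independent} equilibrium sample of density $\rho+\epsilon$. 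Once this replacement is carried out, the independence produces the product $\EE_\rho[f_1]\,\EE_{\rho+\epsilon}[f_2]$, and the standard monotone coupling $\nu_\rho \preceq \nu_{\rho+\epsilon}$ combined with the monotonicity of $f_1$ upgrades the first factor to $\EE_{\rho+\epsilon}[f_1]$, matching the right-hand side of~\eqref{eq:decoupling_estimate}.

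The replacement rests on two auxiliary statements that I would prove separately. A \emph{localization} step produces a spatial window $W$ of radius $R$ (some moderate power of $\dist_V+s$) centred near the spatial support of $B_2$, such that $\Psi(\sigma)$ depends, up to an error negligible on the scale of~\eqref{eq:decoupling_estimate}, only on $\sigma\vert_W$. This is based on the upper bound $g(k)-g(k-1)\le\Gamma_+$ in~\eqref{eq:g_condition}, which forces the jump intensity of any individual particle to be dominated by that of a continuous-time random walk of rate $\Gamma_+$, so that a Chernoff estimate at the diffusive scale $\sqrt{\dist_V+s}$ controls the probability that a particle outside $W$ at time $\tau$ ever reaches the spatial projection of $B_2$ before time $t_2+s$. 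A \emph{sprinkled dominating coupling} step then builds, on an enlarged probability space carrying $\eta$ together with an independent zero range process $\tilde\eta$ of density $\rho+\epsilon$, a good event $G$ whose complement has probability at most the error term in~\eqref{eq:decoupling_estimate}, on which $\eta_\tau(x)\le\tilde\eta_\tau(x)$ for every $x\in W$. Given both ingredients, the monotonicity of $\Psi$ applied to the $W$-restriction yields $\Psi(\eta_\tau)\le\Psi(\tilde\eta_\tau)$ on $G$ (up to the localization error), the trivial bound $\Psi\le 1$ takes care of $G^c$, and independence of $\eta$ and $\tilde\eta$ factorises the main term.

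The main obstacle is the sprinkled coupling step, and it is this step that forces the fractional exponent $\dist_V^{\sfrac{1}{4}}$. A direct attempt is doomed from both sides: using the basic monotone coupling between $\eta$ and $\tilde\eta$ destroys the independence one needs in the end, while independent samples $\eta_\tau\sim\nu_\rho$ and $\tilde\eta_\tau\sim\nu_{\rho+\epsilon}$ satisfy $\eta_\tau\preceq\tilde\eta_\tau$ on $W$ only with probability exponentially small in $|W|$; moreover $\eta_\tau$ is correlated with $f_1$ through the past, so no single-shot Gaussian comparison can be applied on the joint law. My plan is to resolve this by a multi-scale sprinkling scheme over the interval $[t_1+s,\tau]$: partition it into $N$ sub-intervals, at each boundary couple $\eta$ and $\tilde\eta$ via the basic monotone coupling while adding a density increment $\epsilon/N$ to $\tilde\eta$, and between such ``pinch'' steps evolve the two processes independently. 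A concentration estimate (Poisson-type tails for the added mass against the local occupation fluctuations of $\eta$ over a diffusive patch) bounds the failure probability at each step by $\exp(-c(\epsilon/N)^2 \cdot (\dist_V/N)^{\sfrac{1}{2}})$ up to polynomial corrections, and optimising $N$ of order $\dist_V^{\sfrac{1}{4}}$ balances the union bound against the budget, producing the exponent $\epsilon^2\dist_V^{\sfrac{1}{4}}$. The polynomial prefactor $\dist_V(\dist_V+s+1)$ then arises from a union bound over sites of $W$ and over the sub-intervals of the scheme. Combining the localization lemma, the coupling lemma, and the Markov/independence argument above yields~\eqref{eq:decoupling_estimate}.
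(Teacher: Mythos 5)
Your high-level skeleton — Markov property at an intermediate time, localize the relevant information to a spatial window, dominate the configuration there by an independent equilibrium sample at density $\rho+\epsilon$ via a sprinkled coupling, then factorize — is the same as the paper's (which packages the coupling in Proposition~\ref{prop:coupling} and the localization in the event $E$). However, the key coupling step as you describe it has two genuine gaps.

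First, independence. You explicitly need the auxiliary process $\tilde\eta$ to be independent of $(\eta_s)_{s\le t_1+s}$, since this is what lets you write $\EE[f_1(\eta)\,\Psi(\tilde\eta_\tau)] = \EE[f_1(\eta)]\,\EE[\Psi(\tilde\eta_\tau)]$. But your plan ``at each boundary couple $\eta$ and $\tilde\eta$ via the basic monotone coupling'' ties $\tilde\eta$ to $\eta$ at each pinch time and hence to $\eta$'s past. You acknowledge this tension one paragraph earlier, but the stated remedy (adding $\epsilon/N$ density at each pinch) does nothing to restore the independence. The paper's construction is specifically engineered to sidestep this: $\bar\eta$ starts from a fresh independent sample and is driven throughout by its own graphical construction $\mathscr{P}_1$, while it is the \emph{lower}-density process $\eta$ that switches clocks — particles of $\eta$ that have not yet met their matched partner use a second, independent construction $\mathscr{P}_2$, and after meeting they switch to $\mathscr{P}_1$ to stick to their partner. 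The matching is remade at the pinch times, but no density is added and $\bar\eta$ is never conditioned on $\eta$. This sticky-matching idea is essential: without it, each pinch forgets the coupling success from the previous ones, so iterating does not improve the failure probability.

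Second, the exponent. With $N=\dist_V^{1/4}$, your per-step bound $\exp\bigl(-c(\epsilon/N)^2(\dist_V/N)^{1/2}\bigr)$ evaluates to $\exp\bigl(-c\,\epsilon^2\dist_V^{-1/2}\cdot\dist_V^{3/8}\bigr)=\exp\bigl(-c\,\epsilon^2\dist_V^{-1/8}\bigr)$, which tends to $1$ as $\dist_V\to\infty$; a union bound over $N$ steps then gives no decay at all, so the optimization you describe does not produce $\epsilon^2\dist_V^{1/4}$. In the paper the $\dist_V^{1/4}$ enters quite differently: the spatial blocks used for matching have width $L=\lfloor t^{1/4}\rfloor$, the full sprinkling $\epsilon$ is used at every matching attempt (not split into $\epsilon/N$), and the $\exp(-c\,\epsilon^2 t^{1/4})$ term is the concentration error from Claim~\ref{claim:concentration} applied at each of the $\sim t^{1/4}$ matching times, while the random-walk meeting estimate contributes the even smaller $\exp(-\tfrac{1}{32}t^{1/4}\log t)$. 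So the $\epsilon^2 t^{1/4}$ arises as (sprinkling)$^2\times$(block width), not as a per-step budget divided by $N$. To salvage your outline you would need to replace the incremental-density pinching by the paper's match-and-restart scheme, keep $\tilde\eta$ autonomous, and track the block width, not the time between pinches, as the quantity multiplying $\epsilon^2$.
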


\begin{remark}
One can also take $f_{1}$ and $f_{2}$ non-increasing and assume that $\epsilon \in [-1,0)$. The proof carries out in the same way in this case.
\end{remark}

\begin{remark}
Observe that \eqref{eq:decoupling_estimate} is not a correlation estimate, since we need to add a sprinkling in order to have this bound on the error function. A question that rises naturally from the theorem above is if it is possible to take $\epsilon=0$, and avoid using sprinkling. In \cite{rwrw}, the authors consider a particle system composed by independent random walks evolving in discrete time. The continuous version of their model corresponds to a zero range process with rate funciton $g(n)=n$. They prove that the correlations do not decay as fast as the bound given in our theorem. In fact, Equation (2.11) from \cite{rwrw} provides an example where the correlations decay as $\dist_{V}^{-\sfrac{1}{2}}$.
\end{remark}

% Applications
\par We believe this theorem may have many applications. It should help to understand processes that evolve on top of the zero range process. The random walk on top of the zero range process is an example of model that our decoupling should help to understand, by generalizing the results in~\cite{rwrw}. The challenge in this is to develop a renewal structure for such walk.

\bigskip

% Idea of the proof
\paragraph{Proof overview.} For the proof of Theorem~\ref{teo:vertical_decoupling}, we couple two zero range processes with densities $\rho < \rho'$ in a way that, with high probability, the process with bigger density dominates the less dense process inside an interval for some fixed large time.

% The coupling
\par Let us describe the coupling. We will match each particle of the process that has smaller density, with a particle of the process with larger density $\rho'$, similarly to the couplings contructed in~\cite{bs} and~\cite{baldasso_teixeira}. This is done in a careful way so that each pair of particles is not far apart at time zero. The evolution of the process is constructed in a way that, when a pair of matched particles meets, they stay together from this time on. This single coupling attempt is not good enough, since the the probability that nearby particles avoid each other does not decay fast enough. To fix this, the matching is remade at some particular times and all the process starts again in order to match more particles.

\bigskip

%%%%%
\newbigconstant{c:horizontal_decoupling_1}
\newbigconstant{c:horizontal_decoupling_2}
\newconstant{c:horizontal_decoupling}
%%%%%

\par Finally, as an application, we also prove a decoupling for the zero range process considering functions of the space-time configurations whose supports are far away in space. Recall the definition of the vertical distance~\eqref{eq:vertical_distance} and consider the horizontal distance between the boxes $B_{1}$ and $B_{2}$
\begin{equation}\label{eq:horizontal_distance}
\dist_{H}=\inf \{|x-y|:(x,t) \in B_{1} \text{ and } (y,s) \in B_{2}\}.
\end{equation}

\begin{prop}\label{prop:horizontal_decoupling}
Fix $0< \rho_{-} \leq \rho_{+}< \infty$. There exist positive constants $\usebigconstant{c:horizontal_decoupling_1}=\usebigconstant{c:horizontal_decoupling_1}(\rho_{-}, \rho_{+})$, $\usebigconstant{c:horizontal_decoupling_2}=\usebigconstant{c:horizontal_decoupling_2}(\rho_{-}, \rho_{+})$ and $\useconstant{c:horizontal_decoupling}=\useconstant{c:horizontal_decoupling}(\rho_{-}, \rho_{+})$ such that, for any two square boxes $B_{1}$ and $B_{2}$ of side-length $s>0$ satisfying
\begin{equation}
\dist_{H} \geq \usebigconstant{c:horizontal_decoupling_1}(s+\dist_{V})+\usebigconstant{c:horizontal_decoupling_2},
\end{equation}
and any two functions of the space-time $f_{1},f_{2}:\NN_{0}^{\ZZ \times \RR_{+}} \to [0,1]$ with respective supports in $B_{1}$ and $B_{2}$, we have, for any $\rho \in [\rho_{-},\rho_{+}]$,
\begin{equation}\label{eq:decoupling_estimate_3}
\EE_{\rho}[f_{1}f_{2}] \leq \EE_{\rho}[f_{1}]\EE_{\rho}[f_{2}]+\useconstant{c:vertical_decoupling}e^{-\useconstant{c:vertical_decoupling}^{-1}\log^{\sfrac{5}{4}} d_{H}}
\end{equation}
\end{prop}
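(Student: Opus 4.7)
My plan is to deduce Proposition~\ref{prop:horizontal_decoupling} from Theorem~\ref{teo:finite_velocity} together with the product structure of the invariant measure (denoted $\mu_\rho$). The guiding intuition is that Theorem~\ref{teo:finite_velocity} controls the speed at which any discrepancy between two coupled ZRPs --- arising from different initial data or from different Poisson clocks --- can propagate; so whenever $B_1$ and $B_2$ are separated horizontally by much more than $v_+T$, with $T$ bounding the maximum time coordinate in either box, the restrictions $\eta|_{B_1}$ and $\eta|_{B_2}$ are essentially determined by disjoint pieces of the randomness driving the ZRP, hence nearly independent.

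Reducing without loss of generality to the case where $B_1$ lies to the left of $B_2$, with spatial projections $[a_1,a_1+s]$ and $[a_2,a_2+s]$ and $a_2-(a_1+s)=d_H$, I would use stationarity of $\mu_\rho$ to translate time so that $B_1\subset\ZZ\times[0,s]$, placing both boxes in $\ZZ\times[0,T]$ for some $T\leq 2s+d_V$. Setting $L_0=d_H/4$, define the past cones
\begin{equation*}
Q_1=\{(y,t)\in\ZZ\times[0,T]:y\leq a_1+s+v_+(T-t)+L_0\},\quad Q_2=\{(y,t)\in\ZZ\times[0,T]:y\geq a_2-v_+(T-t)-L_0\}.
\end{equation*}
Taking $\usebigconstant{c:horizontal_decoupling_1}\geq 16v_+(\rho_+)$ forces $v_+T\leq d_H/8$, hence $Q_1\cap Q_2=\emptyset$. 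Because $\mu_\rho$ is a product measure and the driving Poisson clocks are independent across disjoint space-time regions, the restrictions of the original initial data and clocks to $Q_1$ and to $Q_2$ are independent. I would fill in the complements of $Q_1$ and $Q_2$ with two independent fresh copies of the initial data and clocks to build auxiliary ZRPs $\tilde\eta^1,\tilde\eta^2$, each of initial law $\mu_\rho$; since $\tilde\eta^1$ and $\tilde\eta^2$ depend on disjoint collections of independent random variables, they are independent.

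Let $E$ denote the event that, for $i=1,2$, the discrepancy between $\eta$ and $\tilde\eta^i$ has not propagated into $B_i$ by time $T$. A repeated application of Theorem~\ref{teo:finite_velocity} (together with its left-right reflection) to the three coupled processes, with slack $L_0=d_H/4$, should give
\begin{equation*}
\PP_\rho[E^c]\leq\useconstant{c:vertical_decoupling}\exp\bigl(-\useconstant{c:vertical_decoupling}^{-1}\log^{\sfrac{5}{4}}d_H\bigr)
\end{equation*}
after absorbing constants. On $E$ we have $\eta|_{B_i}=\tilde\eta^i|_{B_i}$ for $i=1,2$; using $f_i\in[0,1]$, the independence of $\tilde\eta^1$ and $\tilde\eta^2$, and $\tilde\eta^i\stackrel{d}{=}\eta$,
\begin{equation*}
\EE_\rho[f_1f_2]\leq\EE\bigl[f_1(\tilde\eta^1)f_2(\tilde\eta^2)\mathbf{1}_E\bigr]+\PP_\rho[E^c]\leq\EE_\rho[f_1]\EE_\rho[f_2]+\PP_\rho[E^c],
\end{equation*}
which yields the claim.

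The main hurdle I anticipate is that Theorem~\ref{teo:finite_velocity} is stated specifically for the infection front defined in the paper, whereas I really need a version controlling the spread of a generic discrepancy between two coupled ZRPs (coming either from different initial data or from different Poisson clocks outside $Q_i$). I expect this to follow from a graphical representation of the ZRP in which sites of disagreement propagate along the very mechanism tracked by the infection, so that the infection front dominates any discrepancy front pathwise; the required ingredients should already be available from the construction used to prove Theorem~\ref{teo:finite_velocity}. The lower bound $\rho\geq\rho_-$ enters only through uniformity of the constants in Theorem~\ref{teo:finite_velocity} over $[\rho_-,\rho_+]$.
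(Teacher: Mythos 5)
Your strategy is the same one the paper uses: deduce horizontal decoupling from the finite-speed theorem plus the fact that the graphical construction is independent over disjoint space-time regions. The paper, however, executes it without auxiliary coupled processes. It picks the anchor points $\lceil d_H/4\rceil$ and $\lceil 3d_H/4\rceil$, runs two infection processes inside the \emph{single} realization $\eta$ (one rightward from $\lceil 3d_H/4\rceil$, one reflected leftward from $\lceil d_H/4\rceil$), and defines the bad events $A,\tilde A$ as ``the infection reaches $B_2$ (resp.\ $B_1$) or backtracks to the center line $y=\lceil d_H/2\rceil$.'' On $A^c\cap\tilde A^c$, the fronts partition $\ZZ\times[0,s]$: everything strictly left of the right-moving front is infected and everything right of it healthy, so particles that ever visit $B_2$ are healthy and their entire trajectories (positions, co-located counts, and hence the clocks they ever consult) live in $(\lceil d_H/2\rceil,\infty)\times[0,s]$; symmetrically for $B_1$. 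This is what makes $f_1,f_2$ functions of disjoint pieces of randomness. Then $\PP[A]$ and $\PP[\tilde A]$ are bounded using Theorem~\ref{teo:finite_velocity} (advance), Lemma~\ref{lemma:infection_3}-type backtracking estimates, and Theorem~\ref{teo:vertical_decoupling}.

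The genuine gap in your proposal is precisely the step you flag: you need that a ``discrepancy'' between $\eta$ and the re-randomized $\tilde\eta^i$ propagates no faster than the infection front, i.e.\ an analogue of Theorem~\ref{teo:finite_velocity} for a coupling-discrepancy process rather than for the infection. That is not automatic for the zero range process, because the jump rate at a site depends on the occupation number, so a single mismatched particle changes future clock interpretations at every site it visits, and these mismatches can go in both directions; a careless formalization of ``discrepancy front'' may not be a nondecreasing/monotone quantity to which the renormalisation of Section~\ref{sec:finite_vel} applies. The paper avoids ever defining or controlling a discrepancy process: the infection of the single realization is itself the tracker, and the needed monotone property (``everything left of $r_t$ is infected, everything right is healthy'') is immediate from the infection dynamics. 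If you want to carry through your cone-and-fresh-randomness construction, you must either prove that the infection front pathwise dominates the set of sites at which $\eta$ and $\tilde\eta^i$ disagree, or restate your events $E^c$ in terms of the infection of $\eta$ directly as the paper does. Until one of those is done, the bound $\PP[E^c]\leq \useconstant{c:vertical_decoupling}\exp(-\useconstant{c:vertical_decoupling}^{-1}\log^{5/4}d_H)$ is unsupported. A minor additional remark: the paper first enlarges the boxes so that both sit in $\ZZ\times[0,s]$, which is why the hypothesis $d_H\geq \usebigconstant{c:horizontal_decoupling_1}(s+d_V)+\usebigconstant{c:horizontal_decoupling_2}$ is phrased with $s+d_V$; your $T\leq 2s+d_V$ and choice of constants is consistent with this, so the geometry of your cones is fine.
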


\bigskip

% History
\paragraph{Related works.} There exists a rich literature concerning infection processes. Giacomelli~\cite{giacomelli} proves that, for our model, in the independent case, i.e., when the rate function $g$ equals the identity, the velocity of the infection wave is greater than one.

\par Jara, Moreno and Ram{\'i}rez~\cite{jmr} consider an infection process evolving on top of the exclusion process. Based on a regeneration argument, they prove a law of large numbers and central limit theorem for this model. 

\par Higher dimensional models have also been considered. Popov~\cite{popov} presents a detailed review of the so-called frog model. An extensive study of this model is conducted in Alves, Machado and Popov~\cite{amp, amp2}, and Alves, Machado, Popov and Ravishankar~\cite{ampr}.

\par In \cite{ks}, a model similar to ours is considered, but for any dimension: particles evolve as independent random walks and only the origin begins infected. In this case, they prove a shape theorem with similar techniques as the ones we use in the proof of Theorem~\ref{teo:positive_velocity}.

\par Decoupling estimates using sprinklings were also introduced in the context of random interlacements by Sznitmann~\cite{s}, and Popov and Teixeira~\cite{pt}. Later on, they were used for studying other types of random processes, such as independent Brownian motions by Peres, Sinclair, Sousi and Stauffer~\cite{psss}, and Stauffer~\cite{stauffer}. More recently, in~\cite{rwrw},~\cite{baldasso_teixeira},~\cite{psss} and~\cite{stauffer}, conservative particle systems where considered. In~\cite{rwrw}, the authors prove a decoupling for systems composed by particles performing independent random walks. Their techniques are similar to ours, but the coupling they obtain is of different nature, using the results of~\cite{pt}. Our techniques are somewhat similar to the ones in~\cite{bs},~\cite{baldasso_teixeira} and~\cite{stauffer}, where a decoupling for the exclusion process is proved. 

\begin{remark}
Contrary to~\cite{ks}, we restrict ourselves to the one-dimensional case, but we believe that similar statements can be made for larger dimensions. For a proof, the renormalisation we present here should apply, however one needs to be more careful with the decoupling. In larger dimensions, when the coupling used to prove Theorem~\ref{teo:vertical_decoupling} is constructed, the probability that two particles never meet does not converge to zero. Nevertheless, we believe this can be solved by using more careful estimates of the Green function. We leave this as an interesting direction of further investigation, but refrain to pursuing it here due to size constrains.
\end{remark}

\bigskip

\textbf{Structure of the paper.} Section~\ref{sec:zrp} is devoted to the precise definition of the zero range process and presents its graphical construction. The proof of decoupling for the zero range process can be found in Section~\ref{sec:vertical_decoupling}. In Section~\ref{sec:infection}, we introduce the infection process and prove some lemmas about it. The proof of Theorem~\ref{teo:finite_velocity} can be found in Section~\ref{sec:finite_vel}. Finally, Section~\ref{sec:positive_vel} contains the proof of Theorem~\ref{teo:positive_velocity}.

\bigskip

\textbf{Acknowledgements.} The authors thank Milton Jara for valuable discussions on the initial stages of the work. RB thanks  FAPERJ grant E-26/202.231/2015 for financial support and IMPA for the hospitality during the development of this work. AT thanks CNPq grants 306348/2012-8 and 478577/2012-5 and FAPERJ grant 202.231/2015 for financial support.

\section{The zero range process}\label{sec:zrp}
~

\par In this section we define and recall some properties of the zero range process.

\par The zero range process in $\ZZ$ is a system where particles interact only when they are at the same site. This interaction alters the jump rate of a particle according to the number of particles that share the site.

\par Fix a non-negative function $g:\NN_{0} \to \RR_{+}$ with $g(0)=0$ and a translation invariant transition probability $p(\cdot,\cdot)$ on $\ZZ$. The zero range process with rate function $g$, transition probability $p$ and initial state $\eta_{0} \in \ZZ^{\NN_{0}}$ is the particle system on $\NN_{0}^{\ZZ}$ with infinitesimal generator given by
\begin{displaymath}
Lf(\eta)=\sum_{x \in \ZZ} g(\eta(x)) \sum_{y \in \ZZ} p(x,y)\Big[f(\eta^{x,y})-f(\eta)\Big],
\end{displaymath}
where $\eta^{x,y}$ is the configuration obtained from $\eta$ by taking one particle from site $x$ and placing it at site $y$ and $f$ is any bounded local function. We will soon provide classical conditions for the existence of the process.

\par In this process, particles interact only when they are at the same site. The interaction is given by the function $g$ that controls the jump rate.

\par We are interested in the case where $p$ is the nearest-neighbor symmetric transition probability, $p(0,1)=p(0,-1)=\sfrac{1}{2}$, and $g$ satisfies~\eqref{eq:g_condition}.

\bigskip

\par For $\phi \in \RR_{+}$, consider the product measure with marginals $\nu_{\phi}$ given by
\begin{equation}\label{eq:inv_measure}
\nu_{\phi}(k)=\frac{1}{Z(\phi)}\frac{\phi^{k}}{g(k)!}, \text{ for all } k \in \NN_{0},
\end{equation}
where $g(k)!=g(k)\cdot g(k-1) \cdots g(1)$, $g(0)!=1$ and $Z(\phi)$ is a normalizing constant:
\begin{equation}\label{eq:normalization}
Z(\phi)=\sum_{k=0}^{\infty}\frac{\phi^{k}}{g(k)!}.
\end{equation}
Observe that the lower bound in Assumption~\eqref{eq:g_condition} implies that, for all $\phi \in \RR_{+}$, $Z(\phi)< \infty$ and hence these probability measures are well-defined. These probabilities measures are invariant and compose the collection of invariant measures for the zero range process that we consider. We remark however that these are not a complete set of invariant measures for the zero range process, as proved in~\cite{andjel}.

\begin{remark}
We will use a slight abuse of notation, by denoting the product measure and its marginals by the same symbols.
\end{remark}

\par In general, the parameter $\phi$ is not the density of the process. In fact, for the measure $\nu_{\phi}$, the expected number of particles in each site is given by
\begin{equation}\label{eq:density}
R(\phi)=\frac{1}{Z(\phi)}\sum_{k=0}^{\infty}\frac{k\phi^{k}}{g(k)!} = \phi \frac{Z'(\phi)}{Z(\phi)}.
\end{equation}
The function $R:\RR_{+} \to \RR_{+}$ is an increasing bijection. This implies that we can parametrize the measures in \eqref{eq:inv_measure} by density:
\begin{equation}\label{eq:inv_measure_density}
\mu_{\rho}=\nu_{R^{-1}(\rho)}.
\end{equation}
We refer to Section 2.3 of~\cite{kipnis_landim} for further information about these measures.

\bigskip

\par Theorem 1.4 from~\cite{andjel} implies that the process starting from any measure $\mu_{\rho}$ exists with probability one. Besides, the stationary process is reversible, which allows us to define it for all real times, and obtain $(\eta_{s})_{s \in \RR}$.

\bigskip

\par In order to prove our decoupling, an important ingredient is concentration of the invariant measures. This is the content of the next proposition.

\begin{prop}\label{prop:concentration}
Let $(X_{k})_{k=1}^{n}$ be a collection of independent random variables with distribution $\mu_{\rho}$. For any $\epsilon \in (0,1]$,
\begin{equation}\label{eq:concentration_upper_bound}
\PP_{\rho}\left[\sum_{k=1}^{n}X_{k} \geq (\rho+\epsilon)n\right] \leq e^{-c(\rho)\epsilon^{2}n},
\end{equation}
and
\begin{equation}\label{eq:concentration_lower_bound}
\PP_{\rho}\left[\sum_{k=1}^{n}X_{k} \leq (\rho-\epsilon)n\right] \leq e^{-c(\rho)\epsilon^{2}n},
\end{equation}
where $c(\rho)$ is a constant that depends on $\rho$ and is uniformly bounded on compact intervals of $[0, \infty)$.
\end{prop}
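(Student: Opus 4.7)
The natural route is the classical Cram\'er--Chernoff method. The first thing to do is to verify that the marginal $\mu_{\rho}$ has an everywhere finite moment generating function. The hypothesis~\eqref{eq:g_condition} telescopes to $g(k) \geq \Gamma_{-}k$, hence $g(k)! \geq \Gamma_{-}^{k}k!$, so
\[
Z(\phi) \leq \sum_{k=0}^{\infty}\frac{\phi^{k}}{\Gamma_{-}^{k}k!} = \exp(\phi/\Gamma_{-}) < \infty,
\]
for every $\phi \geq 0$. Setting $\phi = R^{-1}(\rho)$ and reorganising the sum, one obtains the closed form
\[
\EE_{\rho}\!\left[e^{\lambda X_{1}}\right] = \sum_{k=0}^{\infty}\frac{(\phi e^{\lambda})^{k}}{g(k)!\,Z(\phi)} = \frac{Z(\phi e^{\lambda})}{Z(\phi)},
\]
which is valid and smooth for every $\lambda \in \RR$ and every $\rho \geq 0$.

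Next I would examine the centred log-MGF
\[
\Lambda_{\rho}(\lambda) = \log Z(\phi e^{\lambda}) - \log Z(\phi) - \lambda \rho.
\]
Differentiating and using the identity $R(\phi) = \phi Z'(\phi)/Z(\phi) = \rho$, one gets $\Lambda_{\rho}(0) = \Lambda'_{\rho}(0) = 0$, while $\Lambda''_{\rho}(\lambda)$ is a continuous function of $(\lambda, \rho)$ (the series defining it and its termwise derivatives converge locally uniformly thanks to the exponential domination above). On the compact set $\{|\lambda| \leq 1,\ \rho \in [0,\rho_{+}]\}$ it is therefore bounded above by some $M = M(\rho_{+}) < \infty$, which we may assume is at least $1$. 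Taylor's theorem with integral remainder yields
\[
\Lambda_{\rho}(\lambda) \leq \tfrac{M}{2}\lambda^{2}, \qquad |\lambda| \leq 1,\ \rho \in [0,\rho_{+}].
\]

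The bound~\eqref{eq:concentration_upper_bound} is now a direct application of Chernoff's inequality: for $\lambda \in (0,1]$, by independence,
\[
\PP_{\rho}\!\left[\sum_{k=1}^{n}X_{k} \geq (\rho+\epsilon)n\right] \leq e^{-\lambda\epsilon n}\, \EE_{\rho}\!\left[e^{\lambda(X_{1}-\rho)}\right]^{n} \leq \exp\!\left(-\lambda\epsilon n + \tfrac{M}{2}\lambda^{2}n\right).
\]
Since $\epsilon \in (0,1]$ and $M \geq 1$, the choice $\lambda = \epsilon/M \in (0,1]$ is admissible and gives~\eqref{eq:concentration_upper_bound} with $c(\rho) = 1/(2M(\rho_{+}))$. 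The lower tail~\eqref{eq:concentration_lower_bound} is obtained in the same way by running the argument with $\lambda \in [-1,0)$. The local uniformity statement is built into the argument, as the constant $M(\rho_{+})$ only grows with $\rho_{+}$. There is no real obstacle here beyond bookkeeping: the only point that needs attention is the uniform local boundedness of $\Lambda''_{\rho}$, and this reduces precisely to the super-exponential decay of the marginals forced by the lower bound in~\eqref{eq:g_condition}.
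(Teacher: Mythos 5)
Your argument is correct and follows essentially the same route as the paper: both are Cram\'er--Chernoff estimates that exploit the explicit MGF formula $\EE_{\rho}[e^{\lambda X_1}] = Z(\phi e^{\lambda})/Z(\phi)$ and then control the centred log-MGF near $\lambda = 0$ uniformly over $\rho\in[0,\rho_+]$, choosing $\lambda\propto\epsilon$. The only cosmetic difference is that you bound $\Lambda_{\rho}''$ on a compact $(\lambda,\rho)$-region and invoke Taylor's theorem, whereas the paper controls the sign of the first derivative of $f(\lambda)=e^{\Lambda_{\rho}(\lambda)-\lambda\epsilon/2}$ via Lipschitz continuity of $R$; these are two packagings of the same local quadratic control.
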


\par We defer the proof of this proposition to the Appendix.

%%%%%
\newconstant{c:large_deviation}
%%%%%

\par For future reference, we introduce a constant $\useconstant{c:large_deviation}>0$ satisfying
\begin{equation}
\frac{Z(eR^{-1}(\rho))}{Z(R^{-1}(\rho))}e^{-\useconstant{c:large_deviation}\rho} \leq 1, \,\, \text{ for all } \rho \in [0,\rho_{+}].
\end{equation}
This choice of constant gives that, for all $\rho \in [0, \rho_{+}]$,
\begin{equation}\label{eq:large_deviation}
\PP_{\rho}\left[\sum_{k=1}^{n}X_{k} \geq \useconstant{c:large_deviation}\rho n+t\right] \leq \left[\frac{Z(eR^{-1}(\rho))}{Z(R^{-1}(\rho))}e^{-\useconstant{c:large_deviation}\rho}\right]^{n}e^{-t} \leq e^{-t}.
\end{equation}

\subsection{A graphical construction for the zero range process}\label{subsec:graphical}
~
\par This subsection is devoted to a graphical construction for the zero range process. This construction will be used in the coupling presented in Subsection~\ref{subsec:coupling}.

\par In this construction of the process, every site $x \in \ZZ$ has an associated Poisson point process $\mathcal{P}(x)$ that will control the jumps from $x$. The points of the process have the form $(t,n,u,h)$, where $t$ describes the time of a jump, $n$ describes the height of the particle that is moved, $u$ is an uniformly distributed auxiliary random variable that will help in controlling the jump rate, and $h$ is the direction of the jump. Each Poisson point process takes values in $\RR_{+} \times \NN \times [0,1] \times \{-1,+1\}$ and has intensity measure $\Gamma_{+} \lambda \otimes \mu \otimes \lambda \otimes \sfrac{1}{2}(\delta_{-1}+\delta_{+1})$, where $\Gamma_{+}$ is the constant defined in \eqref{eq:g_condition}, $\mu$ is the counting measure and $\lambda$ is the usual Lebesgue measure.

\par The evolution is set in the following way. Suppose that, at some site $x$, we have a point from the Poisson point process of the form $(t, n, u, h)$ and that the configuration, at this time, has at least $n$ particles at $x$. The particle at height $n$ will perform a jump directed according to $h$ if
\begin{equation}\label{eq:jump_condition}
u \leq \frac{g(n)-g(n-1)}{\Gamma_{+}}.
\end{equation}
If the jump is allowed, all particles that are above the selected particle at site $x$ go down one position and the particle that jumps lands at the top of its next pile. Whenever \eqref{eq:jump_condition} does not hold or the pile contains less then $n$ particles, the jump is simply suppressed.

\par This construction allows us to bound the probability that a site has many particles at some time in $[0,t]$, as stated in the next lemma.

%%%%%
\newconstant{c:many_particles}
%%%%%

\begin{lemma}\label{lemma:many_particles}
Given $\rho_{+}>0$, there exists $\useconstant{c:many_particles}=\useconstant{c:many_particles}(\rho_{+})>0$ such that, for all $\rho \in [0, \rho_{+}]$, if $A(u,t)=(2u+4\Gamma_{+} t)(\rho+1)+1$, then
\begin{equation}\label{eq:many_particles}
\PP_{\rho}\left[\begin{array}{cl}
\eta_{s}(0) \geq A(u, t), \\ \text{for some } s \in [0,t]
\end{array}\right] \leq \useconstant{c:many_particles}(t+1)e^{-\useconstant{c:many_particles}^{-1}u}.
\end{equation}
\end{lemma}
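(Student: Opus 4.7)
The plan is to combine a concentration bound for the initial configuration with a control on how far each individual particle can travel by time $t$, obtained via a coupling with independent random walks.

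I would fix $u, t > 0$ and choose a window $W = [-R, R]$ with $R = R(u, t)$ of the form $C \Gamma_{+} t + u$, where $C$ depends on $\rho_{+}$ and is chosen large enough. The starting point is the crude bound
\begin{equation*}
\max_{s \in [0, t]} \eta_{s}(0) \leq \sum_{x \in W} \eta_{0}(x) + N,
\end{equation*}
where $N$ counts the particles whose initial position lies outside $W$ and that visit the site $0$ at some moment of $[0, t]$. For the first summand I would apply~\eqref{eq:large_deviation} with $n = 2R + 1$ (the parameter $t$ there being replaced by $u$): off an event of probability at most $e^{-u}$,
\begin{equation*}
\sum_{x \in W} \eta_{0}(x) \leq \useconstant{c:large_deviation}\, \rho\, (2R + 1) + u.
\end{equation*}

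The second summand is the crucial one, and I would control it through the graphical construction of Subsection~\ref{subsec:graphical}. The key observation is that any labeled particle sitting at layer $n$ at some site changes position only when a Poisson point $(s, n, u', \pm 1)$ at its current site fires with $u' \leq (g(n) - g(n - 1))/\Gamma_{+}$, an event of total rate $g(n) - g(n - 1) \leq \Gamma_{+}$, uniformly in the surrounding environment. Consequently each labeled particle can be coupled with an independent continuous-time simple random walk of rate $\Gamma_{+}$ whose jump times contain those of the particle, so that a particle initially at $x$ can visit $0$ in $[0, t]$ only if the coupled walk performs at least $|x|$ jumps during that interval. A standard Poisson tail bound then gives $\PP[\mathrm{Poisson}(\Gamma_{+} t) \geq |x|] \leq e^{-|x|}$ whenever $|x|$ exceeds a fixed multiple of $\Gamma_{+} t$, in particular for every $|x| > R$ provided $C$ is large enough. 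Summing over $|x| > R$ and using $\EE_{\rho}[\eta_{0}(x)] = \rho$,
\begin{equation*}
\EE_{\rho}[N] \leq \rho \sum_{|x| > R} \PP[\mathrm{Poisson}(\Gamma_{+} t) \geq |x|] \leq c_{*}(\rho_{+})\, e^{-u},
\end{equation*}
and Markov's inequality forces $N = 0$ off an event of probability at most $c_{*}(\rho_{+})\, e^{-u}$.

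Putting the two estimates together yields $\max_{s \in [0, t]} \eta_{s}(0) \leq \useconstant{c:large_deviation}\, \rho\, (2R + 1) + u$ off an event of probability at most $(1 + c_{*}(\rho_{+}))\, e^{-u}$, and the concrete form $A(u, t) = (2u + 4\Gamma_{+} t)(\rho + 1) + 1$ together with the factor $(t+1)$ tolerated by the statement leaves plenty of room to absorb the remaining multiplicative constants into $\useconstant{c:many_particles}$. The hard part will be the coupling step: one has to fix a consistent labeling of the particles in the graphical construction (for instance, by their initial position and their initial layer), track how each labeled particle's layer evolves as lower particles jump away, and check that the instantaneous rate at which each labeled particle changes position is always bounded above by $\Gamma_{+}$, regardless of the rest of the configuration. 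This is the place where the upper bound in assumption~\eqref{eq:g_condition} is used in an essential way.
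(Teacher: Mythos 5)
Your decomposition is the same one the paper uses: split into ``too many particles initially in a window around the origin'' and ``some far-away particle reaches the origin before time $t$,'' and control the second event by dominating the number of jumps of a labeled particle by a $\poisson(\Gamma_{+}t)$ random variable (which is exactly what the graphical construction gives, since a labeled particle always rings at rate at most $\Gamma_{+}$ regardless of its current height). Your treatment of $N$ via Markov's inequality on $\EE_{\rho}[N]$ — using that the initial configuration is independent of the Poisson clocks and $\EE_{\rho}[\eta_{0}(x)]=\rho$ — is a slight streamlining of the paper's per-site union bound, and it in fact yields a bound without the extra $(t+1)$ prefactor.

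There is, however, one genuine slip, in the treatment of the window sum. You invoke~\eqref{eq:large_deviation} with $n=2R+1$ and the deviation parameter set to $u$, which only produces the threshold $\useconstant{c:large_deviation}\rho\,(2R+1)+u$. But the event in the lemma is $\{\eta_{s}(0)\ge A(u,t)\}$ for the \emph{fixed} threshold $A(u,t)=(2u+4\Gamma_{+}t)(\rho+1)+1$; you cannot absorb a mismatch between the deterministic threshold your estimate controls and the deterministic threshold $A(u,t)$ into the multiplicative constant $\useconstant{c:many_particles}$ in the probability bound. The constant $\useconstant{c:large_deviation}$ is chosen to satisfy $Z(eR^{-1}(\rho))/Z(R^{-1}(\rho))\le e^{\useconstant{c:large_deviation}\rho}$ and there is no reason $\useconstant{c:large_deviation}\rho\le \rho+1$ should hold for all $\rho\in[0,\rho_{+}]$ (already for $g(n)=n$ one gets $\useconstant{c:large_deviation}\ge e-1$, which exceeds $1+1/\rho$ once $\rho$ is moderately large), so your threshold can overshoot $A(u,t)$ and the bound you prove is then strictly weaker than the statement. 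The fix is to use Proposition~\ref{prop:concentration} with $\epsilon=1$ instead: it bounds $\PP_{\rho}\bigl[\sum_{x\in W}\eta_{0}(x)\ge(\rho+1)|W|\bigr]\le e^{-c(\rho)|W|}$, and with $|W|\approx 4\Gamma_{+}t+2u$ the threshold $(\rho+1)|W|$ is precisely $A(u,t)-1$, while $|W|\ge u$ gives the right exponential decay in $u$. This is exactly what the paper does. With this replacement your argument goes through.
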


\par This lemma is not sharp and can be regarded as a rough estimate that will be used to obtain better bounds later in the text. The quantity $A(u,t)$ is chosen so that we can use concentration of the invariant measure in a large interval around the origin. The strategy of the proof is to observe that if the event in the lemma holds, either some large interval has many particles or some particle reaches the origin from very far away.

\begin{proof}
Let $B$ be the event described in the lemma. Observe that, if $B$ holds, either the interval $J_{t}=[-\lfloor 2 \Gamma_{+} t +u \rfloor,\lfloor 2 \Gamma_{+} t+u \rfloor]$ contains more that $A(u,t)-1$ particles at time zero or some particle that started outside $J_{t}$ reaches zero before time $t$. Let $Y_{t} \sim \poisson(\Gamma_{+} t)$. Since each particle jumps at most $\poisson(\Gamma_{+} t)$ times, Proposition~\ref{prop:concentration} and Equation \eqref{eq:large_deviation} can be used to bound
\begin{equation}\label{eq:bound_A_many_particles}
\begin{split}
\PP[B] & \leq \PP[\sum_{k \in J_{t}}\eta_{0}(k) \geq A(u,t)-1]\\ 
& \quad + 2\sum_{y \geq 2 \Gamma_{+} t+u} \PP[\eta_{0}(y) \geq \useconstant{c:large_deviation}\rho+y]+(\useconstant{c:large_deviation}\rho + y)\PP[Y_{t} \geq y] \\
& \leq e^{-\useconstant{c:many_particles}^{-1}u}+2\sum_{y \geq 2 \Gamma_{+} t+u}e^{-y}+(\useconstant{c:large_deviation}\rho +y)e^{-\frac{y}{3}} \leq \useconstant{c:many_particles}(t+1)e^{-\useconstant{c:many_particles}^{-1}u},
\end{split}
\end{equation}
concluding the proof.
\end{proof}

\section{Vertical decoupling}\label{sec:vertical_decoupling}
~
\par In this section, we construct the main step towards the proof of our main theorem. We prove a vertical decoupling for the zero range process.

%%%%%
\newconstant{c:coupling_zrp}
\newbigconstant{C:coupling_zrp}
%%%%%

\par The next proposition is a central tool used in the proof of Theorem~\ref{teo:vertical_decoupling}. It provides a coupling between two zero range processes with densities $\rho$ and $\rho+\epsilon$ in a way that the process with larger density dominates the other in a fixed interval for some large time $t$.
\begin{prop}\label{prop:coupling}
Given $\rho_{+}>0$, there exist positive constants $\useconstant{c:coupling_zrp}=\useconstant{c:coupling_zrp}(\rho_{+})$ and $\usebigconstant{C:coupling_zrp}=\usebigconstant{C:coupling_zrp}(\rho_{+})$ such that, for any $t \geq \usebigconstant{c:coupling_zrp}$, interval $I \subset \ZZ$, density $\rho \in [0,\rho_{+}]$ and $ \epsilon \in (0,1]$, there exists a coupling between two zero range processes $(\eta_{s})_{s \in \RR}$ and $(\bar{\eta}_{s})_{s \geq 0}$ such that
\begin{enumerate}
\item[1.] $(\eta_{s})_{s \in \RR}$ has density $\rho$ and $(\bar{\eta}_{s})_{s \geq 0}$ has density $\rho +\epsilon$;
\item[2.] $(\bar{\eta}_{s})_{s \geq 0}$ is independent from $(\eta_{s})_{s \leq 0}$;
\item[3.]
\begin{equation}\label{eq:bad_coupling}
\PP\left[
\begin{array}{cl}
\text{there exists } x \in I \\ \text{such that } \eta_{t}(x) > \bar{\eta}_{t}(x)
\end{array}
\right] \leq \useconstant{C:coupling_zrp} t (|I|+t)e^{-\useconstant{C:coupling_zrp}^{-1}\epsilon^{2}t^{\sfrac{1}{4}}}.
\end{equation}
\end{enumerate}
\end{prop}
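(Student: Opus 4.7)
The plan is to follow the sprinkling construction sketched in the proof overview. I would build $(\eta_s)_{s\in\RR}$ as a stationary zero range process of density $\rho$, sample $\bar\eta_0\sim\mu_{\rho+\epsilon}$ independently of $\eta_{\le 0}$, and run $(\bar\eta_s)_{s\ge 0}$ using a fresh family of Poisson clocks; items~1 and~2 are then automatic and all the work goes into establishing~3.

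\textbf{Initial matching.} Given the independent pair $(\eta_0,\bar\eta_0)$, I would tile a neighborhood of $I$ by boxes of some length $n$ (to be tuned later) and invoke Proposition~\ref{prop:concentration}: with probability at least $1-e^{-c\epsilon^2 n}$ per box, that box contains at least $\tfrac{1}{2}\epsilon n$ more $\bar\eta_0$-particles than $\eta_0$-particles. On this good event I would define an injective matching from $\eta_0$-particles to $\bar\eta_0$-particles inside each box, so that every matched pair starts at distance at most $n$. The joint evolution is then constructed using the graphical construction of Subsection~\ref{subsec:graphical} at the level of labelled particles, so that each particle follows its own dynamics until a matched pair occupies a common site, from which point their motions are forced to coincide forever; once a pair has merged, the $\eta$-particle is permanently dominated by its $\bar\eta$-partner.

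\textbf{Multiple rounds.} A single attempt is not enough, because two one-dimensional random walks starting at distance $d$ fail to meet by time $\tau$ with probability $\Theta(d/\sqrt{\tau})$, which is only polynomial. To amplify this to the required bound I would split $[0,t]$ into $N$ rounds and, at the start of each round, remake the matching by pairing still-uncovered $\eta$-particles with hitherto-unused $\bar\eta$-particles inside each box, again via Proposition~\ref{prop:concentration}. A balanced choice with $n\asymp t^{1/4}$ and correspondingly many rounds makes the matching failure per box of order $e^{-c\epsilon^2 n}=e^{-c\epsilon^2 t^{1/4}}$; the combinatorial factor $t(|I|+t)$ in \eqref{eq:bad_coupling} arises from a union bound over the boxes and rounds that can affect $\eta_t|_I$, with Lemma~\ref{lemma:many_particles} justifying the restriction to an $O(t)$-neighborhood of $I$ at negligible cost.

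\textbf{Main obstacle.} The subtlest step is controlling the ``remake'' between rounds. After the first round, the conditional law of the unmatched portion of $\bar\eta$ is no longer a product invariant measure, so the box-by-box concentration needed to build a fresh matching has to be re-established along the coupled trajectory. I would handle this by combining the monotonicity of the graphical construction with a local-time style estimate on how often two independent walks come within distance one, in the spirit of \cite{bs,baldasso_teixeira,stauffer}. A key simplification is that any additional $\bar\eta$-particle injected during a remake only helps the coupling, so one only needs lower-bound concentration for the surviving $\bar\eta$-mass, which a minor variant of Proposition~\ref{prop:concentration} provides.
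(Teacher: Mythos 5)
Your overall architecture matches the paper's: sample $\bar\eta_0\sim\mu_{\rho+\epsilon}$ independently of $\eta_{\le 0}$, tile a neighborhood of $I$ by boxes of length $\asymp t^{1/4}$, match $\eta$-particles to $\bar\eta$-particles box-by-box using Proposition~\ref{prop:concentration}, run the coupled dynamics so that a matched pair that meets stays together forever, remake the matching at $\asymp t^{1/4}$ intermediate times to amplify the polynomial one-round meeting probability to a stretched exponential, and finish by a union bound combined with Lemma~\ref{lemma:many_particles} to localize. That is exactly the paper's scheme, and your numerology ($n\asymp t^{1/4}$ boxes, rounds of length $\asymp t^{3/4}$) reproduces the paper's scales.

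However, what you call the ``main obstacle'' is not an obstacle in the paper's argument, and the fix you sketch is both unnecessary and heavier than what is needed. You worry that after a remake the conditional law of the unmatched $\bar\eta$-mass is no longer product, and propose a local-time estimate and a conditional version of the concentration bound. The paper never touches conditional laws: since $(\eta_s)$ and $(\bar\eta_s)$ are each \emph{marginally} stationary zero range processes, at every matching time $t_k$ the \emph{marginals} $\eta_{t_k}$ and $\bar\eta_{t_k}$ are exactly $\mu_\rho$ and $\mu_{\rho+\epsilon}$; Claim~\ref{claim:concentration} is stated precisely so that it applies to such a pair ``even if the configurations are not independent,'' and one simply takes a union bound over boxes and over the $\asymp t^{1/4}$ matching times. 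Met pairs are co-located, so whenever the box counts satisfy $\sigma_j(\eta_{t_k})\le\sigma_j(\bar\eta_{t_k})$ the leftover unmatched $\eta$-particles can always be injected into the leftover $\bar\eta$-particles; no separate control of the ``surviving $\bar\eta$-mass'' is required. Also note that nothing is injected at a remake — the particle counts are fixed and only the pairing changes — so the sentence about extra $\bar\eta$-particles helping is not quite on point.

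You are also too vague about the joint dynamics. Writing ``each particle follows its own dynamics until a matched pair occupies a common site'' leaves open whether the two processes share Poisson clocks or not, and either naive choice fails: with a single shared graphical construction unmatched pairs need not ever meet, and with fully independent clocks the pairs would not stay glued once they meet and $\eta$ would cease to be a zero range process. The paper's resolution is to use two independent copies $\mathscr{P}_1,\mathscr{P}_2$ of the graphical construction, have $\bar\eta$ always use $\mathscr{P}_1$, and have each $\eta$-particle use $\mathscr{P}_2$ until it meets its pair and $\mathscr{P}_1$ afterward, with a careful height-reshuffling rule so that met pairs remain together and both marginals are genuine zero range processes (the infinite-volume justification is Claim~\ref{claim:coupling}). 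Your sketch does not identify this device, and without it the core meeting estimate and the claim that $\eta$ is a zero range process are not justified.
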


\par  We postpone the proof of this proposition to the next subsection. Assuming its validity, we are in position to prove the decoupling for the zero range process.

\begin{proof}[Proof of Theorem~\ref{teo:vertical_decoupling}]
In the proof, it is useful to keep Figure~\ref{fig:boxes_vertical} in mind. Without loss of generality, we assume that the boxes have the form
\begin{align*}
B_{1}=\left[-\sfrac{s}{2}, \sfrac{s}{2}\right]\times[-s,0], \\
B_{2}=\left[a, a+s\right]\times[\dist_{V},\dist_{V}+s],
\end{align*}
where $\sfrac{s}{2}$ and $a$ are positive integer numbers.

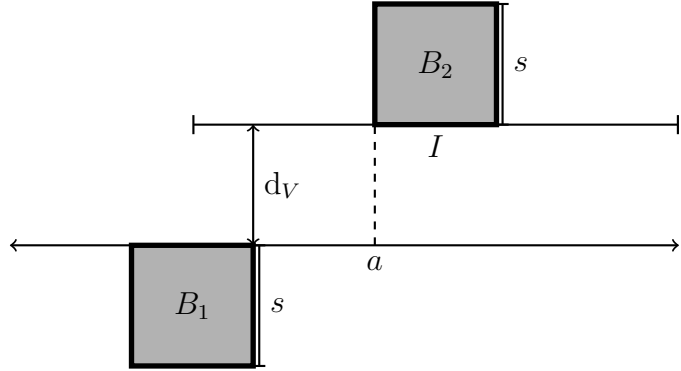
\begin{figure}[h]\label{fig:boxes_vertical}
\begin{center}
\begin{tikzpicture}[scale=0.8]
% Grids
\draw[<->, thick]  (-3,0) -- (8,0);

% Boxes
\fill [black!30!](-1,-2) rectangle (1,0);
\draw [line width=2pt] (-1,-2) rectangle (1,0);
\node at (0,-1) {$B_{1}$};

\fill [black!30!](3,2) rectangle (5,4);
\draw [line width=2pt] (3,2) rectangle (5,4);
\node at (4,3) {$B_{2}$};

% Interval I
\draw[|-|, thick](0,2)--(8,2);
\node[below] at (4,2) {$I$};

% Time lines
\draw[<->, thick]  (1,0) -- (1,2);
\node[right] at (1,1) {d$_{V}$};

\draw[thick] (5.1,2)--(5.1,4);
\draw[thick] (5,2)--(5.2,2);
\draw[thick] (5,4)--(5.2,4);
\node[right] at (5.1,3) {$s$};

\draw[thick] (1.1,0)--(1.1,-2);
\draw[thick] (1,0)--(1.2,0);
\draw[thick] (1,-2)--(1.2,-2);
\node[right] at (1.1,-1) {$s$};

\draw[dashed, thick]  (3,0) -- (3,2);
\node[below] at (3,0) {$a$};
\end{tikzpicture}
\caption{The boxes $B_{1}$, $B_{2}$ and the interval $I$.}
\end{center}
\end{figure}

Let $I=[-\lceil 2 \Gamma_{+} s + \dist_{V}\rceil +a, a+s +\lceil 2 \Gamma_{+} s+ \dist_{V}\rceil]$ and  define the event
\begin{equation}
E=\left\{\begin{array}{cl}
\text{some particle of } \eta \text{ is outside } I \\ \text{ at time } \dist_{V} \text{ and enters the box } B_{2}
\end{array}
\right\}.
\end{equation}

If $\dist_{V} \geq \usebigconstant{C:coupling_zrp}$, we can use the coupling of Proposition~\ref{prop:coupling} with the interval $I$. Define the bad event for the coupling
\begin{equation}
F=\left\{
\begin{array}{cl}
\text{there exists } x \in I \\ \text{such that } \eta_{\dist_{V}}(x) > \bar{\eta}_{\dist_{V}}(x)
\end{array}
\right\}.
\end{equation} 

Let $\EE$ denote the expectation with respect to the coupling measure provided by Proposition~\ref{prop:coupling}. The fact that the functions $f_{1}$ and $f_{2}$ are non-decreasing and Markov's property can be used to obtain
\begin{equation}\label{eq:V_decoupling_bound_1}
\begin{split}
\EE_{\rho}[f_{1}f_{2}]& \leq \EE[f_{1}(\eta) f_{2}(\eta)(\charf{E^{c}\cap F^{c}}+\charf{E}+\charf{F})]\\
& \leq \EE[f_{1}(\eta)f_{2}(\bar{\eta})\charf{E^{c}\cap F^{c}}] +\PP[E]+\PP[F]\\
& \leq \EE[f_{1}(\eta)f_{2}(\bar{\eta})] +\PP[E]+\PP[F]\\
& = \EE[f_{1}(\eta)\EE[f_{2}(\bar{\eta})|\{\eta_{s}:s \leq 0\}]] +\PP[E]+\PP[F]\\
& = \EE[f_{1}(\eta)\EE[f_{2}(\bar{\eta})]] +\PP[E]+\PP[F]\\
& \leq \EE_{\rho}[f_{1}]\EE_{\rho+\epsilon}[f_{2}]+\PP[E]+\PP[F].
\end{split}
\end{equation}

Proposition~\ref{prop:coupling} implies, by possibly increasing constants, that
\begin{equation}\label{eq:V_decoupling_bound_2}
\PP[F] \leq \useconstant{c:coupling_zrp} \dist_{V}(\dist_{V}+s)e^{-\useconstant{c:coupling_zrp}^{-1}\epsilon^{2}\dist_{V}^{\sfrac{1}{4}}},
\end{equation}
It remains to bound the probability of $E$. Here, we apply the same ideas from the proof of Lemma~\ref{lemma:many_particles}. We use symmetry and the fact that, in order for a particle that is at site $y+\lceil 2 \Gamma_{+} s +\dist_{V} \rceil$, with $y \geq 0$, at time $\dist_{V}$ to enter $B_{2}$, it is necessary for it to jump at least $y+\lceil 2 \Gamma_{+} s +\dist_{V} \rceil$ times before time $\dist_{V}+s$. Since the number of jumps a particle performs between times $\dist_{V}$ and $\dist_{V}+s$ is bounded by a random variable $X \sim \poisson(\Gamma_{+} s)$ , we obtain
\begin{equation}\label{eq:V_decoupling_bound_3}
\begin{split}
\PP[E] & \leq 2\sum_{y \geq 0} \PP[\eta_{\dist_{V}}(y+\lceil 2 \Gamma_{+} s +\dist_{V} \rceil) \geq \useconstant{c:large_deviation}\rho+\dist_{V}+1+y] \\
& \qquad + 2\sum_{y \geq0}(\useconstant{c:large_deviation}\rho+\dist_{V}+1 + y)\PP[X \geq y+\lceil 2 \Gamma_{+} s +\dist_{V} \rceil] \\
& \leq 2\sum_{y \geq 0}e^{-y-\dist_{V}}+(\useconstant{c:large_deviation}\rho+\dist_{V}+1+y)e^{-y-\dist_{V}} \\
& \leq c(\dist_{V}+1)e^{-\dist_{V}}.
\end{split}
\end{equation}

Combining Equations \eqref{eq:V_decoupling_bound_1}, \eqref{eq:V_decoupling_bound_2}, \eqref{eq:V_decoupling_bound_3}, and possibly changing constants concludes the proof.
\end{proof}

\subsection{The coupling}\label{subsec:coupling}
~
\par This subsection is devoted to the construction of the coupling stated in Proposition~\ref{prop:coupling}. We begin this subsection with an informal description of it.

\par Fix two initial independent configurations $\eta_{0} \sim \mu_{\rho}$ and $\bar{\eta}_{0} \sim \mu_{\rho+\epsilon}$. The strategy is to match the particles of the configuration $\eta_{0}$ to particles of the configuration $\bar{\eta}_{0}$. Once this matching is constructed, we set the joint evolution of the pair $(\eta_{s},\bar{\eta}_{s})$.

\par The processes evolve in such a way that, if two matched particles share at any time the same site, they keep moving together. This will help to assure that $\eta_{t}(x) \leq \bar{\eta}_{t}(x)$, for every $x \in I$, with high probability.

\par The correct construction of the matching is important to ensure that each pair meets fast enough with large probability. This is done by restricting the distance between two particles that are matched.

\par For the evolution, we use the matching and two independent copies of the graphical construction presented in Subsection~\ref{subsec:graphical}. This will help to evolve both processes in a way that particles that have met their pairs do not disturb the particles that still did not and hence do not decrease the probability of the meeting event.

\par However, this construction is not enough to obtain the desired result, since, as we will see, the decay of the probability that two matched particles do not meet is related with the probability that a random walk does not reach zero which does not decay fast enough. We improve this bound by remaking the matching at some fixed times, allowing particles to have new pairs and new chances to meet.

\par We now begin the construction of the coupling. We follow the lines of~\cite{baldasso_teixeira}.

\begin{remark}
All constants that appear from now on are uniformly bounded for any $\rho \in [0,\rho_{+}]$ and may depend also on $\Gamma_{-}$ and $\Gamma_{+}$. We will omit these dependencies.
\end{remark}

\par The first step is to fix an interval $H$ that contains $I= [a,b]$. In our case, we set $H=[a-\lceil 3 \Gamma_{+} t\rceil, b+ \lceil 3 \Gamma_{+} t\rceil]$. This choice allows us to easily bound the probability that a particle that is outside $H$ reaches $I$ before time $t$. Now, split $H$ into a collection of subintervals $(I_{j})_{j=1}^{N}$. We will assume that all intervals $I_{j}$ have the same size $L=\lfloor t^{\sfrac{1}{4}}\rfloor$. It is possible to assure this if we increase the size of $H$ by at most $L$. Besides, the number of intervals $N$ is clearly bounded by $|H|$.

%%%%%
\newconstant{c:concentration}
%%%%%

\par For any configuration $\bar{\eta}$, denote by $\sigma_{j}(\bar{\eta})=\sum_{x \in I_{j}}\bar{\eta}(x)$ the number of particles of $\bar{\eta}$ inside the interval $I_{j}$. We have the following claim.
\begin{claim}\label{claim:concentration}
If $\eta \sim \mu_{\rho}$ and $\bar{\eta} \sim \mu_{\rho+\epsilon}$, with $\epsilon \in (0,1]$, then
\begin{equation}\label{eq:concentration}
\PP[\exists \, j \leq N: \sigma_{j}(\eta) > \sigma_{j}(\bar{\eta})] \leq 2Ne^{-\useconstant{c:concentration} \epsilon^{2}t^{\sfrac{1}{4}}},
\end{equation}
even if the configurations are not independent.
\end{claim}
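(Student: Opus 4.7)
The plan is to use the concentration estimates of Proposition~\ref{prop:concentration} separately on each marginal and then take a union bound over the $N$ intervals. The key observation that lets the claim hold even without independence of $\eta$ and $\bar\eta$ is that we only need concentration inside each marginal distribution, and since $\mu_\rho$ and $\mu_{\rho+\epsilon}$ are product measures, the variables $\{\eta(x)\}_{x \in I_j}$ (resp.\ $\{\bar\eta(x)\}_{x \in I_j}$) are i.i.d.\ of the required form.

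First I would fix a threshold $m_j = (\rho + \epsilon/2)|I_j|$ in each interval and observe the set inclusion
\begin{equation*}
\{\sigma_j(\eta) > \sigma_j(\bar\eta)\} \subseteq \bigl\{\sigma_j(\eta) \geq m_j\bigr\} \cup \bigl\{\sigma_j(\bar\eta) \leq m_j\bigr\},
\end{equation*}
valid for each $j$, since if both complementary events occurred one would have $\sigma_j(\eta) < m_j \leq \sigma_j(\bar\eta)$. This step is what decouples the two marginals: the inclusion only involves $\eta$ on the left-hand event and $\bar\eta$ on the right, so the joint distribution is irrelevant.

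Next, applying the upper-tail estimate \eqref{eq:concentration_upper_bound} to $\sigma_j(\eta) = \sum_{x \in I_j} \eta(x)$ (a sum of $L = \lfloor t^{1/4} \rfloor$ i.i.d.\ copies of $\mu_\rho$) with deviation parameter $\epsilon/2$, and the lower-tail estimate \eqref{eq:concentration_lower_bound} to $\sigma_j(\bar\eta)$ (a sum of $L$ i.i.d.\ copies of $\mu_{\rho+\epsilon}$) with the same deviation, each summand in the union bound is at most $e^{-c(\rho_+)(\epsilon/2)^2 L}$; here I use that $c(\cdot)$ in Proposition~\ref{prop:concentration} is bounded below on $[0,\rho_+ + 1]$ uniformly, so a single constant $\useconstant{c:concentration}$ works for all $\rho \in [0,\rho_+]$ and $\epsilon \in (0,1]$. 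Combining with a union bound over the $N$ intervals yields
\begin{equation*}
\PP[\exists\, j \leq N : \sigma_j(\eta) > \sigma_j(\bar\eta)] \leq 2N \, e^{-\useconstant{c:concentration}\epsilon^2 L} \leq 2N \, e^{-\useconstant{c:concentration}\epsilon^2 t^{1/4}},
\end{equation*}
after absorbing the constant $1/4$ and the floor function into $\useconstant{c:concentration}$.

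There is no real obstacle here: the claim is essentially a packaged application of Hoeffding-type concentration for the product invariant measure together with the trick of inserting the common threshold $m_j$ to avoid any joint law assumption. The only mild care needed is to verify that the constant in Proposition~\ref{prop:concentration} can be chosen uniformly for $\rho$ ranging in $[0,\rho_+]$ and $\rho+\epsilon$ in $[0,\rho_++1]$, which is precisely what that proposition guarantees.
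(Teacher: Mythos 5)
Your proposal is correct and fills in exactly the argument the paper leaves implicit: its proof of Claim~\ref{claim:concentration} is the one-line ``It follows directly from Proposition~\ref{prop:concentration},'' and the insertion of the common threshold $m_j=(\rho+\epsilon/2)|I_j|$ followed by the upper/lower tail bounds and a union bound over the $N$ intervals is the intended route. The observation that the threshold trick makes the joint law of $(\eta,\bar\eta)$ irrelevant is precisely what the phrase ``even if the configurations are not independent'' is alluding to, and the uniformity of $c(\cdot)$ over $[0,\rho_++1]$ is guaranteed by Proposition~\ref{prop:concentration} as you note.
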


\begin{proof}
It follows directly from Proposition~\ref{prop:concentration}.
\end{proof}

\par We now sample independently two configurations $\eta_{0} \sim \mu_{\rho}$ and $\bar{\eta}_{0} \sim \mu_{\rho+\epsilon}$ and assume that the event in~\eqref{eq:concentration} does not hold.

\par The next step is to match the two configurations inside each of the intervals of the partition. In this matching, each particle of the configuration $\eta_{0}$ that lies inside the interval $I_{j}$ will be paired to a particle of the configuration $\bar{\eta}_{0}$ that is inside the same interval, but this construction is done in a special way. In the first step, for each site $x \in I_{j}$, we match the largest number possible of particles of $\eta_{0}$ at $x$ to particles of $\bar{\eta}_{0}$ that are at the same site (see Figure~\ref{fig:pairing}). Once this is done we can finish. There are many ways to match the remaining particles in a deterministic way. We fix an arbitrary algorithm from now on.

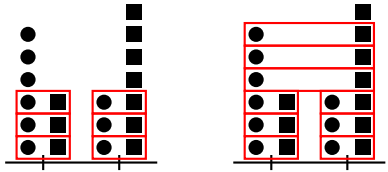
\begin{figure}[h]\label{fig:pairing}
\begin{center}
\begin{tikzpicture}

% Grids
\draw[thick] (-2.5,0)--(-0.5,0);
\draw[thick] (0.5,0)--(2.5,0);
\foreach \x in {-2, -1, 1, 2}
		\draw[thick](\x,-0.1)--(\x,0.1);

% Particles
\foreach \x in {-2,-1}
	{
		\foreach \y in {0,0.3,0.6}
			{
				\fill[black] (\x-0.2,0.2+\y) circle (0.1);
				\fill[black] (\x+3-0.2,0.2+\y) circle (0.1);
			}
	}
\foreach \y in {0.9,1.2,1.5}
	{
		\fill[black] (-2.2,0.2+\y) circle (0.1);
		\fill[black] (0.8,0.2+\y) circle (0.1);
	}

\foreach \x in {-2,-1}
	{
		\foreach \y in {0,0.3,0.6}
			{
				\fill[black] (\x+0.1,\y+0.1) rectangle (\x+0.3,\y+0.3);
				\fill[black] (\x+3+0.1,\y+0.1) rectangle (\x+3+0.3,\y+0.3);
			}
	}
\foreach \y in {0.9,1.2,1.5,1.8}
	{
		\fill[black] (-0.9,\y+0.1) rectangle (-0.7,\y+0.3);
		\fill[black] (2.1,\y+0.1) rectangle (2.3,\y+0.3);
	}
			
% Pairing
\foreach \x in {-2,-1,1,2}
	{
		\foreach \y in {0,0.3,0.6}
			{
				\draw[red, thick] (\x-0.35,\y+0.35) rectangle (\x+0.35, \y+0.05);
			}
	}
\foreach \y in {0.9,1.2,1.5}
	{
		\draw[red, thick] (1-0.35,\y+0.35) rectangle (2+0.35, \y+0.05);
	}

\end{tikzpicture}
\caption{The construction of a matching between two configurations. Balls represent the process $\eta$ and squares represent the configuration $\bar{\eta}$. First, pair as many particles of $\eta$ to particles of $\bar{\eta}$ as possible that are in the same site, and then complete the construction in an arbitrary deterministic way.}
\end{center}
\end{figure}

\par Once we have this matching, it is time to set the evolution for positive times. We proceed as follows. Let $\mathscr{P}_{1}=(\mathcal{P}_{1}(x))_{x \in \ZZ}$ and $\mathscr{P}_{2}=(\mathcal{P}_{2}(x))_{x \in \ZZ}$ be two independent copies of the graphical construction described in Subsection~\ref{subsec:graphical}. We use the clocks from $\mathscr{P}_{1}$ to evolve the process $(\bar{\eta}_{s})_{s \geq 0}$. On the other hand, the process $(\eta_{s})_{s \geq 0}$ will alternate between both constructions: if a particle of $\eta$ has met its pair, it uses the clocks from $\mathscr{P}_{1}$. Otherwise, it moves with the graphical construction $\mathscr{P}_{2}$.

\par Observe however that if a particle always jumps to the top of its new pile, then it is not necessarily true that particles that meet jump together as the two piles could have different heights. This is fixed by updating the order in the piles after each jump. More precisely, if a pair of matched particles jumps together, they will land at the bottom of the pile. Moreover, when a particle jumps alone, it will look for its matching particle at the next pile: if the particle and its pair are at the same site, they will both move to the bottom of the pile. Otherwise, the new particle will land on the top of its new corresponding pile.

\par This construction ensures that, if two particles have met, they remain together, and allows for pairs of particles that did not meet to do so.

\par Since the process $(\bar{\eta}_{s})_{s \geq 0}$ follows the original graphical construction up to changing heights of particles in the piles, it clearly behaves like a zero range process. Besides, it is independent from $(\eta_{0})_{s \leq 0}$, since it depends only on $\bar{\eta}_{0}$ and on $\mathscr{P}_{1}$. From now on, we will only consider the process $\eta$ restricted to positive times. It remains to prove that the $(\eta_{s})_{s \geq 0}$ is also a zero range process.

\begin{claim}\label{claim:coupling}
The process $(\eta_{s})_{s \geq 0}$ is a zero range process.
\end{claim}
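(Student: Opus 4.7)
The plan is to verify that $(\eta_s)_{s \geq 0}$ has the correct initial distribution and the correct infinitesimal generator, namely $L$ applied with $p$ the simple symmetric transition probability and rate function $g$. The initial distribution is clear: by construction $\eta_0 \sim \mu_\rho$, since the matching and the reservation of a clock source are made only \emph{after} $\eta_0$ and $\bar\eta_0$ are sampled, and do not alter the marginal law of $\eta_0$.

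For the dynamics, I would fix $s \geq 0$ and condition on the $\sigma$-algebra $\mathcal{F}_s$ generated by $\eta_0, \bar\eta_0$ and by the restrictions $\mathscr{P}_1|_{[0,s]}, \mathscr{P}_2|_{[0,s]}$. Two points need to be isolated. First, the entire state of the coupling at time $s$ is $\mathcal{F}_s$-measurable: the configurations $\eta_s$ and $\bar\eta_s$, the current matching, the pile orderings, and in particular the labelling of each particle as matched or unmatched. Second, by the strong Markov property for Poisson processes and by the independence of $\mathscr{P}_1$ and $\mathscr{P}_2$, the restrictions $\mathscr{P}_1|_{(s,\infty)}$ and $\mathscr{P}_2|_{(s,\infty)}$ are independent Poisson point processes on $(s,\infty) \times \NN \times [0,1] \times \{-1,+1\}$, each with the original intensity, and are independent of $\mathcal{F}_s$.

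Now fix a site $x$ with $\eta_s(x) = k$. For every height $n \in \{1,\dots,k\}$, the $n$-th particle in the pile at $x$ is either matched or unmatched; whichever it is, its jump attempts in $(s,s+h]$ are read from the appropriate process $\mathscr{P}_i(x)$ along the slice with second coordinate $n$. Since that slice is, conditionally on $\mathcal{F}_s$, a Poisson process on $(s,s+h] \times [0,1] \times \{-1,+1\}$ of intensity $\Gamma_+ \lambda \otimes \lambda \otimes \tfrac{1}{2}(\delta_{-1}+\delta_{+1})$, the rule~\eqref{eq:jump_condition} makes the $n$-th particle jump with infinitesimal rate $g(n)-g(n-1)$ to a uniformly chosen nearest neighbour. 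Summing over $n$,
\begin{equation*}
\sum_{n=1}^{k} \bigl(g(n)-g(n-1)\bigr) = g(k),
\end{equation*}
so the total rate of a transition $\eta_s \to \eta_s^{x,x\pm 1}$ at $x$ is $\tfrac{1}{2}g(k)$, matching the summand of $Lf(\eta_s)$ at $x$. The reshuffling of piles (sending matched pairs to the bottom, or reinserting on meeting) only relabels particles \emph{within} a site and does not affect the integer-valued configuration $\eta_s$, so it contributes no additional transitions. Finally, since the slices of $\mathscr{P}_1$ and $\mathscr{P}_2$ at distinct $(x,n)$ are independent, the jump events at distinct heights and sites are conditionally independent, so the rates simply sum.

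The main subtlety is the "effective clock" argument: although the choice between $\mathscr{P}_1$ and $\mathscr{P}_2$ at a given (site, height) pair is a complicated $\mathcal{F}_s$-measurable selection, this does not bias the conditional law of the Poisson points in $(s,s+h]$, because \emph{both} processes retain their full intensity after conditioning on $\mathcal{F}_s$. Once this is made rigorous, standard arguments (e.g.\ checking that $f(\eta_t) - \int_0^t Lf(\eta_u)\,du$ is a local martingale for every bounded local $f$) identify $(\eta_s)_{s \geq 0}$ as a zero range process with rate $g$ started from $\mu_\rho$.
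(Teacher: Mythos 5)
Your effective-clock observation captures the paper's key idea: conditionally on $\mathcal{F}_s$, each height $n$ at each site reads from one of $\mathscr{P}_1$ or $\mathscr{P}_2$, both of which retain full Poisson intensity, so the instantaneous jump rate is $g(n)-g(n-1)$ to each neighbour, and the reshufflings only relabel particles within a site without changing the occupation numbers. This is exactly the paper's (terser) argument for the case where $\eta_0$ has finitely many particles.

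The genuine gap is the infinite-particle case, which is the one that actually matters here since $\eta_0\sim\mu_\rho$ has infinitely many particles almost surely. Your appeal to ``standard arguments'' conceals real work: for the infinite system, $Lf$ is an infinite sum, the martingale property for the coupled construction is not immediate, and even once it is established one still needs uniqueness of the martingale problem (Andjel). The paper resolves this in Appendix~\ref{apdx:claim} by a truncation argument: it compares $\eta$ to the process $\eta^{m}$ started from the restriction of $\eta_0$ to $[-m-n,m+n]$, applies the finite-particle argument to $\eta^{m}$, and uses Lemma~\ref{lemma:many_particles} to show that $\eta^{m}_{t}$ and $\eta_{t}$ agree on $[-n,n]$ with probability tending to $1$ as $m\to\infty$, since particles starting outside $[-m-n,m+n]$ are unlikely to reach the observation window by time $t$. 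This identifies the semigroups $T_{t}$ and $S_{t}$ on local functions and bypasses a direct generator/martingale-problem verification for the infinite system. Your clause ``once this is made rigorous, standard arguments\dots'' is precisely where that appendix lives, and it is not a routine gap to fill.
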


\begin{proof}
Consider first the case when $\eta_{0}$ has finitely many particles. In this case, simply observe that the only modification we perform is changing the heights of the particles and choosing which of the two possible clocks these particles use.  This implies that our dynamics is a zero range process.

The case when $\eta_{0}$ has infinitely many particles is more delicate and we treat it in Appendix~\ref{apdx:claim}.
\end{proof}

\par Now that we have the main part of the coupling, we work the details in order to obtain the bound in~\eqref{eq:bad_coupling}. We introduce the \textbf{matching times} $(t_{k})_{k=0}^{\lfloor t^{\sfrac{1}{4}} \rfloor}$ defined by $t_{k}=kt^{\sfrac{3}{4}}$. At these times, the matching is remade preserving the couples already formed. This procedure will help the particles that still have not found their pairs by giving them new ones that are hopefully closer to them than their old partners were.

\par We now need to bound the probability that some particle that lies inside the interval $I$ at time $t$ did not find a couple during the time interval $[0,t]$. Let
\begin{equation}
A=\left\{\begin{array}{cl}
\text{there exists a particle from } \eta \text{ that is inside $I$} \\ \text{at time $t$} \text{ and did not find a couple in any of its attempts}
\end{array}\right\}.
\end{equation}

\par To bound the probability of $A$, we begin by bounding the probability of some bad events. The first event we introduce is related to the possibility that some particle that ends up in the interval $I$ at time $t$ does not find a couple because it is outside the interval $H$ at some time where the matching is remade. We consider
\begin{equation}\label{eq:B}
B=\left\{\begin{array}{cl}
\text{there exists a particle from } \eta \text{ that spends time} \\ \text{outside $H$ and ends up inside $I$ at time $t$}
\end{array}\right\}.
\end{equation}
The next event deals with the possibility that, for some matching time, it is not possible to construct the matching. Recall that $\sigma_{j}(\bar{\eta})=\sum_{x \in I_{j}}\bar{\eta}(x)$ and define the event
\begin{equation}
C=\left\{\begin{array}{cl}
\text{there exist a matching time $t_{k}$ and $j \in [N]$} \\ \text{such that } \sigma_{j}(\eta_{t_{k}}) > \sigma_{j}(\bar{\eta}_{t_{k}})
\end{array}\right\}.
\end{equation}

\par The bound in the probability of $C$ follows from Claim~\ref{claim:concentration} and union bound. We obtain
\begin{equation}\label{eq:bound_C}
\PP[C] \leq 2(t^{\sfrac{1}{4}}+1)Ne^{-\useconstant{c:concentration} \epsilon^{2}t^{\sfrac{1}{4}}}.
\end{equation}

The bound on the probability of $B$ is more delicate, and we state it as a claim.

%%%%%
\newconstant{c:bound_B}
%%%%%
\begin{claim}
There exists a constant $\useconstant{c:bound_B}>0$ such that, if $t$ is large enough,
\begin{equation}\label{eq:bound_B}
\PP[B] \leq \useconstant{c:bound_B}t^{2}e^{-\useconstant{c:bound_B}^{-1} t}.
\end{equation}
\end{claim}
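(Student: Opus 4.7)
The key observation is that any particle realizing the event $B$ must make at least $M := \lceil 3\Gamma_{+} t \rceil$ jumps during $[0,t]$. Indeed, since $H = [a - \lceil 3\Gamma_{+} t\rceil, b + \lceil 3\Gamma_{+} t\rceil]$, the distance between the complement of $H$ and $I = [a,b]$ is at least $M$, and a particle outside $H$ at some time $s \in [0,t]$ and inside $I$ at time $t$ must traverse at least this distance by nearest-neighbor jumps between times $s$ and $t$. The plan is therefore to bound the probability that any single particle makes that many jumps, and then sum over particles.

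The per-particle estimate comes directly from the graphical construction of Subsection~\ref{subsec:graphical}. At any (site, height) pair, the corresponding Poisson clock rings at rate $\Gamma_{+}$, and a given particle is subject to exactly one such clock at any instant (the one at its current height and site). Hence the number of jumps of a single particle during $[0,t]$ is stochastically dominated by a random variable $Y_{t} \sim \poisson(\Gamma_{+} t)$, regardless of what the other particles do. A standard Chernoff bound then gives $\PP[Y_{t} \geq k] \leq e^{-c_{1} k}$, uniformly in $k \geq 3\Gamma_{+} t$, for some constant $c_{1} > 0$ depending only on $\Gamma_{+}$.

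To pass from this per-particle bound to a bound on $\PP[B]$, I plan to apply Markov's inequality to the number of particles realizing $B$, together with linearity of expectation:
\begin{equation*}
\PP[B] \leq \EE_{\rho}\bigl[\#\{\text{particles realizing } B\}\bigr] \leq \sum_{y_{0} \in \ZZ} \rho \cdot \PP\bigl[Y_{t} \geq \max(M, \dist(y_{0}, I))\bigr],
\end{equation*}
using $\EE_{\rho}[\eta_{0}(y_{0})] = \rho$ and that a particle starting at $y_{0}$ must make at least $\dist(y_{0}, I)$ jumps to be in $I$ at time $t$. Splitting the sum according to whether $y_{0} \in H$ or $y_{0} \notin H$: the first piece contributes at most $|H| \cdot e^{-c_{1} t} = O((|I|+t) e^{-c_{1} t})$ by the Chernoff bound; the second piece is a tail sum of geometrically decaying Poisson probabilities in $\dist(y_{0}, I) > M$ and contributes $O(e^{-c_{2} t})$. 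Since $|I|$ is of order $t$ in our application, the total bound is $O(t^{2} e^{-c t})$, matching the claim for $t$ large enough. The one subtlety to keep an eye on is the Poisson-domination step for a single particle in the presence of interactions, but this is cleanly handled by the graphical construction, where the clocks at each (site, height) pair are driven by independent Poisson processes of rate $\Gamma_{+}$, so a single particle's jump count is stochastically dominated by $\poisson(\Gamma_{+} t)$ no matter how the rest of the configuration evolves.
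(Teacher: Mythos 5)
Your per-particle estimate is sound — the graphical construction indeed dominates each tagged particle's jump count by $\poisson(\Gamma_{+}t)$, and the paper uses this same fact — but the way you aggregate over particles produces a bound that depends on $|I|$, whereas the claim states a bound $\useconstant{c:bound_B}t^{2}e^{-\useconstant{c:bound_B}^{-1}t}$ that is independent of $|I|$. Summing over starting positions $y_{0}$, your first piece is
$\sum_{y_{0}\in H}\rho\,\PP[Y_{t}\geq M]\approx \rho\,|H|\,e^{-c_{1}t}$,
and $|H|=|I|+2\lceil 3\Gamma_{+}t\rceil+O(1)$, so this scales linearly in $|I|$. Your parenthetical ``since $|I|$ is of order $t$ in our application'' is not correct: in the proof of Theorem~\ref{teo:vertical_decoupling}, the interval fed to Proposition~\ref{prop:coupling} has $|I|=O(s+\dist_{V})$ with the side-length $s$ of the boxes completely unconstrained, so $|I|$ may be arbitrarily larger than $t=\dist_{V}$. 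As written, your estimate does not prove the claim.

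The paper avoids this $|I|$-dependence by a different decomposition. Rather than summing over where a particle \emph{starts}, it observes that every particle realizing $B$ must cross the boundary of $H$, and by symmetry one may fix attention on the single site $x$, the leftmost site to the right of $H$. One then introduces $\tilde A$ (the pile at $x$ exceeds $A(t,t)=O(t)$ at some time $\leq t$, controlled by Lemma~\ref{lemma:many_particles}) and $\tilde B$ (more than $3\Gamma_{+}A(t,t)t$ active clocks ring at $x$). On $\tilde A^{c}\cap\tilde B^{c}$ at most $3\Gamma_{+}A(t,t)t=O(t^{2})$ distinct particles ever visit $x$, and each such particle must make $\geq 3\Gamma_{+}t$ jumps to reach $I$, giving the $t^{2}e^{-ct}$ bound via the same Poisson tail you use. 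Everything is local to the single site $x$, so the bound is genuinely independent of $|I|$ — which is what the claim states. (Your weaker $(|I|+t)e^{-ct}$ estimate would in fact still be absorbed into the final bound of Proposition~\ref{prop:coupling}, since that already carries a factor $(|I|+t)$; so the argument could be salvaged for the proposition, but not for the claim as it stands.)
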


\begin{proof}
Denote by $x$ the leftmost site at the right of $H$. By symmetry, we only need to bound the probability that there exists a particle that spends some time at $x$ and is inside $I$ at time $t$.

To bound the probability of $B$, let $A(t,t)$ be as in Lemma~\ref{lemma:many_particles} and consider
\begin{equation}
\tilde{A}=\left\{\begin{array}{cl}
\eta_{s}(x) \geq A(t,t), \\ \text{for some } s \in [0,t]
\end{array}\right\},
\end{equation}
and
\begin{equation}
\tilde{B}=\left\{\begin{array}{cl}
\text{more than $3\Gamma_{+} A(t,t) t$ clocks} \\ \text{ring at $x$ before time $t$}
\end{array}\right\}.
\end{equation}

Since the number of jumps a fixed particle performs before time $t$ is bounded by a random variable $X \sim \poisson(\Gamma_{+} t)$, union bounds gives
\begin{equation}\label{eq:bound_B_begining}
\begin{split}
\PP[B] & \leq 2\left(\PP[\tilde{A}]+\PP[\tilde{B} \cap \tilde{A}^{c}]+3 \Gamma_{+} A(t,t)t\PP[X \geq 3 \Gamma_{+} t]\right) \\
& \leq 2\left(\PP[\tilde{A}]+\PP[\tilde{B} \cap \tilde{A}^{c}]+3 \Gamma_{+} A(t,t)t e^{-\Gamma_{+} t}\right).
\end{split}
\end{equation}

It remains to bound the probability of the events $\tilde{A}$ and $\tilde{B} \cap \tilde{A}^{c}$. For the later, observe that, in $\tilde{A}^{c}$, the number of clocks that ring at site $x$ before time $t$ is dominated by a Poisson random variable with mean $\Gamma_{+} A(t,t)t$. This implies
\begin{equation}\label{eq:bound_B_tilde}
\PP[\tilde{B} \cap \tilde{A}^{c}] \leq \PP[\poisson(\Gamma_{+} A(t,t) t) \geq 3 \Gamma_{+} A(t,t)t] \leq e^{-\Gamma_{+} t}.
\end{equation}

A bound on the probability of $\tilde{A}$ is obtained in Lemma~\ref{lemma:many_particles}. Combining Equations~\eqref{eq:many_particles},~\eqref{eq:bound_B_begining},~\eqref{eq:bound_B_tilde} and increasing, if necessary, the value of $t$, we conclude the claim.
\end{proof}

\par Assume we are in the event $B^{c}\cap C^{c}$. The next step is to bound the probability that a fixed particle that lies inside $I$ at time $t$ does not find a couple.

\par First, observe that, since particles of both process move faster than random walks with jump rate $\Gamma_{-}$, the probability that two particles do not meet between two matching times is at most the probability that a random walk with jump rate $2\Gamma_{-}$ and starting somewhere in the interval $[0,L]$ do not reach zero before time $t_{1}$. Since the initial distance between the pair is at most $L$, if $(X_{s})_{s \geq 0}$ is a random walk that jumps with rate one, standard heat-kernel bounds allows us to estimate
\begin{equation}
\begin{split}
\PP & \left[ \begin{array}{cl}
\text{a fixed pair matched of particles} \\ \text{do not meet before time $t^{\sfrac{3}{4}}$}
\end{array}
\right]  \leq \max_{0 \leq k \leq L}  \PP_{k}\left[\inf_{u\leq 2\Gamma_{-} t^{\sfrac{3}{4}}}X_{u} >0 \right] \\
& = \max_{0 \leq k \leq L} \PP_{0}\left[\sup_{u\leq 2\Gamma_{-} t^{\sfrac{3}{4}}}X_{u} <k \right] = \PP_{0}\left[\sup_{u\leq 2\Gamma_{-} t^{\sfrac{3}{4}}}X_{u} <L \right] \\
& = 1-\PP_{0}\left[\sup_{u\leq 2\Gamma_{-} t^{\sfrac{3}{4}}}X_{u} \geq L \right] \\
& \leq 1-2\PP_{0}\left[X_{2\Gamma_{-} t^{\sfrac{3}{4}}} > L \right] = \PP_{0}\left[|X_{2\Gamma_{-} t^{\sfrac{3}{4}}}| \leq L \right] \\
& = \sum_{k=-L}^{L}\PP_{0}\left[X_{2\Gamma_{-} t^{\sfrac{3}{4}}} =k \right] \leq \frac{C(2L+1)}{\sqrt{2\Gamma_{-} t^{\sfrac{3}{4}}}} \leq t^{\sfrac{-1}{16}},
\end{split}
\end{equation}
if $t$ is large enough.

\par Since we are assuming we are in the event $B^{c} \cap C^{c}$, we bound
\begin{equation}\label{eq:bound_without_pair}
\PP\left[\begin{array}{cl}
\text{a particle that is inside $I$} \\ \text{at time $t$ do not meet} \\ \text{any of its pairs}, B^{c} \cap C^{c}
\end{array}\right] \leq t^{-\frac{1}{16}\frac{t^\frac{1}{4}}{2}} \leq e^{-\frac{1}{32}t^{\frac{1}{4}} \log t}.
\end{equation}

\par The last step is to bound the number of particles inside $H$ at time zero. We choose $\useconstant{c:large_deviation}$ as in~\eqref{eq:large_deviation} and bound
\begin{equation}\label{eq:I_full}
\PP\left[\begin{array}{cl}
\text{there is more than $\useconstant{c:large_deviation}\rho |H|+t$} \\ \text{particles from $\eta$} \\ \text{inside $H$ at time zero}
\end{array}\right] \leq \left[\frac{Z(eR^{-1}(\rho))}{Z(R^{-1}(\rho))}e^{-\useconstant{c:large_deviation}\rho}\right]^{|H|}e^{-t} \leq e^{-t}.
\end{equation}

\par Finally, combining Equations~\eqref{eq:bound_C},~\eqref{eq:bound_B},~\eqref{eq:bound_without_pair} and~\eqref{eq:I_full}, we obtain
\begin{equation}
\begin{split}
\PP[A] & \leq \PP[B]+ \PP[C] + \PP\left[\begin{array}{cl}
\text{there is more than $\useconstant{c:large_deviation}\rho |H|+t$} \\ \text{particles from $\eta$} \\ \text{inside $H$ at time zero}
\end{array}\right] \\
& \quad +(\useconstant{c:large_deviation}\rho |H|+t)\PP\left[\begin{array}{cl}
\text{a particle that is inside $I$} \\ \text{at time $t$ does not meet} \\ \text{any of its pairs}, B^{c} \cap C^{c}
\end{array}\right] \\
& \leq \useconstant{C:coupling_zrp} t (|I|+t)e^{-\useconstant{C:coupling_zrp}^{-1}\epsilon^{2}t^{\sfrac{1}{4}}},
\end{split}
\end{equation}
for some large enough $\useconstant{C:coupling_zrp}$ and all $t$ large. This finishes the proof of Proposition~\ref{prop:coupling}.

\section{The infection process}\label{sec:infection}
~
\par Now that we have constructed the decoupling for the zero range process, we consider the infection process. We first precisely define our model and prove some preliminary results.

\par Given the initial configuration $\eta_{0}$ for the zero range process with density $\rho$, define the set of infected particles $\xi_{0}$ as
\begin{equation}
\xi_{0}(x)=\begin{cases}
\eta_{0}(x), \,\, \text{if } x \leq 0,\\
0, \,\,\,\, \text{if } x>0.
\end{cases}
\end{equation}
Let $\zeta_{0}=\eta_{0}-\xi_{0}$ be the collection of healthy particles.

\par As for the evolution of the process, $\xi+\zeta$ evolves as a zero range process with rate function $g$. Besides, a healthy particle becomes immediately infected when it shares a site with some already infected particle.

\par Observe that this construction satisfies
\begin{equation}
\min\{\xi_{t}(x), \zeta_{t}(x)\}=0\,\, \text{for all } x \in \ZZ \text{ and } t \geq 0.
\end{equation}
This means that, in any non-empty site, either all particles are healthy or all particles are infected.

\par Define the front of the infection wave as
\begin{equation}
r_{t}=\sup\{x:\xi_{t}(x)>0\}.
\end{equation}

\par We now prove some preliminary lemmas regarding the behavior of $r_{t}$. These estimates are uniform over compact sets of positive densities. For the remaining of the section, we fix $0 < \rho_{-} < \rho_{+} < \infty$.
%%%%%
\newconstant{c:bound_A}
\newconstant{c:quadratic_displacement}
%%%%%

\par First, we prove a crude estimate saying that it is unlikely for $r_{t}$ to travel a distance of order $t^{2}$ in time $t$. Let $A(t,t)$ be as in Lemma~\ref{lemma:many_particles} and observe that there exists a positive constant such that $A(t,t) \leq \useconstant{c:bound_A}t$, for $t \geq 1$ and $\rho \in [\rho_{-}, \rho_{+}]$.

\begin{lemma}\label{lemma:infection_1}
There exists a positive constant $\useconstant{c:quadratic_displacement}$ such that
\begin{equation}
\PP_{\rho}\left[\sup_{0 \leq s \leq t}\{r_{s}\}-r_{0} \geq \useconstant{c:bound_A}t^{2}\right] \leq \useconstant{c:quadratic_displacement}e^{-\useconstant{c:quadratic_displacement}^{-1}t}, 
\end{equation}
for all $t \geq 0$ and all $\rho \in [\rho_{-}, \rho_{+}]$.
\end{lemma}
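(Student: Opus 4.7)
The plan is to track the running supremum $R_s := \sup_{0 \le s' \le s} r_{s'}$ of the infection front rather than $r_s$ itself. The key observation is that $r_s$ can increase by at most one unit at a time, and only when an infected particle sitting at $r_s$ performs a rightward jump: every site strictly to the right of $r_s$ carries only healthy particles, so nothing happening there creates new infected occupancies past $r_s$. Consequently $R_s$ jumps upward only when $r_{s^-} = R_{s^-}$ and such an advancement occurs, and every such up-jump is exactly $+1$. Writing $N_t^R$ for the number of up-jumps of $R$ in $[0, t]$, one has $\sup_{0 \le s \le t} r_s - r_0 = N_t^R$, and the task reduces to controlling $N_t^R$.

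Fix $M := \useconstant{c:bound_A} t^2$, let $C_A$ be any constant with $A(t,t) \le C_A t$ for $t \ge 1$, set $c_0 := \Gamma_{+} C_A / 2$, and take $\useconstant{c:bound_A}$ large enough that $\useconstant{c:bound_A} \ge \max(C_A, 2e c_0)$. Introduce the good event
\[
G := \{r_0 \ge -t\} \cap \bigl\{\eta_s(x) < A(t,t) \text{ for every } x \in [-t, M] \text{ and every } s \in [0, t]\bigr\}.
\]
First I would bound $\PP_{\rho}[G^c]$. The inclusion $\{r_0 < -t\} \subseteq \{\eta_0(y) = 0 \text{ for all } y \in [-\lfloor t\rfloor, 0]\}$, together with the product form of $\mu_{\rho}$ and the uniform estimate $\mu_{\rho}(0) < 1$ on $[\rho_{-}, \rho_{+}]$, gives $\PP_{\rho}[r_0 < -t] \le e^{-c(\rho) t}$. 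Lemma~\ref{lemma:many_particles} (with $u = t$) combined with translation invariance controls the probability that any fixed site in $[-t, M]$ exceeds $A(t,t)$ at some moment in $[0, t]$ by $\useconstant{c:many_particles}(t+1) e^{-\useconstant{c:many_particles}^{-1} t}$; a union bound over the $O(t^2)$ relevant sites produces $\PP_{\rho}[G^c] \le C t^3 e^{-c t}$.

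On the good event $G$, while $R_s$ has not yet reached $M$ one has $R_s \in [r_0, r_0 + M - 1] \subseteq [-t, M-1]$, and hence $\eta_s(R_s) < A(t,t) \le C_A t$ for every $s \le t$. Reading off the graphical construction of Subsection~\ref{subsec:graphical}, the rate at which $R$ can advance at time $s$ is therefore at most $g(\eta_s(R_s))/2 \le \Gamma_{+} A(t,t)/2 \le c_0 t$. The counting process $(N_s^R)_{s \le t}$, stopped when it first reaches $M$, is thus stochastically dominated on $G$ by a Poisson process of rate $c_0 t$, whence
\[
\PP_{\rho}[N_t^R \ge M,\, G] \le \PP[\mathrm{Poisson}(c_0 t^2) \ge \useconstant{c:bound_A} t^2] \le (e c_0 / \useconstant{c:bound_A})^{\useconstant{c:bound_A} t^2} \le e^{-c t^2}.
\]
Combining this with the bound on $\PP_{\rho}[G^c]$ and absorbing $e^{-c t^2}$ into $C t^3 e^{-c t}$ finishes the argument. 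The conceptual obstacle is the unboundedness of the occupation numbers in the zero range process (which is what forces the $t^2$ scale in the lemma); this is precisely what Lemma~\ref{lemma:many_particles} handles, while the running-maximum device sidesteps any need to control how far $r_s$ may wander to the left of $r_0$. For small $t$ the claimed bound is vacuous, so one can assume $t$ large throughout.
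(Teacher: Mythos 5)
Your proposal follows essentially the same route as the paper's own proof: localize the front to a region of width $O(t^2)$, invoke Lemma~\ref{lemma:many_particles} to bound the occupation numbers there, and dominate the number of rightward advancements of the front by a Poisson variable whose tail gives the required decay. The paper introduces the random interval $J=[r_0, r_0+\useconstant{c:bound_A}t^2]$ and applies a union bound directly; you instead condition on the very likely event $\{r_0\ge -t\}$ so that the interval $[-t,\useconstant{c:bound_A}t^2]$ is deterministic, and you track the running supremum $R_s$ explicitly so that the Poisson domination of $N^R_t$ is airtight. You also note that $\useconstant{c:bound_A}$ must be taken at least of order $e\,\Gamma_+$ times the constant bounding $A(t,t)/t$ for the Poisson tail $(e\lambda/k)^k$ to decay, which the paper leaves implicit in its choice of $\useconstant{c:bound_A}$. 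These are helpful clarifications of the same argument rather than a different proof.
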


\begin{proof}
By increasing the value of the constant $\useconstant{c:quadratic_displacement}$, we may assume $t \geq 1$.

Write $J=[r_{0}, r_{0}+\useconstant{c:bound_A}t^{2}]$ and observe that, in the event of the statement, either there exists $x \in J$ such that
\begin{equation}
\eta_{s}(x) \geq A(t,t), \,\, \text{for some } s \leq t,
\end{equation}
or this does not happen and, in order for the infection to cross $J$, it must travel through a region that is not dense in particles. This allows us to bound the number of jumps the front of the wave infection can make. Let $X \sim \poisson(\Gamma_{+} t A(t,t))$, we obtain
\begin{equation}
\begin{split}
\PP_{\rho}\left[\sup_{0 \leq s \leq t}\{r_{s}\}-r_{0} \geq \useconstant{c:bound_A}t^{2}\right] & \leq \useconstant{c:bound_A}t^{2}\PP_{\rho}\left[\begin{array}{cl}
\eta_{s}(0) \geq A(t, t), \\ \text{for some } s \in [0,t]
\end{array}\right] +\PP\left[X \geq \useconstant{c:bound_A}t^{2}\right] \\
& \leq \useconstant{c:quadratic_displacement}(t^{2}+1)e^{-\useconstant{c:many_particles}^{-1}t}+e^{-t} \leq \useconstant{c:quadratic_displacement}e^{-\useconstant{c:quadratic_displacement}^{-1}t},
\end{split}
\end{equation}
and the statement follows.
\end{proof}

\par Our next lemma is similar to the last one, but we consider a slightly different event, illustrated in Figure~\ref{fig:lemma_large_displacement_to_the_left_2}.

\begin{figure}[h]\label{fig:lemma_large_displacement_to_the_left_2}
\begin{center}
\begin{tikzpicture}

% Infection
\filldraw[black!30!, draw=black!30!] (0,0) .. controls (-0.5,0.2) and (-0.7,0.3) .. (-0.9,0.6)--(-3,0.6)--(-3,0);
\filldraw[black!30!, draw=black!30!] (-0.9,0.6) .. controls (-0.9,0.6) and (-1.1,0.9) .. (-1.2,0.9)--(-3,0.9)--(-3,0.6);
\filldraw[black!30!, draw=black!30!] (-1.2,0.9) .. controls (-1.3,0.9) and (-1.5,1.1) .. (-1.6,1.2)--(-3,1.2)--(-3,0.9);
\filldraw[black!30!, draw=black!30!] (-1.6,1.2) .. controls (-1.8,1.4) and (-1.8,1.4) .. (-1.6,1.5)--(-3,1.5)--(-3,1.2);
\filldraw[black!30!, draw=black!30!] (-1.6,1.5) .. controls (-1.4,1.6) and (-1.3,1.8) .. (-1.1,1.9)--(-3,1.9)--(-3,1.5);
\filldraw[black!30!, draw=black!30!] (-1.1,1.9) .. controls (-0.9,2) and (-0.6,2.3) .. (-0.6,2.3)--(-3,2.3)--(-3,1.9);
\filldraw[black!30!, draw=black!30!] (-0.6,2.3) .. controls (-0.5,2.4) and (-0.1,2.4) .. (-0.1, 2.4)--(-3,2.4)--(-3,2.3);
\filldraw[black!30!, draw=black!30!] (-0.1,2.4) .. controls (0.3,2.4) and (0.3,2.7) .. (0.5, 2.7)--(-3,2.7)--(-3,2.4);
\shade[left color=white,right color=black!30!] (-3.1,0) rectangle (-2,2.7);
\fill[white] (-3.1,2.7) rectangle (1,2.8);

% Grids
\draw[<->](-3.5,0)--(3,0);
\draw[->](0,0)--(0,3);

% Labels
\draw (0,-0.2)--(0,0);
\draw (-1.75,-0.2)--(-1.75,0);
\node[below] at (-1.75,-0.1) {$\inf_{0 \leq s \leq t}\{r_{s}\}$};\node[below] at (0,-0.1) {$r_{0}$};

\draw[dashed](-1.75,0)--(-1.75,1.4);

\draw (-0.1,2.7)--(0.1,2.7);
\node[right] at (0,2.7) {$t$};

\end{tikzpicture}
\caption{The infimum considered in Lemma~\ref{lemma:infection_3}.}
\end{center}
\end{figure}
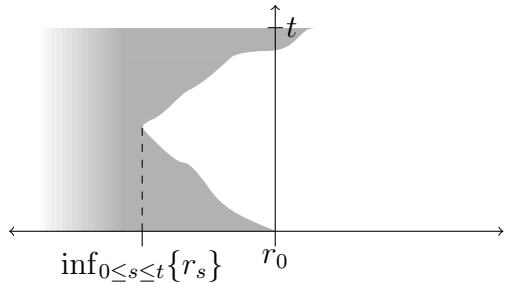

\begin{lemma}\label{lemma:infection_3}
For any $t \geq 0$,
\begin{equation}
\PP_{\rho}\left[r_{0}-\inf_{0 \leq s \leq t}\{r_{s}\}\geq (2\Gamma_{+}+1)t\right]\leq e^{-t}.
\end{equation}
\end{lemma}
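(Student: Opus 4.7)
The plan is to exploit the fact that an infected particle stays infected forever, so that the front $r_s$ is bounded below by the trajectory of any particle chosen to be infected at time zero. Since $\rho > 0$, almost surely there is an infected particle at the site $r_0$ at time $0$; fix such a particle $p$ and let $X_s^p$ denote its position at time $s$. Because $p$ remains infected for every $s \geq 0$, we have $\xi_s(X_s^p) \geq 1$, hence $r_s \geq X_s^p$, and therefore
\[
r_0 - \inf_{0 \leq s \leq t} r_s \;\leq\; X_0^p - \inf_{0 \leq s \leq t} X_s^p \;\leq\; J_t,
\]
where $J_t$ denotes the total number of jumps performed by $p$ in $[0,t]$.

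Next I would dominate $J_t$ stochastically by a $\poisson(\Gamma_+ t)$ random variable using the graphical construction of Subsection~\ref{subsec:graphical}. At every time $s$ the particle $p$ occupies a unique site--height pair $(x_s^p, h_s^p)$, and only a mark of $\mathcal{P}(x_s^p)$ with height coordinate $h_s^p$ can trigger a jump of $p$. Integrating out $u$ and $h$ from the intensity $\Gamma_+\,\lambda\otimes\mu\otimes\lambda\otimes\tfrac{1}{2}(\delta_{-1}+\delta_{+1})$, the counting process of marks relevant to $p$ has predictable intensity $\Gamma_+$, and since jumps of $p$ are a subset of these marks, the standard compensator comparison yields that $J_t$ is stochastically dominated by $\poisson(\Gamma_+ t)$.

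To conclude, I would apply a Chernoff bound. With $\mu = \Gamma_+ t$ and $k = (2\Gamma_+ + 1)t$,
\[
\PP\!\left[\poisson(\Gamma_+ t) \geq (2\Gamma_+ + 1)t\right] \;\leq\; \exp\!\Bigl((\Gamma_+ + 1)t - (2\Gamma_+ + 1)t\log(2 + 1/\Gamma_+)\Bigr),
\]
and using $\Gamma_+ \geq 1$ from~\eqref{eq:g_condition}, an elementary monotonicity check shows that $(2\Gamma_+ + 1)\log(2 + 1/\Gamma_+) \geq \Gamma_+ + 2$, so the exponent is bounded by $-t$ and the desired $e^{-t}$ bound follows. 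The main subtle point is in the middle step: the height $h_s^p$ can change even when $p$ does not jump (for instance, when a particle below $p$ in its stack jumps away), so one must verify that the relevant counting process indeed has predictable rate exactly $\Gamma_+$. This verification is standard but is the step I would write out most carefully; everything else is a routine tail estimate for a single tagged particle.
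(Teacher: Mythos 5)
Your proof is correct and takes essentially the same approach as the paper: track the infected particle initially at $r_0$, observe that $r_s$ stays above its trajectory so the backward displacement of the front is bounded by the particle's jump count, dominate that count stochastically by $\poisson(\Gamma_+ t)$ via the graphical construction, and finish with a Chernoff bound. You simply make explicit the domination argument and the elementary inequality $(2\Gamma_++1)\log(2+1/\Gamma_+)\geq \Gamma_++2$ for $\Gamma_+\geq 1$, both of which the paper leaves implicit.
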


\begin{proof}
Simply notice that, on the event above, it is necessary that the first particle on $r_{0}$ jumps more than $(2\Gamma_{+}+1)t$ times before time $t$. This gives the bound
\begin{equation}
\PP_{\rho}\left[r_{0}-\inf_{0 \leq s \leq t}\{r_{s}\}\geq (2\Gamma_{+}+1)t\right] \leq \PP_{\rho}[X \geq (2\Gamma_{+}+1)t] \leq e^{-t},
\end{equation}
where $X \sim \poisson(\Gamma_{+}t)$, and the proof is complete.
\end{proof}

%%%%%
\newconstant{c:large_backtrack}
%%%%%

\par We can also bound the probability that the front of the infection has a big displacement to the right.
\begin{lemma}\label{lemma:infection_2}
There exists a positive constant $\useconstant{c:large_backtrack}$ such that
\begin{equation}
\PP_{\rho}\left[\sup_{s \leq t}\{r_{s}\}-r_{t}\geq (2\Gamma_{+}+1)t\right]\leq \useconstant{c:large_backtrack}e^{-\useconstant{c:large_backtrack}^{-1}t},
\end{equation}
for all $\rho \in [\rho_{-}, \rho_{+}]$.
\end{lemma}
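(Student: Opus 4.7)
The plan is to apply the strong Markov property at the first time the supremum of $r$ is attained, reduce the resulting problem to Lemma~\ref{lemma:infection_3} applied to the shifted process, and finally take a union bound over the possible values of the supremum (whose range is capped by Lemma~\ref{lemma:infection_1}).

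In detail, for each integer $m$ let $\tau_m := \inf\{s \geq 0 : r_s = m\}$, which is a stopping time, and set $M := \sup_{0 \leq s \leq t} r_s$. Call $E$ the event in the statement. On $E \cap \{M = m\}$ we have $\tau_m \leq t$, $r_{\tau_m} = m$, and $r_t \leq m - (2\Gamma_+ + 1)t$, so that $r_{\tau_m} - r_t \geq (2\Gamma_+ + 1)t$. Writing $\tilde{r}_u := r_{\tau_m + u}$ for the shifted process, since $t - \tau_m \leq t$ we have
\[
r_{\tau_m} - r_t \;=\; \tilde{r}_0 - \tilde{r}_{t - \tau_m} \;\leq\; \tilde{r}_0 - \inf_{0 \leq u \leq t} \tilde{r}_u.
\]

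Next, I will invoke Lemma~\ref{lemma:infection_3}. Its proof only requires that the topmost particle at the initial front performs no more than $\poisson(\Gamma_+ t)$ left jumps in time $t$, a bound coming from the graphical construction that is insensitive to the initial distribution of $\eta$. Applying the strong Markov property at $\tau_m$ and using Lemma~\ref{lemma:infection_3} for the shifted process, I obtain
\[
\PP\bigl[\tau_m \leq t,\ r_{\tau_m} - r_t \geq (2\Gamma_+ + 1)t\bigr] \;\leq\; e^{-t},
\]
uniformly in $m$.

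To finish, I union-bound over $m$. By Lemma~\ref{lemma:infection_1}, with probability at least $1 - \useconstant{c:quadratic_displacement} e^{-\useconstant{c:quadratic_displacement}^{-1} t}$ one has $M - r_0 \leq \useconstant{c:bound_A} t^2$, so on this good event $M$ takes at most $\lfloor \useconstant{c:bound_A} t^2 \rfloor + 1$ distinct integer values (given $r_0$). Summing the previous bound over these values yields
\[
\PP[E] \;\leq\; \bigl(\useconstant{c:bound_A} t^2 + 1\bigr)\, e^{-t} + \useconstant{c:quadratic_displacement} e^{-\useconstant{c:quadratic_displacement}^{-1} t} \;\leq\; \useconstant{c:large_backtrack} e^{-\useconstant{c:large_backtrack}^{-1} t}
\]
for a suitable constant $\useconstant{c:large_backtrack}$, as desired. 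The only conceptual point to verify is that Lemma~\ref{lemma:infection_3} transfers cleanly to arbitrary initial configurations (needed for the shifted process after strong Markov); inspection of its proof makes this immediate, so no serious obstacle is anticipated.
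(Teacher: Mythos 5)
Your proof is correct, and it takes a genuinely different route from the paper's. The paper's argument first restricts attention to the event $\{r_s \in I \text{ for all } s\leq t\}$ (paying for the complement via Lemmas~\ref{lemma:infection_1} and~\ref{lemma:infection_3} and a bound on $r_0$), then observes that on the remaining event the particle sitting at the front at the time the supremum is attained must perform more than $(2\Gamma_++1)t$ jumps; it controls this by a union bound over all particles that could be involved, using concentration of the number of particles in a fixed window $H$ and a rough estimate on particles entering from outside $H$. Your argument instead unions over the \emph{value} $m$ of the supremum, invokes the strong Markov property at the hitting time $\tau_m$, and then re-applies Lemma~\ref{lemma:infection_3} to the time-shifted process. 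This is cleaner in that it reuses the already-proved Lemma~\ref{lemma:infection_3} (whose proof, as you correctly note, is a pure Poisson bound insensitive to the law of $\eta_0$, so it applies after conditioning on $\mathcal{F}_{\tau_m}$), and it avoids the paper's need to also control $\inf_{s\le t}r_s$ and the set of particles crossing $H$. The tradeoff is that it leans on the strong Markov property for the graphical construction rather than a plain union bound over particles. One small point worth making explicit in a polished write-up: when summing $\PP[\tau_m\le t,\,r_{\tau_m}-r_t\ge (2\Gamma_++1)t]\le e^{-t}$ over $m\in\{r_0,\dots,r_0+\lfloor \useconstant{c:bound_A}t^2\rfloor\}$, the index set is random because it depends on $r_0$; this is harmless because the bound $e^{-t}$ holds conditionally on $\mathcal{F}_{\tau_m}\supset\sigma(r_0)$ (the Poisson jump clocks after $\tau_m$ are independent of $\mathcal{F}_{\tau_m}$), so one can condition on $r_0$, sum over the $\lfloor\useconstant{c:bound_A}t^2\rfloor+1$ possible offsets, and then average out $r_0$, recovering exactly your stated bound.
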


\par Figure~\ref{fig:lemma_large_displacement_to_the_left} helps to illustrate the event in Lemma \ref{lemma:infection_2}.

\begin{figure}[h]\label{fig:lemma_large_displacement_to_the_left}
\begin{center}
\begin{tikzpicture}

% Infection
\filldraw[black!30!, draw=black!30!] (0,0) .. controls (0.5,0.2) and (0.7,0.3) .. (0.9,0.6)--(-3,0.6)--(-3,0);
\filldraw[black!30!, draw=black!30!] (0.9,0.6) .. controls (0.9,0.6) and (1.1,0.9) .. (1.2,0.9)--(-3,0.9)--(-3,0.6);
\filldraw[black!30!, draw=black!30!] (1.2,0.9) .. controls (1.3,0.9) and (1.5,1.1) .. (1.6,1.2)--(-3,1.2)--(-3,0.9);
\filldraw[black!30!, draw=black!30!] (1.6,1.2) .. controls (1.8,1.4) and (1.8,1.4) .. (1.6,1.5)--(-3,1.5)--(-3,1.2);
\filldraw[black!30!, draw=black!30!] (1.6,1.5) .. controls (1.4,1.6) and (1.3,1.8) .. (1.1,1.9)--(-3,1.9)--(-3,1.5);
\filldraw[black!30!, draw=black!30!] (1.1,1.9) .. controls (0.9,2) and (0.6,2.3) .. (0.6,2.3)--(-3,2.3)--(-3,1.9);
\filldraw[black!30!, draw=black!30!] (0.6,2.3) .. controls (0.5,2.4) and (0.1,2.4) .. (0.1, 2.4)--(-3,2.4)--(-3,2.3);
\filldraw[black!30!, draw=black!30!] (0.1,2.4) .. controls (-0.3,2.4) and (-0.3,2.7) .. (-0.5, 2.7)--(-3,2.7)--(-3,2.4);
\shade[left color=white,right color=black!30!] (-3.1,0) rectangle (-2,2.7);
\fill[white] (-3.1,2.7) rectangle (1,2.8);

% Grids
\draw[<->](-3,0)--(3,0);
\draw[->](0,0)--(0,3);

% Labels
\draw (-0.5,-0.2)--(-0.5,0);
\draw (1.75,-0.2)--(1.75,0);
\node[below] at (1.75,-0.1) {$\sup_{0 \leq s \leq t}\{r_{s}\}$};
\node[below] at (-0.5,-0.1) {$r_{t}$};

\draw[dashed](-0.5,0)--(-0.5,2.7);
\draw[dashed](1.75,0)--(1.75,1.4);

\draw (-0.1,2.7)--(0.1,2.7);
\node[right] at (0,2.7) {$t$};

\end{tikzpicture}
\caption{The supremum in the event considered in Lemma~\ref{lemma:infection_2}.}
\end{center}
\end{figure}
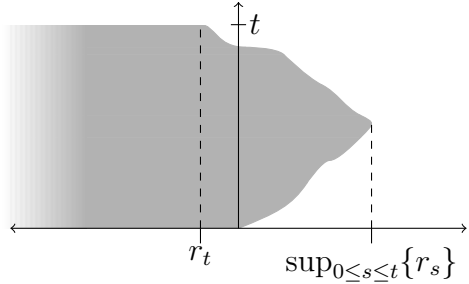

\begin{proof}
Let $B$ denote the event in the statement of the lemma, write $I=[-(2\Gamma_{+}+2)t, \useconstant{c:bound_A}t^{2}]$ and notice that
\begin{equation}\label{eq:bound_1}
\begin{split}
\PP_{\rho}[B] & \leq \PP_{\rho}\left[r_{0} \notin [-t,0]\right]+ \PP_{\rho}\left[\inf_{s \leq t}r_{s} \leq -(2\Gamma_{+}+2)t, r_{0} \geq -t\right] \\
& \qquad + \PP_{\rho}\left[ \sup_{s \leq t} r_{s} \geq \useconstant{c:bound_A}t^{2} \right]+\PP_{\rho}\left[B, r_{s} \in I, \text{ for all } s \leq t\right].
\end{split}
\end{equation}

Combining Lemmas~\ref{lemma:infection_1} and~\ref{lemma:infection_3}, we easily obtain that
\begin{equation}
\PP_{\rho}[B] \leq ce^{-c^{-1}t}+\PP_{\rho}\left[B, r_{s} \in I, \text{ for all } s \leq t\right].
\end{equation}

To bound the last probability of the last event above, observe that, if it holds, then either there exists some particle from outside $H=[-(5\Gamma_{+}+2)t, \useconstant{c:bound_A}t^{2}+3\Gamma_{+}t]$ enters the interval $I$ before time $t$, or some particle that starts inside $H$ jumps many times before time $t$. Using the same strategy as in Lemma~\ref{lemma:many_particles}, concentration of the number of particles inside $H$ and the fact that each particle jumps at most $\poisson(\Gamma_{+}t)$ times before time $t$ we obtain
\begin{equation}
\begin{split}
\PP_{\rho}\left[B, r_{s} \in I, \text{ for all } s \leq t\right] & \leq \PP_{\rho}\left[\begin{array}{cl}
\text{some particle that starts outside} \\ I \text{ enters } H \text{ before time } t 
\end{array}\right] \\
& \qquad + \PP_{\rho}\left[\begin{array}{cl}
\text{some particle inside } I \text{ jumps more} \\ \text{than } (2\Gamma_{+}+1)t \text{ times before time } t 
\end{array}\right] \\
& \leq c(t^{2}+t+1)e^{-c^{-1}t}.
\end{split}
\end{equation}
Combining the last expression above with~\eqref{eq:bound_1} completes the proof.
\end{proof}

\begin{comment}

%%%%%
\newbigconstant{C:large_backtrack_2}
%%%%%

\begin{cor}\label{cor:large_displacement_to_the_left}
Given $0 < \rho_{-} \leq \rho_{+}$, there exists a positive constant $\usebigconstant{C:large_backtrack_2}=\usebigconstant{C:large_backtrack_2}(\rho_{-}, \rho_{+})$ such that
\begin{equation}
\PP_{\rho}\left[\inf_{0 \leq s \leq t}r_{s}\leq -2(\Gamma_{+}+1)t\right] \leq \usebigconstant{C:large_backtrack_2}e^{-\usebigconstant{C:large_backtrack_2}^{-1}t},
\end{equation}
for all $\rho \in (\rho_{-}, \rho_{+})$.
\end{cor}

\begin{proof}
We use Lemma~\ref{lemma:large_displacement_to_the_left_2} and estimate
\begin{equation}
\begin{split}
\PP_{\rho}\left[\inf_{0 \leq s \leq t}r_{s}\leq -2(\Gamma_{+}+1)t\right] & \leq \PP_{\rho}[r_{0} \leq t]+\PP_{\rho}\left[r_{0}-\inf_{0 \leq s \leq t}r_{s}\geq (2\Gamma_{+}+1)t\right] \\
& \leq \PP_{\rho}[\xi_{0}(0)=0]^{\lfloor t \rfloor}+e^{-t},
\end{split}
\end{equation}
and the statement follows.
\end{proof}

\begin{remark}
Observe that the last estimate above does not work for values of $\rho$ arbitrarily close to zero, since we need to make sure we can find a particle in a large interval close to the origin.
\end{remark}

\end{comment}

\par To finish this section, we introduce the space-time translated infection process. Fix $m =(x,t) \in \ZZ \times [0, \infty)$ and define the collection of infected particles as
\begin{equation}
\xi_{0}^{m}(y)=\begin{cases}
\eta_{t}(y), \,\, \text{if } y \leq x,\\
0, \,\,\,\, \text{if } y>x.
\end{cases}
\end{equation}
As before, $\zeta_{0}^{m}=\eta_{t}-\xi_{0}^{m}$ denotes the collection of healthy particles. The evolution of the infection is the same, and the front of the infection wave is
\begin{equation}
r_{s}(m)=\sup\{y \in \ZZ: \xi_{s}^{m}(y)>0\}, \qquad s \geq t.
\end{equation}

\section{Finite velocity}\label{sec:finite_vel}
~
\par We now begin a more in depth study of our infection process. This section aims to prove Theorem~\ref{teo:finite_velocity}. We split the discussion in three subsections. The first subsection contains some notation we will need to develop our multiscale renormalisation, which can be found in Subsection~\ref{subsec:renorm}. Subsection~\ref{subsec:finite_vel} contains the proof of Theorem~\ref{teo:finite_velocity}.

% \par We split our discussion in three subsections. First, we introduce the necessary notation for the multiscale renormalisation argument, completed in the second subsection. The third subsection is devoted to the proof of Theorem~\ref{teo:ldp}.

\subsection{The box notation}\label{subsec:box_notation}
~
\par We begin by introducing the sequence of scales $(L_{k})_{k \in \NN_{0}}$ as
\begin{equation}\label{eq:scales}
L_{0}=100 \qquad \text{and} \qquad L_{k+1}=L_{k}^{3}.
\end{equation}
We will also write $\ell_{k}=\lfloor L_{k}^{\sfrac{1}{2}} \rfloor$.

\par For $k \in \NN_{0}$, define the box
\begin{equation}\label{eq:renormalisation_box}
B_{k}=[-\ell_{k} L_{k}^{2},\ell_{k} L_{k}^{2}] \times [0,L_{k}],
\end{equation}
and, for $m \in \ZZ \times L_{k} \NN_{0}$, let $B_{k}(m)$ denote the translated box $B_{k}(m)=m+B_{k}$.

\par Define also the sequence of velocities
\begin{equation}\label{eq:velocities}
v_{0}=v>0 \qquad \text{and} \qquad v_{k+1}=v_{k}+\frac{1}{(k+1)^{2}},
\end{equation}
where $v$ is a positive value that will be chosen afterwards to be sufficiently large.

\par We want to bound the probability of the events where $r_{t}$ travels fast to the right. However, the continuous time nature of the process implies that events of this form do not have a bounded support. Therefore, we will introduce a well chosen event that treats the possibility that either $r_{t}$ leaves the box $B_{k}$ before time $L_{k}$ or it is far to the right at time $L_{k}$. For $k \in \NN_{0}$, define the set
\begin{equation}\label{eq:boundary_subset}
R_{k} =\Big(\{ \ell_{k} L_{k}^{2}\} \times [0, L_{k}] \Big) \cup \Big( [v_{k}L_{k}, \ell_{k} L_{k}^{2}] \times \{L_{k}\} \Big).
\end{equation}
Figure~\ref{fig:box} contains a representation of $B_{k}$ and $R_{k}$. For $m \in \ZZ \times L_{k} \NN_{0}$, define $R_{k}(m)=m+R_{k}$.

\begin{figure}\label{fig:box}
\begin{center}
\begin{tikzpicture}

% Grids
\draw[<->] (-4,0)--(4,0);
\draw[->](0,0)--(0,3);

% Box
\draw[thick](-3,0) rectangle (3,2);

\draw[->](0,0)--(3,3);
\node[right] at (3,3){$x=v_{k}t$};

\draw[line width=3pt](2,2)--(3,2)--(3,0);
\node[right] at (3,1){$R_{k}$};

\draw (0,0) to [curve through={(0,0) .. (-0.3, 0.1) .. (0, 0.6) .. (1.2, 0.8) .. (1.5, 1.1)}] (3,1.7);

\end{tikzpicture}
\caption{The box $B_{k}$, the set $R_{k}$ and the event $E_{k}$.}
\end{center}
\end{figure}
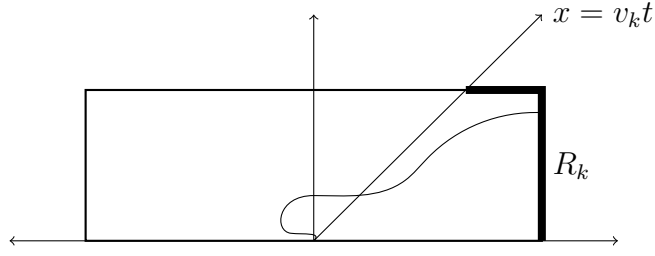

\par The event we consider is defined as follows. For $m =(x, sL_{k}) \in \ZZ \times L_{k} \NN_{0}$, consider
\begin{equation}\label{eq:renorm_event}
E_{k}(m)=\left\{\begin{array}{ll}
\text{$r_{0}(m)=x$ and $(r_{t}(m))_{t>0}$ first touches} \\ \text{the boundary of $B_{k}(m)$ in $R_{k}(m)$}
\end{array}\right\}.
\end{equation}
See Figure~\ref{fig:box} for a representation of the event $E_{k}$. Observe that the events $E_{k}(m)$ are non-decreasing and have support in $B_{k}(m)$. When $m=(0,0)$, we will omit it and denote $E_{k}(0,0)$ simply by $E_{k}$.

\par We introduce the sequence of densities. Fix $\rho_{0} >0$ and define
\begin{equation}\label{eq:density_scales}
\rho_{k}=\rho_{k+1}(1+L_{k}^{-\sfrac{1}{16}}).
\end{equation}
The sequence $(\rho_{k})_{k \in \NN_{0}}$ is decreasing and $\rho_{\infty}=\lim \rho_{k}$ is positive.

\par Define, for $m \in \ZZ \times L_{k}\NN_{0}$, the probability of the bad events as
\begin{equation}\label{eq:p_k}
p_{k}=\PP_{\rho_{k}}[E_{k}(m)].
\end{equation}
By translation invariance, the probability above does not depend on the value of~$m$.

\begin{remark}\label{remark:velocity_dependence}
Even though $p_{k}$ also depends on the value of $v_{k}$ which is determined by the fixed value of $v_{0}=v$, we omit these dependencies.
\end{remark}

%%%%%
\newconstant{c:bad_support}
%%%%%

\par We also introduce the event
\begin{equation}\label{eq:infection_travels_far}
D_{k}(m)=\{r_{t}(m) \in B_{k}(m), \text{ for all } t \in [0,L_{k}]\}.
\end{equation}
Lemmas~\ref{lemma:infection_1} and~\ref{lemma:infection_3} imply that, given $\rho_{-}<\rho_{+}$, there exists $\useconstant{c:bad_support}>0$ such that
\begin{equation}\label{eq:probability_bad_support}
\PP_{\rho}[D_{k}^{c}] \leq \useconstant{c:bad_support}e^{-\useconstant{c:bad_support}^{-1}L_{k}},
\end{equation}
for all $k \in \NN_{0}$ and $\rho \in [\rho_{-}, \rho_{+}]$.

%%%%%
\newconstant{c:bound_M}
%%%%%

\par Finally, let $M_{k}$ denote the set of values $m$ for which the translated box $B_{k}(m)$ still intersects the larger box $B_{k+1}$, more precisely,
\begin{equation}\label{eq:M_k}
M_{k}=\{m \in \ZZ \times L_{k} \NN_{0}: B_{k}(m) \cap B_{k+1} \neq \emptyset\},
\end{equation}
and observe that
\begin{equation}
|M_{k}| \leq \useconstant{c:bound_M}L_{k+1}^{4}.
\end{equation}

\subsection{Estimates on $p_{k}$}\label{subsec:renorm}
~
\par Our next step is to prove that $p_{k}$ decreases very fast when $v_{0}$ is chosen large enough. This is done in three lemmas, proved in this subsection.

%%%%%
\newconstant{c:renorm}
%%%%%

\par The first lemma we prove is a recursive inequality that relates $p_{k}$ to $p_{k+1}$.
\begin{lemma}\label{lemma:renorm_1}
There exists $k_{0}$ such that, for all choice of $v_{0}$ and $k \geq k_{0}$,
\begin{equation}
p_{k+1}\leq \useconstant{c:renorm}L_{k+1}^{28}[p_{k}^{4}+e^{-\useconstant{c:renorm}^{-1}\rho_{\infty}^{2}L_{k}^{\sfrac{1}{8}}}].
\end{equation}
\end{lemma}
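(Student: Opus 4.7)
The plan is a multiscale cascade: show that the event \(E_{k+1}\) forces four scale-\(k\) events \(E_k(m_i)\) to occur at positions \(m_i\) well separated in time, and then factorise the joint probability through iterated use of the vertical decoupling (Theorem~\ref{teo:vertical_decoupling}).

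\textbf{Geometric cascade.} On \(E_{k+1}\cap D_{k+1}\) the front advances by at least \(v_{k+1}L_{k+1}\) during \([0,L_{k+1}]\) while remaining in \(B_{k+1}\). Partition this interval into \(L_k^2\) windows \([s_j,s_{j+1}]\) of length \(L_k\), and call the \(j\)th window \emph{fast} when \(r_{s_{j+1}} - r_{s_j} > v_k L_k\). Lemma~\ref{lemma:infection_1} caps the displacement inside any single window by \(\useconstant{c:bound_A}L_k^2\) on \(D_{k+1}\), so the overall displacement constraint forces the number of fast windows to be at least
\begin{equation*}
\frac{(v_{k+1}-v_k)L_{k+1}}{\useconstant{c:bound_A}L_k^2 - v_k L_k} \;\geq\; \frac{L_k}{2\useconstant{c:bound_A}(k+1)^2},
\end{equation*}
which exceeds \(8\) once \(k \geq k_0\). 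Select four fast indices \(j_1 < j_2 < j_3 < j_4\) with \(j_{i+1}-j_i \geq 2\) and set \(m_i := (r_{s_{j_i}}, s_{j_i}) \in M_k\). Since adding infected particles only drives the front further right, the restarted front \(r_\cdot(m_i)\) dominates \(r_\cdot\) on \([s_{j_i},\infty)\); as the latter already exceeds \(r_{s_{j_i}} + v_k L_k\) by time \(s_{j_i+1}\), the event \(E_k(m_i)\) occurs for \(i=1,2,3,4\), and the index spacing guarantees that consecutive boxes \(B_k(m_i)\) have vertical distance at least \(L_k\).

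\textbf{Iterated decoupling.} The cascade yields the inclusion
\begin{equation*}
E_{k+1} \subseteq D_{k+1}^c \cup \bigcup_{(m_1,\ldots,m_4)} \bigl(E_k(m_1) \cap \cdots \cap E_k(m_4)\bigr),
\end{equation*}
over tuples in \(M_k^4\) with pairwise vertical distance \(\geq L_k\), of which there are at most \(|M_k|^4 \leq c L_{k+1}^{16}\). For each tuple, enclose every \(B_k(m_i)\) in a square and apply Theorem~\ref{teo:vertical_decoupling} three times, peeling one factor at a time, with sprinkling \(\epsilon := \rho_\infty L_k^{-\sfrac{1}{16}}/3\). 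The joint remaining support can be enclosed in a square of side at most \(L_{k+1}^{\sfrac{5}{2}}\) and the vertical distance separating the peeled-off factor is at most \(L_{k+1}\), so each application produces an error bounded by
\begin{equation*}
c \, L_{k+1} \cdot L_{k+1}^{\sfrac{5}{2}} \cdot \exp\bigl(-c \epsilon^2 L_k^{\sfrac{1}{4}}\bigr) \;\leq\; c L_{k+1}^{4} \exp\bigl(-c \rho_\infty^2 L_k^{\sfrac{1}{8}}\bigr).
\end{equation*}
The three sprinklings lift the density exactly from \(\rho_{k+1}\) to \(\rho_{k+1}(1+L_k^{-\sfrac{1}{16}})=\rho_k\), matching the recursion \eqref{eq:density_scales}; by monotonicity of \(\mathbf{1}_{E_k(m_i)}\) and translation invariance the resulting product factorises as \(p_k^4\). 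Multiplying the per-tuple bound by \(|M_k|^4\), absorbing \(\PP[D_{k+1}^c]\) from \eqref{eq:probability_bad_support} into the exponential error, and collecting polynomial factors yields the claimed recursion with any polynomial prefactor of sufficient size, and in particular \(\useconstant{c:renorm}L_{k+1}^{28}\).

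\textbf{Main obstacle.} The delicate step is the geometric cascade: the argument needs four fast windows that are also \emph{spread out} in time, and this is only possible because the velocity increment \(v_{k+1}-v_k=(k+1)^{-2}\) decays only polynomially, so that \(L_k/(k+1)^2\) is eventually large. A secondary nuisance is that the \(B_k\) are rectangular and must be embedded in larger squares before the decoupling theorem can be invoked; the extra polynomial factors this costs are absorbed into \(L_{k+1}^{28}\).
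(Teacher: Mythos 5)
Your overall architecture (geometric cascade inside $B_{k+1}$ followed by iterated applications of Theorem~\ref{teo:vertical_decoupling} with sprinkling $\sim L_k^{-1/16}$) is the same as the paper's, and the error-term and union-bound bookkeeping is fine up to absorbable polynomial factors. However, the geometric cascade step has a genuine gap.

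You claim that ``Lemma~\ref{lemma:infection_1} caps the displacement inside any single window by $\useconstant{c:bound_A}L_k^2$ on $D_{k+1}$.'' This does not hold: Lemma~\ref{lemma:infection_1} is a probability estimate, not a deterministic cap, and the event $D_{k+1}$ only confines $r_t$ to $B_{k+1}$, whose half-width is $\ell_{k+1}L_{k+1}^2 = L_k^{15/2}$, far larger than $L_k^2$. So on $D_{k+1}$ alone, nothing prevents a single length-$L_k$ window from contributing a displacement of order $L_{k+1}^{5/2}$, which destroys the pigeonhole count for the number of fast windows. The paper repairs exactly this by inserting the scale-$k$ events $D_k(m)$ for every $m\in M_k$: the set inclusion you need is
\begin{equation*}
E_{k+1}\cap D_{k+1} \subseteq \Bigl(\bigcup_{m\in M_k}D_k(m)^c\Bigr)\;\cup\;\bigcup_{(m_1,\dots,m_7)}\bigcap_i E_k(m_i),
\end{equation*}
and it is $D_k(m)$, not $D_{k+1}$ or Lemma~\ref{lemma:infection_1}, that gives the deterministic per-window cap $\ell_kL_k^2=L_k^{5/2}$. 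The additional term $\bigcup_m D_k(m)^c$ then costs $|M_k|\cdot\useconstant{c:bad_support}e^{-\useconstant{c:bad_support}^{-1}L_k}$ by~\eqref{eq:probability_bad_support}, which absorbs harmlessly into the exponential error.

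A second, related slip is the inference ``fast window $\Rightarrow E_k(m_i)$.'' Even if the restarted front ends the window at $r_0(m_i)+v_kL_k$ or beyond, the event $E_k(m_i)$ requires the front to \emph{first} touch $\partial B_k(m_i)$ inside $R_k(m_i)$. The restarted front could exit $B_k(m_i)$ to the left (through $\{-\ell_kL_k^2\}\times[0,L_k]$) and then come back far to the right; in that scenario the net displacement is large yet $E_k(m_i)$ fails. Again, conditioning on $D_k(m_i)$ rules this out, which is why the paper's union decomposition carries the $D_k(m)^c$ term. Once you add that term, both parts of your cascade argument go through, and the rest of your proof (decoupling iteration, choice of $\epsilon$, absorbing $D_{k+1}^c$ and the polynomial prefactors into $\useconstant{c:renorm}L_{k+1}^{28}$) is correct.
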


\begin{proof}
Fix $k_{0} \in \NN_{0}$ such that, for all $k \geq k_{0}$,
\begin{equation}
\frac{1}{6(k+1)^{2}} > \frac{1}{L_{k}^{\sfrac{1}{2}}}.
\end{equation}

We will prove that the sequence of events $(E_{k})_{k \geq 0}$ is cascading: the occurrence of $E_{k+1}$ implies that many $E_{k}(m)$ hold. Fix $k \geq k_{0}$ and assume we are in the event $E_{k+1}\cap D_{k+1}$. We claim that
\begin{equation*}
\begin{array}{cl}
\text{either } D_{k}(m)^{c} \text{ holds for some } m \in M_{k} \text{ or there are} \\
\text{seven elements } m_{i}=(x_{i},s_{i})\in M_{k}, 1 \leq i \leq 7, \text{ with}\\
s_{i} \neq s_{j}, \text{ if } i \neq j, \text{ such that } E_{k}(m) \text{ holds}.
\end{array}
\end{equation*}

The proof follows by contradiction. Assume we are in the event $E_{k+1} \cap D_{k+1}$, that $D_{k}(m)$ holds for all $m \in M_{k}$, and that $E_{k}(m)$ holds for at most six values of $m \in M_{k}$ with different time coordinates.

Observe that, if $E_{k}(m) \cap D_{k}(m)$ holds, $r_{t}(m)$ has a  maximum displacement of $\ell_{k} L_{k}^{2}$ before time $L_{k}$. Thus, we have
\begin{equation}
\begin{split}
r_{L_{k+1}}-r_{0} & =\sum_{j=0}^{L_{k}^{2}-1}r_{L_{k}}(r_{jL_{k}},jL_{k})-r_{jL_{k}} \\
& \leq 6L_{k}^{5/2}+(L_{k}^{2}-6)v_{k}L_{k} \\
& \leq 6L_{k+1}\left(\frac{1}{L_{k}^{\sfrac{1}{2}}}-\frac{1}{6(k+1)^{2}}\right)+L_{k+1}v_{k+1} \\
& < L_{k+1}v_{k+1}.
\end{split}
\end{equation}
This implies that we are in $E_{k+1}^{c} \cup D_{k+1}^{c}$, a contradiction.

Thus, on the event $E_{k+1} \cap D_{k+1}$, either some $D_{k}(m)^{c}$ with $m \in M_{k}$ occurs, or there are seven elements $m_{i}=(x_{i},s_{i})\in M_{k}$, $1 \leq i \leq 7$, with $s_{i} \neq s_{j}$, if $i \neq j$, such that $E_{k}(m_{i})$ occurs. 

Assume we are in the last case described above. We will use a union bound over all choices of $m_{i} \in M_{k}$. Fix one such choice and observe that $L_{k} \leq s_{i+2}-s_{i} \leq L_{k+1}$, for $1 \leq i \leq 5$.

We now apply Theorem~\ref{teo:vertical_decoupling} considering the event $E_{k}(m_{1})$ and the intersection $\cap_{3 \leq i \leq 7}E_{k}(m_{i})$, as in Figure~\ref{fig:renormalisation}: we can use boxes of side length $5\ell_{k+1} L_{k+1}$. Set $\epsilon=\frac{1}{3}(\rho_{k}-\rho_{k+1})=\rho_{k+1}\frac{L_{k}^{-\sfrac{1}{16}}}{3}$ and estimate
\begin{equation}
\begin{split}
\PP_{\rho_{k+1}}\left[\bigcap_{i=1}^{7} E_{k}(m_{i})\right] & \leq \PP_{\rho_{k+1}+\epsilon}\left[E_{k}(m_{1})\right]\PP_{\rho_{k+1}+\epsilon}\left[\bigcap_{i=3}^{7} E_{k}(m_{i})\right]+\useconstant{c:vertical_decoupling} L_{k+1}^{3}e^{-\useconstant{c:vertical_decoupling}^{-1}\rho_{\infty}^{2}L_{k}^{\sfrac{1}{8}}}\\
& \leq \PP_{\rho_{k}}\left[E_{k}(m_{1})\right]\PP_{\rho_{k+1}+\epsilon}\left[\bigcap_{i=3}^{7} E_{k}(m_{i})\right]+\useconstant{c:vertical_decoupling} L_{k+1}^{3}e^{-\useconstant{c:vertical_decoupling}^{-1}\rho_{\infty}^{2}L_{k}^{\sfrac{1}{8}}}.
\end{split}
\end{equation}

We apply Theorem~\ref{teo:vertical_decoupling} two more times: in the first use, we consider the events $E_{k}(m_{3})$ and $\bigcap_{i=5}^{7} E_{k}(m_{i})$. The last time uses the events $E_{k}(m_{5})$ and $E_{k}(m_{7})$. These computations yield the bound
\begin{equation}
\PP_{\rho_{k+1}}\left[\bigcap_{i=1}^{7} E_{k}(m_{i})\right] \leq \PP_{\rho_{k}}[E_{k}]^{4} +3\useconstant{c:vertical_decoupling} L_{k+1}^{3}e^{-\useconstant{c:vertical_decoupling}^{-1}\rho_{\infty}^{2}L_{k}^{\sfrac{1}{8}}}.
\end{equation}

\begin{figure}\label{fig:renormalisation}
\begin{center}
\begin{tikzpicture}

% Box $B_{k+1}$
\draw (-4,0) rectangle (4,3);

% Boxes $B_{k}$
\draw (-1,0) rectangle (1,0.3);
\draw (-0.5,0.3) rectangle (1.5,0.6);
\draw (-0.5,0.9) rectangle (1.5,1.2);
\draw (0,1.2) rectangle (2,1.5);
\draw (0,2.1) rectangle (2,2.4);
\draw (1,2.4) rectangle (3,2.7);
\draw (1.5,2.7) rectangle (3.5,3);

% Path
\draw[thick] (0,0)--(0.5,0.3)--(0.9, 0.6)--(0.5,0.9)--(0.9,1.2)--(1.3,1.5)--(1,2.1)--(2,2.4)--(2.5,2.7)--(3.3,3);

% Supports
\draw[ultra thick](-1,0) rectangle (1,0.3);]
\draw[ultra thick](-0.5,0.9) rectangle (3.5,3);

\end{tikzpicture}
\caption{The boxes in the cascading event and the supports of the functions in the first application of Theorem~\ref{teo:vertical_decoupling}.}
\end{center}
\end{figure}
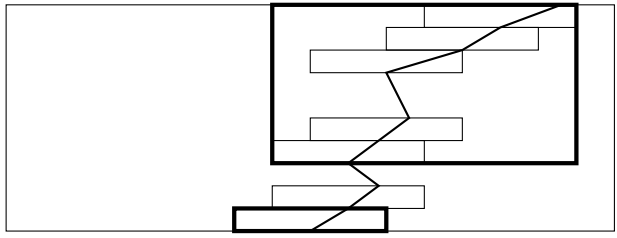

By changing constants, it is easy to conclude that
\begin{equation}
\begin{split}
p_{k+1} & \leq \PP_{\rho_{k+1}}[E_{k+1} \cap D_{k+1}]+\PP_{\rho_{k+1}}[D_{k+1}^{c}] \\
&\leq |M_{k}|^{7}(\PP_{\rho_{n}}[E_{k}]^{4} +3\useconstant{c:vertical_decoupling} L_{k+1}^{3}e^{-\useconstant{c:vertical_decoupling}^{-1}\rho_{\infty}^{2}L_{k}^{\sfrac{1}{8}}}) \\
& \qquad +|M_{k}|\PP_{\rho_{k+1}}[D_{k}^{c}]+\PP_{\rho_{k+1}}[D_{k+1}^{c}] \\
& \leq \useconstant{c:renorm}L_{k+1}^{28}[p_{k}^{4}+e^{-\useconstant{c:renorm}^{-1}\rho_{\infty}^{2}L_{k}^{\sfrac{1}{8}}}],
\end{split}
\end{equation}
and the statement follows.
\end{proof}

\par Now we prove a recursive estimate on $p_{k}$.
\begin{lemma}\label{lemma:renorm_2}
There exists $k_{1} \geq k_{0}$ such that, for $k \geq k_{1}$ and any choice of $v_{0}$, if
\begin{equation}\label{eq:recursive_relation}
p_{k} \leq e^{-\log^{\sfrac{5}{4}}L_{k}},
\end{equation}
then
\begin{equation}
p_{k+1} \leq e^{-\log^{\sfrac{5}{4}}L_{k+1}}.
\end{equation}
\end{lemma}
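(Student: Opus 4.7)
} The plan is to plug the inductive hypothesis into the recursive estimate from Lemma~\ref{lemma:renorm_1} and check that both of the resulting terms are dominated by $\tfrac12 e^{-\log^{5/4} L_{k+1}}$ for $k$ large. Using $L_{k+1} = L_k^3$, so that $\log L_{k+1} = 3 \log L_k$ and hence $\log^{5/4} L_{k+1} = 3^{5/4}\log^{5/4} L_k$, the crucial numerical fact is that $3^{5/4} < 4$. This slack between the power $4$ coming from the cascading event and the exponent $5/4$ in the desired bound is what makes the induction close.

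From Lemma~\ref{lemma:renorm_1} and the inductive hypothesis \eqref{eq:recursive_relation}, I would write
\begin{equation*}
p_{k+1} \;\leq\; \useconstant{c:renorm} L_{k+1}^{28}\, p_k^4 \;+\; \useconstant{c:renorm} L_{k+1}^{28}\, e^{-\useconstant{c:renorm}^{-1}\rho_\infty^2 L_k^{1/8}}
\;\leq\; \useconstant{c:renorm} L_k^{84}\, e^{-4\log^{5/4} L_k} \;+\; \useconstant{c:renorm} L_k^{84}\, e^{-\useconstant{c:renorm}^{-1}\rho_\infty^2 L_k^{1/8}},
\end{equation*}
and then handle the two summands separately. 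For the first, I would factor $e^{-3^{5/4}\log^{5/4} L_k}$ out and observe that the remaining factor is bounded by
$\useconstant{c:renorm}\exp\bigl(84 \log L_k - (4 - 3^{5/4}) \log^{5/4} L_k\bigr)$,
which tends to $0$ as $k \to \infty$ because $\log^{5/4} L_k \gg \log L_k$; so for $k$ beyond some threshold this factor is $\leq \tfrac12$. For the second summand, $L_k^{1/8}$ grows polynomially in $L_k$ while $\log^{5/4} L_k$ grows only poly-logarithmically, so $\useconstant{c:renorm}^{-1}\rho_\infty^2 L_k^{1/8}$ eventually dominates both $84 \log L_k$ and $3^{5/4}\log^{5/4} L_k$ (by a lot), giving a bound $\leq \tfrac12 e^{-3^{5/4}\log^{5/4} L_k}$ for all sufficiently large $k$.

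Adding the two bounds gives $p_{k+1} \leq e^{-3^{5/4}\log^{5/4} L_k} = e^{-\log^{5/4} L_{k+1}}$, as desired. Choosing $k_1 \geq k_0$ large enough so that both asymptotic inequalities above hold (and the threshold $k_0$ from Lemma~\ref{lemma:renorm_1} is met) completes the argument. There is no serious obstacle here; the only thing to verify carefully is the numerical inequality $3^{5/4} < 4$ and that the induction threshold $k_1$ can be chosen uniformly in $v_0$ (which it can, since the estimates in Lemma~\ref{lemma:renorm_1} do not depend on $v_0$ except through $\rho_\infty$, which is fixed).
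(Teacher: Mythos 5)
Your proposal is correct and follows essentially the same route as the paper: plug the inductive hypothesis into Lemma~\ref{lemma:renorm_1}, use $\log^{\sfrac{5}{4}} L_{k+1} = 3^{\sfrac{5}{4}} \log^{\sfrac{5}{4}} L_k$ and the slack $3^{\sfrac{5}{4}} < 4$ to absorb the polynomial factor $L_{k+1}^{28}$, and note the second term is negligible because $L_k^{\sfrac{1}{8}}$ dominates $\log^{\sfrac{5}{4}} L_k$. (One small cleanup: $\rho_\infty$ depends on $\rho_0$, not on $v_0$, so the uniformity in $v_0$ is even more immediate than your phrasing suggests.)
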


\begin{proof}
Observe that $3^{\sfrac{5}{4}} \leq 4$. Assume that~\eqref{eq:recursive_relation} holds for some $k \geq k_{0}$. Recall that $L_{k+1}=L_{k}^{3}$ and use Lemma~\ref{lemma:renorm_1} to conclude that
\begin{equation}
\begin{split}
e^{\log^{\sfrac{5}{4}}L_{k+1}}p_{k+1} & \leq \useconstant{c:renorm}L_{k+1}^{28}[p_{k}^{4}+e^{-\useconstant{c:renorm}^{-1}\rho_{\infty}^{2}L_{k}^{\sfrac{1}{8}}}]e^{\log^{\sfrac{5}{4}}L_{k+1}} \\
& \leq \useconstant{c:renorm}L_{k+1}^{28}[e^{-4\log^{\sfrac{5}{4}}L_{k}}+e^{-\useconstant{c:renorm}^{-1}\rho_{\infty}^{2}L_{k}^{\sfrac{1}{8}}}]e^{3^{\sfrac{5}{4}}\log^{\sfrac{5}{4}}L_{k}}\\
& \leq \useconstant{c:renorm}L_{k+1}^{28}[e^{(-4+3^{\sfrac{5}{4}})\log^{\sfrac{5}{4}}L_{k}}+e^{-\useconstant{c:renorm}^{-1}\rho_{\infty}^{2}L_{k}^{\sfrac{1}{8}}+3^{\sfrac{5}{4}}\log^{\sfrac{5}{4}}L_{k}}].
\end{split}
\end{equation}

Now simply choose $k_{1} \geq k_{0}$ such that, if $k \geq k_{1}$, then
\begin{equation*}
\useconstant{c:renorm}L_{k+1}^{28}[e^{(-4+3^{\sfrac{5}{4}})\log^{\sfrac{5}{4}}L_{k}}+e^{-\useconstant{c:renorm}^{-1}\rho_{\infty}^{2}L_{k}^{\sfrac{1}{8}}+3^{\sfrac{5}{4}}\log^{\sfrac{5}{4}}L_{k}}]<1.
\end{equation*}
This concludes the proof.
\end{proof}

\par The last step is to verify that, if $v_{0}$ is chosen large enough,~\eqref{eq:recursive_relation} holds for some $k \geq k_{1}$.
\begin{lemma}\label{lemma:renorm_3}
There exist $v_{0}$ and $k_{2} \geq k_{1}$ such that $p_{k_{2}} \leq e^{-\log^{\sfrac{5}{4}}L_{k_{2}}}$.
\end{lemma}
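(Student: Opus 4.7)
The plan is a direct estimate of $p_{k_2}$, making $v_0$ large. Decompose $E_{k_2} \subset F_1 \cup F_2$ with
\[
F_1=\{\sup_{s\leq L_{k_2}} r_s \geq \ell_{k_2}L_{k_2}^{2}\}, \qquad F_2=\{r_{L_{k_2}} \geq v_{k_2}L_{k_2}\},
\]
corresponding respectively to the front exiting through the right vertical side of $B_{k_2}$ and to the front lying above the line $x=v_{k_2}t$ at the terminal time. By Lemma~\ref{lemma:infection_1}, once $k_2$ is large enough that $\ell_{k_2}>\useconstant{c:bound_A}$, one has $\PP_{\rho_{k_2}}[F_1] \leq \useconstant{c:quadratic_displacement} e^{-\useconstant{c:quadratic_displacement}^{-1}L_{k_2}}$, which decays far faster than $e^{-\log^{\sfrac{5}{4}}L_{k_2}}$. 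All the work thus lies in bounding $\PP_{\rho_{k_2}}[F_2]$.

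For this I would establish a linear speed estimate: there is an absolute constant $v^{\star}=v^{\star}(\rho_{\infty},\Gamma_{+})$ and, for each $v>v^{\star}$, a constant $c(v)>0$ with $c(v)\to\infty$ as $v\to\infty$, such that $\PP_{\rho_{k_2}}[r_{L_{k_2}} \geq v L_{k_2}] \leq C\,e^{-c(v)L_{k_2}}$. The route is a chain-of-infection argument. Any infected particle at position $r_t$ at time $t$ traces backwards to an originally-infected particle (with initial position $\leq 0$) through a sequence $p_1\to p_2 \to \cdots \to p_K$ of pairwise site-sharings at times $0=\tau_0\leq \tau_1 \leq \cdots \leq \tau_K = t$. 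Telescoping along the chain yields $r_t \leq \sum_{i=1}^{K} d_i^{+}$, where $d_i^{+}$ is the rightward displacement of $p_i$ on $[\tau_{i-1},\tau_i]$. Each $|d_i^{+}|$ is majorized by the number of graphical-construction clock rings acting on $p_i$ during its subinterval; since these come from independent Poisson processes attached to distinct sites in Subsection~\ref{subsec:graphical}, the sum $\sum_i d_i^{+}$ is stochastically dominated by $\poisson(\Gamma_{+} L_{k_2})$ on every fixed chain, and a Cram\'er bound supplies the exponential estimate.

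To pass from the per-chain bound to one for $r_{L_{k_2}}$, I would restrict to the high-probability event that the front remains in $[-\useconstant{c:bound_A} L_{k_2}^{2},\useconstant{c:bound_A} L_{k_2}^{2}]$ on $[0,L_{k_2}]$ (by Lemma~\ref{lemma:infection_1}) and that all piles in this window stay polynomially bounded (by Lemma~\ref{lemma:many_particles}). Under these controls only a sub-exponential number of admissible chains must be unioned over, and the combinatorial factor is absorbed by $e^{-c(v)L_{k_2}}$ once $v$ exceeds $v^{\star}$. Choosing $v_0>v^{\star}$ and then $k_2\geq k_1$ large enough that $c(v_0)L_{k_2}\geq \log^{\sfrac{5}{4}}L_{k_2}+\log 2$ gives $\PP_{\rho_{k_2}}[F_2] \leq \tfrac{1}{2}e^{-\log^{\sfrac{5}{4}}L_{k_2}}$, concluding the proof.

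The main obstacle is the linear speed bound itself. The telescoping identity along an infection chain is transparent, but quantitatively combining the independence of the clocks in the graphical construction with a tight enumeration of admissible chains is delicate; the requirement that $v_0$ be an absolute constant independent of the scale $k_2$ is what makes this hard, since a naive union bound would force $v_0$ to grow with $L_{k_2}$. An alternative and perhaps more robust route is to adapt a large-deviation estimate for the current of the zero range process, in the spirit of Kesten--Sidoravicius~\cite{ks}, which would bypass the combinatorial count altogether.
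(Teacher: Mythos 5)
Your decomposition $E_{k_2} \subset F_{1} \cup F_{2}$ is correct, and the bound $\PP_{\rho_{k_2}}[F_{1}]\le \useconstant{c:quadratic_displacement}e^{-\useconstant{c:quadratic_displacement}^{-1}L_{k_2}}$ via Lemma~\ref{lemma:infection_1} is sound. But the rest of the proposal rests on a misreading: you impose the extra requirement that $v_{0}$ be an absolute constant, independent of the scale $k_{2}$, whereas the statement only asserts the existence of a pair $(v_{0},k_{2})$, and Lemmas~\ref{lemma:renorm_1} and~\ref{lemma:renorm_2} are already stated for \emph{any} choice of $v_{0}$. So $v_{0}$ is free to depend on $k_{2}$. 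Exploiting this collapses the whole difficulty: choose $v_{0}^{k}$ so that $v_{k}^{k}=\ell_{k}L_{k}$. Then $v_{k}L_{k}=\ell_{k}L_{k}^{2}$ equals the half-width of $B_{k}$, the set $R_{k}$ degenerates to (essentially) the right vertical side of the box, and your $F_{2}$ becomes a subset of $F_{1}$. Hence $E_{k}\subset\{\sup_{s\le L_{k}}r_{s}-r_{0}\ge \useconstant{c:bound_A}L_{k}^{2}\}$ once $\ell_{k}>\useconstant{c:bound_A}$, and Lemma~\ref{lemma:infection_1} gives $p_{k}(v_{0}^{k})\le \useconstant{c:quadratic_displacement}e^{-\useconstant{c:quadratic_displacement}^{-1}L_{k}}$, far below $e^{-\log^{\sfrac{5}{4}}L_{k}}$ for $k$ large. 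Fixing one such $k_{2}\ge k_{1}$ then fixes $v_{0}:=v_{0}^{k_{2}}$ as a finite constant, and $v_{\infty}=v_{0}+\sum_{j\ge 0}(j+1)^{-2}$ remains finite, so nothing downstream is affected.

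Consequently the ``linear speed estimate'' $\PP_{\rho}[r_{L_{k}}\ge vL_{k}]\le Ce^{-c(v)L_{k}}$ with $c(v)\to\infty$ uniformly over scales, which you flag as the main obstacle, is never needed here. It is also genuinely stronger than Lemma~\ref{lemma:renorm_3}: it would essentially reprove Theorem~\ref{teo:finite_velocity} with an exponential rate, bypassing the renormalisation altogether. As written, your chain-of-infection sketch does not supply it---you yourself note that controlling the union over admissible chains is the hard part---so the proposal as submitted has a real gap. But the gap is created by the unneeded constraint on $v_{0}$; once that is dropped, the lemma follows in a few lines from the ingredients you already have.
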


\begin{proof}
Our strategy is to choose one value of $v_{0}^{k}$ for each $k$ at first. We then fix $k_{2}$ large enough and choose the corresponding $v_{0}^{k}$.

For $k \geq k_{1}$, set $v_{0}^{k}$ in such a way that ${v}_{k}^{k}=\ell_{k} L_{k}$, and observe that for this velocity,
\begin{equation}
E_{k} \subset \left\{\sup_{s \leq L_{k}}\{r_{s}\}-r_{0} \geq \useconstant{c:bound_A} L_{k}^{2}\right\}.
\end{equation}

Now, Lemma~\ref{lemma:infection_1} implies that
\begin{equation}
p_{k}(v_{0}^{k}) \leq \PP_{\rho_{k}}\left[\sup_{s \leq L_{k}}\{r_{s}\}-r_{0} \geq \useconstant{c:bound_A} L_{k}^{2}\right] \leq \useconstant{c:quadratic_displacement}e^{-\useconstant{c:quadratic_displacement}^{-1}L_{k}}.
\end{equation}

Increasing the value of $k$ if necessary gives the desired bound. Once we have a value for $k$, we can choose the corresponding value for $v_{0}^{k}$.
\end{proof}

\subsection{Proof of Theorem~\ref{teo:finite_velocity}}\label{subsec:finite_vel}
~
\par In this section, we conclude the proof of Theorem~\ref{teo:finite_velocity}. We use the multiscale renormalisation scheme developed in the last subsection to prove that~$r_{t}$ has finite velocity.

\par Define the space-time half-plane
\begin{equation}\label{eq:half_space}
\mathscr{H}_{v,L}=\{(x,t) \in \ZZ \times \RR_{+}: x \geq tv+L\}.
\end{equation}
We will prove that the probability that $r_{t} \in \mathscr{H}_{v,L}$, for some $t \geq 0$, decays fast with $L$ when $v$ is large enough. We already have information about $r_{t}$ for the times $L_{k}$. All that is necessary now is to interpolate between these times.

\par Fix $v_{0}$ and $k_{2}$ as in Lemma~\ref{lemma:renorm_3} and define
\begin{equation}\label{eq:terminal_velocity}
\bar{v}=v_{\infty}=\lim_{k \to \infty} v_{k}.
\end{equation}
Define the events $\bar{E}_{k}(m)$ as in~\eqref{eq:renorm_event} but with $v_{k}$ replaced by $\bar{v}$. Observe that we have
\begin{equation}\label{eq:renorm_bound}
\PP_{\rho_{\infty}}[\bar{E}_{k}(m)] \leq \PP_{\rho_{k}}[E_{k}(m)] \leq e^{-\log^{\sfrac{5}{4}}L_{k}}, \qquad \text{for all } k \geq k_{3}.
\end{equation}

\begin{proof}[Proof of Theorem~\ref{teo:finite_velocity}]
Notice that, if
\begin{equation}\label{eq:conditioning_step}
\PP_{\rho_{\infty}}\left[\begin{array}{cl}
r_{t} \in \mathscr{H}_{\bar{v},L},
\text{ for some } t \geq 0, \\ \text{and } r_{0}=0
\end{array}\right] \leq \useconstant{c:finite_speed}e^{-\useconstant{c:finite_speed}^{-1}\log^{\sfrac{5}{4}}L},
\end{equation}
then
\begin{equation}
\begin{split}
\PP_{\rho_{\infty}}\left[\begin{array}{cl}
r_{t} \in \mathscr{H}_{\bar{v},L}, \\
\text{for some } t \geq 0
\end{array}\right] & \leq \sum_{y=0}^{\infty}\PP_{\rho_{\infty}}\left[\begin{array}{cl}
r_{t} \in \mathscr{H}_{\bar{v},L},
\text{ for some } t \geq 0, \\ \text{and } r_{0}=-y
\end{array}\right] \\
& \leq \sum_{y=0}^{\infty}\useconstant{c:finite_speed}e^{-\useconstant{c:finite_speed}^{-1}\log^{\sfrac{5}{4}}(L+y)} \leq \useconstant{c:finite_speed}e^{-\useconstant{c:finite_speed}^{-1}\log^{\sfrac{5}{4}}L}.
\end{split}
\end{equation}
Hence, we may condition on the event $\{r_{0}=0\}$.

By changing constants, we may assume that $L \geq L_{k_{2}}$. Choose $\tilde{k} \geq k_{2}$ such that
\begin{equation}
L_{\tilde{k}}\leq L < L_{\tilde{k}+1}.
\end{equation}

For $m =(x,s) \in \ZZ \times L_{k}\NN_{0}$, we define the event where $r(m)$ does not travel very far in time $L_{k}$, more precisely,
\begin{equation}
H_{k}(m)=\left\{\begin{array}{cl}
\sup_{0 \leq t \leq L_{k}}r_{t}(m)-x \leq (\bar{v}+1)L_{k} \\
\text{and} \\
x-\inf_{0 \leq t \leq L_{k}}r_{t}(m) \leq 2(\Gamma_{+}+1)L_{k}
\end{array}\right\},
\end{equation}
and observe that Lemmas~\ref{lemma:infection_2} and~\ref{lemma:infection_3} imply, by possibly changing the value of the constant,
\begin{equation}\label{eq:probability_E}
\PP_{\rho_{\infty}}[\bar{E}_{k}(m)^{c} \cap H_{k}(m)^{c}] \leq \useconstant{c:large_backtrack}e^{-\useconstant{c:large_backtrack}^{-1}L_{k}}.
\end{equation}

We will define an event where $r_{t}$ is well-behaved. Recall~\eqref{eq:M_k} and consider
\begin{equation}
\tilde{B}_{\tilde{k}}=\bigcap_{k \geq \tilde{k}}\bigcap_{m \in M_{k}} \bar{E}_{k}(m)^{c} \cap H_{k}(m).
\end{equation}
In the event above, we have bounds for $r_{t}$ at the times $L_{k}$ and we also know that the front does not travel far away during the time intervals of length $L_{k}$.

%%%%%
\newconstant{c:bad_event_bound}
%%%%%

Observe that Equations~\eqref{eq:renorm_bound} and~\eqref{eq:probability_E} imply that
\begin{equation}\label{eq:prob_tilde_B_complement}
\begin{split}
\PP_{\rho_{\infty}}[\tilde{B}_{\tilde{k}}^{c}] & \leq \sum_{k \geq \tilde{k}}\sum_{m \in M_{k}}\PP_{\rho_{\infty}}[\bar{E}_{k}(m)]+\PP_{\rho_{\infty}}[\bar{E}_{k}(m)^{c} \cap H_{k}(m)^{c}]\\
& \leq \useconstant{c:bad_event_bound}\sum_{k \geq \tilde{k}}L_{k}^{12}e^{-\log^{\sfrac{5}{4}}L_{k}} \leq \useconstant{c:bad_event_bound}L_{\tilde{k}}^{13}e^{-\log^{\sfrac{5}{4}}L_{\tilde{k}}} \\
& \leq \useconstant{c:bad_event_bound}L^{13}e^{-\useconstant{c:bad_event_bound}^{-1}\log^{\sfrac{5}{4}}L},
\end{split}
\end{equation}
where the tail bound in the second line above is proved in an analogous way as Lemma D.1 of~\cite{rwrw}.

We now study the event $\tilde{B}_{\tilde{k}}$. Consider
\begin{equation}\label{eq:set_times}
J_{\tilde{k}}=\bigcup_{k \geq \tilde{k}}\bigcup_{\ell=0}^{\sfrac{L_{k+1}}{L_{k}}}\{\ell L_{k}\}.
\end{equation}

We claim that, on $\tilde{B}_{\tilde{k}}\cap \{r_{0}=0\}$,
\begin{equation}\label{eq:bounded_velocity}
r_{t} \leq \bar{v}t, \qquad \text{for all } t \in J_{\tilde{k}}.
\end{equation}
To see why this is true, fix $k \geq \tilde{k}$ and use induction on $\ell$. The claim is clearly true for $\ell=0$. Suppose it is true for some $\ell <\sfrac{L_{k+1}}{L_{k}}$. Observe that, since we are in $H_{k}(m)$, for $m \in M_{k}$, $(r_{\ell L_{k}}, \ell L_{k})$ belongs to $B_{k+1}$. Using that $E_{k}(r_{\ell L_{k}})^{c}$ holds, we have
\begin{equation}
\begin{split}
r_{(\ell+1)L_{k}}& = \left(r_{L_{k}}(r_{\ell L_{k}},\ell L_{k})-r_{\ell L_{k}}\right)+r_{\ell L_{k}} \\
& \leq \bar{v}L_{k}+\bar{v}\ell L_{k} = \bar{v}(\ell+1)L_{k}.
\end{split}
\end{equation}

It remains to interpolate the relation in~\eqref{eq:bounded_velocity} for positive values of $t$. Consider initially $t \geq L$. Let $\kappa$ be the smallest $k \geq \tilde{k}$ such that
\begin{equation}
\ell L_{\kappa} \leq t < (\ell+1)L_{\kappa}, \qquad \text{for some } \ell < \frac{L_{\kappa+1}}{L_{\kappa}}.
\end{equation}
Let $\bar{\ell}$ denote the unique value of $\ell$ and observe that $\bar{\ell} \geq 1$.

We compute
\begin{equation}
\begin{split}
r_{t}& = \left(r_{t-\bar{\ell}L_{\kappa}}(r_{\bar{\ell}L_{\kappa}},\bar{\ell}L_{\kappa})-r_{\bar{\ell}L_{\kappa}}\right)+r_{\bar{\ell}L_{\kappa}} \\
& \leq (\bar{v}+1)L_{\kappa}+\bar{v}\bar{\ell}L_{\kappa} \leq (2\bar{v}+1)t.
\end{split}
\end{equation}

We now consider $t \leq L$. Observe that, on $\tilde{B}_{\tilde{k}} \cap \{r_{0}=0\}$, we have $r_{L} \leq (2\bar{v}+1)L$. Now, Lemma~\ref{lemma:infection_2} implies
\begin{equation}
\begin{split}
\PP_{\rho_{\infty}} & \left[\sup_{s \leq L}r_{s} \geq 2(\bar{v}+\Gamma_{+}+1)L, \tilde{B}_{\tilde{k}}\cap \{r_{0}=0\}\right]\\
& \qquad \qquad \qquad \leq \PP_{\rho_{\infty}}\left[r_{L}-\sup_{s \leq L}r_{s} \geq (2\Gamma_{+}+1)L\right] \leq \useconstant{c:large_backtrack}e^{-\useconstant{c:large_backtrack}L}.
\end{split}
\end{equation}

Combining the last expression above with~\eqref{eq:prob_tilde_B_complement}, we obtain
\begin{equation}
\PP_{\rho_{\infty}}\left[\begin{array}{cl}
r_{t} \in \mathscr{H}_{2\bar{v}+1,2(\bar{v}+\Gamma_{+}+1)L}, \\
\text{for some } t \geq 0, \text{ and } r_{0}=0
\end{array}\right] \leq \useconstant{c:positive_speed}e^{-\useconstant{c:positive_speed}^{-1}\log^{\sfrac{5}{4}}L}.
\end{equation}

By changing constants, the proof is complete.
\end{proof}

\section{Positive velocity}\label{sec:positive_vel}

\par This section contains the proof of Theorem~\ref{teo:positive_velocity}. This proof is also based on multiscale renormalisation. One may try to consider a similar event as the one in the renormalisation used in the proof of Theorem~\ref{teo:finite_velocity}, and prove that the event where the front of the wave does not travel to the right has small positive probability. However, when proving an analogous of Lemma~\ref{lemma:renorm_3}, it is necessary to understand more refined properties of the process in order to prove that, at a fixed large time, the wave front indeed travels to the right with large probability. We choose to consider a slightly different approach, proving that, in a positive proportion of time, the front of the infection wave has more than one particle, producing a drift to the right. A similar approach was also used by~\cite{bht} and~\cite{ks}.

\subsection{Simultaneous decoupling}
~
\par For the proof of positive velocity, it is not possible to apply the decoupling stated in Therorem~\ref{teo:vertical_decoupling} for the class of events we consider in the renormalisation. In this subsection we provide a stronger version of the decoupling.

\par For $\rho < \rho'$, we construct the measure $\PP_{\rho, \rho'}$ in the following way. Begin with two initial configurations that satisfy $\eta_{0}(x)\leq \eta'_{0}(x)$ (this can be done using the usual monotone coupling) and use one copy of the graphical construction presented in Subsection~\ref{subsec:graphical} to evolve both processes at the same time.  Whenever a particle jumps, it goes on top of its respective pile and particles of $\eta$ are also seen as particles of the process $\eta'$.

\par The probability measure $\PP_{\rho, \rho'}$ provides the construction of two zero range processes, $\eta$ and $\eta'$, with respective densities $\rho$ and $\rho'$ and that satisfy $\eta_{t}(x) \leq \eta'_{t}(x)$, for all $(x,t) \in \ZZ \times \RR_{+}$.

%%%%%
\newconstant{c:simultaneous_decoupling}
\newbigconstant{C:simultaneous_decoupling}

\par We prove a decoupling for the collection of measures $\PP_{\rho, \rho'}$.
\begin{prop}\label{prop:simultaneous_decoupling}
Fix $0< \rho_{-} < \rho_{+}$. There exist positive constants $\useconstant{c:simultaneous_decoupling}=\useconstant{c:simultaneous_decoupling}(\rho_{-}, \rho_{+})$ and $\usebigconstant{C:simultaneous_decoupling}=\usebigconstant{C:simultaneous_decoupling}(\rho_{-}, \rho_{+})$ such that, for any two boxes $B_{1}$ and $B_{2}$ with side-length $s>0$ that satisfy
\begin{equation}
\dist_{V}=\dist_{V}(B_{1}, B_{2}) \geq \usebigconstant{C:simultaneous_decoupling},
\end{equation}
and any two functions $f_{1}(\eta, \eta')$ and $f_{2}(\eta, \eta')$ satisfying
\begin{enumerate}
\item $f_{i}$ is supported in $B_{i}$;
\item $0 \leq f_{i}(\eta, \eta') \leq 1$ almost surely;
\item $f_{i}$ is non-increasing in $\eta$ and non-decreasing in $\eta'$;
\end{enumerate}
we have the following. For any $\rho \leq \rho'$ and $\epsilon \in (0,1]$ such that $\rho_{-} \leq \rho-\epsilon \leq \rho' \leq \rho_{+}$,
\begin{equation}\label{eq:decoupling_estimate_2}
\EE_{\rho, \rho'}[f_{1}f_{2}] \leq \EE_{\rho-\epsilon, \rho'+\epsilon}[f_{1}]\EE_{\rho-\epsilon, \rho'+\epsilon}[f_{2}]+\useconstant{c:simultaneous_decoupling} \dist_{V}(\dist_{V}+s+1)e^{-\useconstant{c:simultaneous_decoupling}^{-1}\epsilon^{2}\dist_{V}^{\sfrac{1}{4}}}.
\end{equation}
\end{prop}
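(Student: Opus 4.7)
The plan is to mimic the proof of Theorem~\ref{teo:vertical_decoupling}, with Proposition~\ref{prop:coupling} replaced by a simultaneous four-process analogue. Assume, as in the proof of Theorem~\ref{teo:vertical_decoupling}, that $B_1\subset\ZZ\times[-s,0]$ and $B_2\subset\ZZ\times[\dist_V,\dist_V+s]$, and let $I$ be a horizontal enlargement of the projection of $B_2$ onto $\ZZ$ that controls particles entering $B_2$ from outside during $[\dist_V,\dist_V+s]$. The main intermediate step will be a coupling of $(\eta_s,\eta'_s)_{s\in\RR}\sim\PP_{\rho,\rho'}$ with $(\tilde\eta_s,\tilde\eta'_s)_{s\geq 0}\sim\PP_{\rho-\epsilon,\rho'+\epsilon}$ such that $(\tilde\eta,\tilde\eta')_{s\geq 0}$ is independent of $(\eta,\eta')_{s\leq 0}$ and
\[
\PP\bigl[\exists\,x\in I:\ \tilde\eta_{\dist_V}(x)>\eta_{\dist_V}(x)\ \text{or}\ \eta'_{\dist_V}(x)>\tilde\eta'_{\dist_V}(x)\bigr]\leq \useconstant{c:simultaneous_decoupling}\dist_V(\dist_V+s+1)e^{-\useconstant{c:simultaneous_decoupling}^{-1}\epsilon^{2}\dist_V^{\sfrac{1}{4}}}.
\]

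To build this coupling I would sample, independently of $(\eta_s,\eta'_s)_{s\leq 0}$, a monotonically coupled initial pair $(\tilde\eta_0,\tilde\eta'_0)$ with marginals $\mu_{\rho-\epsilon}$ and $\mu_{\rho'+\epsilon}$. Partitioning an enlargement $H\supset I$ into intervals $I_j$ of length $L=\lfloor\dist_V^{\sfrac{1}{4}}\rfloor$, two applications of Proposition~\ref{prop:concentration} imply that outside a set of probability $O(|H|L^{-1}e^{-c\epsilon^{2}L})$ every $I_j$ contains at least as many $\eta$-particles as $\tilde\eta$-particles and at least as many $\tilde\eta'$-particles as $\eta'$-particles. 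This permits injective, site-preserving matchings $\tilde\eta\hookrightarrow\eta$ and $\eta'\hookrightarrow\tilde\eta'$ inside each $I_j$, exactly as in Figure~\ref{fig:pairing}. Two independent graphical constructions $\mathscr{P}_1,\mathscr{P}_2$ then drive all four processes: $(\eta,\eta')$ always use $\mathscr{P}_1$, matched particles of $\tilde\eta$ and $\tilde\eta'$ also use $\mathscr{P}_1$, unmatched ones use $\mathscr{P}_2$, with the same height-update and matching-refresh rules at times $t_k=k\dist_V^{\sfrac{3}{4}}$ used in Subsection~\ref{subsec:coupling}. Together with the heat-kernel estimate of~\eqref{eq:bound_without_pair} and the a-priori particle-count bound~\eqref{eq:I_full} applied in each interval of $H$, this yields the probability estimate displayed above.

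Given the coupling, the remaining argument is the analogue of~\eqref{eq:V_decoupling_bound_1}--\eqref{eq:V_decoupling_bound_3}: letting $F$ be the event that the coupling fails in $I$ at time $\dist_V$ and $E$ the event that a particle of $\eta$ or $\tilde\eta'$ located outside $I$ at time $\dist_V$ enters $B_2$ before time $\dist_V+s$ (both controlled as in that proof), on $E^c\cap F^c$ the monotonicity hypotheses on $f_2$ give $f_2(\eta,\eta')\leq f_2(\tilde\eta,\tilde\eta')$; independence of $(\tilde\eta,\tilde\eta')_{s\geq 0}$ from $(\eta,\eta')_{s\leq 0}$ then factorises $\EE[f_1(\eta,\eta')f_2(\tilde\eta,\tilde\eta')]=\EE_{\rho,\rho'}[f_1]\EE_{\rho-\epsilon,\rho'+\epsilon}[f_2]$, and the standard monotone coupling of $\PP_{\rho,\rho'}$ with $\PP_{\rho-\epsilon,\rho'+\epsilon}$ combined with the monotonicity hypothesis on $f_1$ provides $\EE_{\rho,\rho'}[f_1]\leq \EE_{\rho-\epsilon,\rho'+\epsilon}[f_1]$. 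The delicate point I expect is verifying that the four-process evolution described above still produces each coordinate as a bona-fide zero range process with the correct marginal while preserving $\tilde\eta\leq\tilde\eta'$ for all $s\geq 0$: the rule of sending matched pairs to the bottom of their piles must now be applied simultaneously for both matchings, and a four-process analogue of Claim~\ref{claim:coupling} is needed to rule out that the combined re-orderings break the monotone coupling between $\tilde\eta$ and $\tilde\eta'$.
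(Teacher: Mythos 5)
Your high-level plan — mimic the proof of Theorem~\ref{teo:vertical_decoupling} using a four-process analogue of Proposition~\ref{prop:coupling}, and then argue as in \eqref{eq:V_decoupling_bound_1}--\eqref{eq:V_decoupling_bound_3} — is the same as the paper's (Proposition~\ref{prop:simultaneous_coupling}). However, your description of the coupling has the clock assignment backwards, and this breaks the independence step that makes the whole argument go through. You declare ``$(\eta,\eta')$ always use $\mathscr{P}_1$, matched particles of $\tilde\eta$ and $\tilde\eta'$ also use $\mathscr{P}_1$, unmatched ones use $\mathscr{P}_2$,'' and then invoke ``independence of $(\tilde\eta,\tilde\eta')_{s\geq 0}$ from $(\eta,\eta')_{s\leq 0}$.'' But with that assignment, the evolution of $(\tilde\eta,\tilde\eta')$ depends on which of its particles are matched, and the matching is built from $(\eta_0,\eta'_0)$ — which is exactly where the past of $(\eta,\eta')$ lives (note $B_1\subset\ZZ\times[-s,0]$, so $f_1$ is a function of $(\eta_0,\eta'_0)$). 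So $(\tilde\eta,\tilde\eta')_{s\geq 0}$ is \emph{not} independent of $(\eta,\eta')_{s\leq 0}$, and $\EE[f_1(\eta,\eta')f_2(\tilde\eta,\tilde\eta')]$ does not factorise. The roles must be reversed: the freshly sampled pair $(\tilde\eta,\tilde\eta')$ is the autonomous one (it must evolve using one fixed graphical construction, as a function of $(\tilde\eta_0,\tilde\eta'_0)$ and that construction only), and it is $(\eta,\eta')$ that catches up by switching clocks once matched — at the cost of then having to verify that $(\eta,\eta')$ with this modified positive-time dynamics is still a $\PP_{\rho,\rho'}$-pair, as in Claim~\ref{claim:coupling}. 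This is exactly what the paper does: $(\bar\eta,\bar\eta')$ evolves with $\mathscr{P}_2$ alone, while $(\eta,\eta')$ follows the matching.

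A second, more minor divergence: you run the two matchings ($\tilde\eta\hookrightarrow\eta$ and $\eta'\hookrightarrow\tilde\eta'$) simultaneously at every matching time. The paper instead splits the matching times into two halves — the first half matches only $\eta$ with $\bar\eta$, the second half matches $\eta'$ with $\bar\eta'$ while maintaining the already-achieved ordering and the constraints $\eta\leq\eta'$, $\bar\eta\leq\bar\eta'$ via height reorderings. Your simultaneous version is plausible but more delicate: an $\eta$-particle is also an $\eta'$-particle, so it may simultaneously carry a $\tilde\eta$-partner (for the first matching) and a $\tilde\eta'$-partner (for the second); once one pairing is consummated, the jump decisions become constrained and it is no longer clear the other partner can still be reached by a random walk on an independent clock. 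You flag this yourself at the end. The paper's two-phase scheme sidesteps this by only ever juggling one matching at a time, which is probably why the authors chose it.

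Note that the probability calculus and monotonicity steps you sketch (bounding $\PP[E]$ and $\PP[F]$ as in \eqref{eq:V_decoupling_bound_2}--\eqref{eq:V_decoupling_bound_3}, $f_2(\eta,\eta')\leq f_2(\tilde\eta,\tilde\eta')$ on $E^c\cap F^c$, and $\EE_{\rho,\rho'}[f_1]\leq\EE_{\rho-\epsilon,\rho'+\epsilon}[f_1]$ via the monotone coupling $\tilde\eta\leq\eta\leq\eta'\leq\tilde\eta'$) are all correct; the gap is isolated in the construction of the coupling.
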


%%%%%
\newbigconstant{c:simultaneous_coupling_2}
\newconstant{c:simultaneous_coupling}
%%%%%

\par The proof follows exactly the same steps of the proof of Theorem~\ref{teo:vertical_decoupling}. The existence of a coupling with the same characteristics of the one in Proposition~\ref{prop:coupling} is guaranteed by the next result.
\begin{prop}\label{prop:simultaneous_coupling}
Fix $0 < \rho_{-} < \rho_{+}$. There exist positive constants $\useconstant{c:simultaneous_coupling}$ and $\usebigconstant{c:simultaneous_coupling_2}$ such that, for any $t \geq \usebigconstant{c:simultaneous_coupling_2}$, interval $I \subset \ZZ$, densities $\rho \leq \rho' \in [\rho_{-},\rho_{+}]$ and $ \epsilon \in (0,1]$ such that $\rho-\epsilon \geq \rho_{-}$, there exists a coupling between two pairs of zero range processes $(\eta_{s}, \eta'_{s})_{s \geq 0}$ and $(\bar{\eta}_{s}, \bar{\eta}'_{s})_{s \geq 0}$ such that
\begin{enumerate}
\item[1.] $(\eta_{s}, \eta'_{s})_{s \geq 0}$ is distributed as $\PP_{\rho, \rho'}$ and $(\bar{\eta}_{s}, \bar{\eta}'_{s})_{s \geq 0}$ is distributed as $\PP_{\rho-\epsilon, \rho+\epsilon}$;
\item[2.] $(\bar{\eta}_{s}, \bar{\eta}'_{s})_{s \geq 0}$ is independent from $(\eta_{0}, \eta_{0}')$;
\item[3.]
\begin{equation}\label{eq:bad_simultanous_counpling}
\PP\left[
\begin{array}{cl}
\text{there exists } x \in I \text{ such that} \\ \eta_{t}(x) < \bar{\eta}_{t}(x) \text{ or } \eta_{t}'(x) > \bar{\eta}'_{t}(x)
\end{array}
\right] \leq \useconstant{c:simultaneous_coupling} t (|I|+t)e^{-\useconstant{c:simultaneous_coupling}^{-1}\epsilon^{2}t^{\sfrac{1}{4}}}.
\end{equation}
\end{enumerate}
\end{prop}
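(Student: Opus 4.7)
The plan is to run two coupled ``pairing'' dynamics in parallel, echoing Proposition~\ref{prop:coupling} but now for four processes at once: a \emph{lower} pairing aimed at producing $\bar{\eta}_t \leq \eta_t$ on $I$, and an \emph{upper} pairing aimed at producing $\eta_t' \leq \bar{\eta}_t'$ on $I$. Start by drawing $(\eta_0, \eta_0') \sim \PP_{\rho, \rho'}$ and, independently, $(\bar{\eta}_0, \bar{\eta}_0') \sim \PP_{\rho-\epsilon, \rho'+\epsilon}$. Fix an enclosing interval $H \supset I$ of size $|H| \leq |I| + O(t)$, partition it into $N$ subintervals of length $L = \lfloor t^{\sfrac{1}{4}}\rfloor$, and let $(t_k)_k$ be the matching times $kt^{\sfrac{3}{4}}$, all exactly as in the proof of Proposition~\ref{prop:coupling}.

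At each matching time $t_k$, apply Proposition~\ref{prop:concentration} separately to each pair to show that, with probability at least $1 - 4N e^{-c\epsilon^2 t^{\sfrac{1}{4}}}$, both
\[
\sigma_j(\bar{\eta}_{t_k}) \leq \sigma_j(\eta_{t_k}) \qquad \text{and} \qquad \sigma_j(\eta'_{t_k}) \leq \sigma_j(\bar{\eta}'_{t_k})
\]
hold for every subinterval $I_j$. On this event, build the two matchings site-by-site as in Figure~\ref{fig:pairing}. Use two independent copies $\mathscr{P}_1, \mathscr{P}_2$ of the graphical construction: $(\bar{\eta}, \bar{\eta}')$ evolves under $\mathscr{P}_1$ via the standard monotone coupling, $(\eta, \eta')$ evolves under $\mathscr{P}_2$ via the standard monotone coupling, and whenever a lower-paired (resp.\ upper-paired) particle meets its mate, the two merge and move together under $\mathscr{P}_1$ from that time on, with pile heights reordered as in the proof of Proposition~\ref{prop:coupling}. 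The probability that a designated paired particle fails to meet its mate before the next matching time reduces, as there, to the heat-kernel bound for a rate-$2\Gamma_{-}$ random walk failing to reach a given displacement of order $L$ in time $t^{\sfrac{3}{4}}$, giving a factor $t^{-\sfrac{1}{16}}$ per attempt and hence $\exp(-\tfrac{1}{32}t^{\sfrac{1}{4}}\log t)$ after $\lfloor t^{\sfrac{1}{4}}\rfloor$ independent attempts.

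The main obstacle will be verifying that the switching rule simultaneously preserves (i)~the four marginal laws, (ii)~the monotone couplings $\eta \leq \eta'$ and $\bar{\eta} \leq \bar{\eta}'$, and (iii)~the independence of $(\bar{\eta}, \bar{\eta}')$ from $(\eta_0, \eta_0')$. Item~(i) is handled exactly as in Claim~\ref{claim:coupling}, since switching which Poisson clocks govern a given particle and permuting the heights in a pile both preserve the zero range law. For~(ii), the natural picture is to decompose $\eta'$ into a base layer (identified with the particles of $\eta$) and a shadow layer (the extra particles), subject the base layer to the lower pairing and the shadow layer to the upper pairing, so that whenever a base particle switches clocks its twin in $\eta'$ switches at the same time, and the inequality $\eta \leq \eta'$ is never broken; an analogous decomposition handles $\bar{\eta} \leq \bar{\eta}'$. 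Item~(iii) follows because, as in Proposition~\ref{prop:coupling}, the barred pair depends only on $\mathscr{P}_1$ and its own initial data. Combining the two domination-failure estimates with the usual tail bounds from Lemma~\ref{lemma:many_particles} and~\eqref{eq:large_deviation} yields, with an extra factor of two absorbed into the constants, the bound~\eqref{eq:bad_simultanous_counpling}.
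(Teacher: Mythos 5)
The paper's proof is \emph{sequential}: it devotes the first half of the matching times $(t_k)$ exclusively to the lower pairing (matching $\bar{\eta}$ into $\eta$), and only after $\bar{\eta}_t\le\eta_t$ has been secured does it introduce $\eta'$ and $\bar{\eta}'$ into the matching during the second half, exploiting the already-merged $(\eta,\bar{\eta})$ pairs as anchors for the upper pairing. Your proposal is \emph{parallel} — both pairings run from time zero — and this is where a genuine gap appears.

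The problem is the base/shadow decomposition you use to reconcile the two pairings. You identify the base layer of $\eta'$ with $\eta$ and subject it to the lower pairing, while the shadow layer $\eta'-\eta$ is subjected to the upper pairing against $\bar{\eta}'-\bar{\eta}$. Suppose both pairings succeed completely at time $t$. The lower pairing then yields $\bar{\eta}_t(x)\le\eta_t(x)$ on $I$ (each $\bar{\eta}$ particle sits under its $\eta$ mate), and the shadow pairing yields $(\eta'_t-\eta_t)(x)\le(\bar{\eta}'_t-\bar{\eta}_t)(x)$ on $I$. But adding these does \emph{not} give $\eta'_t(x)\le\bar{\eta}'_t(x)$: writing $\eta'_t=\eta_t+(\eta'_t-\eta_t)$ and $\bar{\eta}'_t=\bar{\eta}_t+(\bar{\eta}'_t-\bar{\eta}_t)$, the base layers satisfy $\bar{\eta}_t\le\eta_t$, which is the \emph{wrong} direction for the upper domination. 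The two inequalities pull against each other, and the density bookkeeping only works because the barred shadow has an extra $2\epsilon$ — but that surplus is exactly what must compensate the $\epsilon$ gap $\eta_t-\bar{\eta}_t$, and nothing in the shadow-to-shadow matching is designed to do so at the level of individual sites.

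The alternative, not using the layer decomposition for the domination and instead directly pairing every $\eta'$ particle (including the ones that are also $\eta$ particles) with a $\bar{\eta}'$ particle, runs into a clock conflict: an $\eta$ particle is then simultaneously lower-paired with a $\bar{\eta}$ particle and upper-paired (as a member of $\eta'$) with a $\bar{\eta}'$ particle, and these two mates live at different sites. When it merges with one, it switches to $\mathscr{P}_1$, after which the independence argument behind the heat-kernel estimate for meeting the \emph{other} mate is no longer available, since both that particle and its surviving mate are now driven by $\mathscr{P}_1$. Your proposal does not resolve which mate the particle should defer to, nor does it re-derive the per-attempt failure bound $t^{-\sfrac{1}{16}}$ in the post-merge regime.

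This is precisely the obstacle the sequential structure in the paper is designed to avoid: once $\bar{\eta}_t\le\eta_t$ has been arranged in the first phase, the merged $(\eta,\bar{\eta})$ pairs serve automatically as matched $(\eta',\bar{\eta}')$ pairs, and the second phase only needs to find $\bar{\eta}'$-mates for the remaining (unmerged) $\eta'$ particles — a matching whose feasible direction is known only \emph{after} the lower pairing has resolved. Your parallel construction commits to a decomposition at time zero that cannot anticipate which $\eta$ particles will end up paired with $\bar{\eta}$ particles, which is exactly the information needed to make the upper pairing close.
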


\par The construction of the coupling stated in the proposition above is similar to the one in Proposition~\ref{prop:coupling}. Hence, in the proof presented here we only point out the main differences between the constructions.

\begin{proof}
We want to couple two pairs $(\eta_{s}, \eta'_{s})_{s \geq 0}$ and $(\bar{\eta}_{s}, \bar{\eta}'_{s})_{s \geq 0}$ with respective densities $(\rho, \rho')$ and $(\rho-\epsilon, \rho'+\epsilon)$. We also start with two independent pairs of configurations and two copies of the graphical construction of Subsection~\ref{subsec:graphical}, $\mathscr{P}_{1}=(\mathcal{P}_{1}(x))_{x \in \ZZ}$ and $\mathscr{P}_{2}=(\mathcal{P}_{2}(x))_{x \in \ZZ}$.

The pair $(\bar{\eta}_{s}, \bar{\eta}'_{s})_{s \geq 0}$ will evolve with the second copy of the graphical construction $\mathscr{P}_{2}$, up to change of heights in the piles. We then need to set the evolution of $(\eta_{s}, \eta'_{s})_{s \geq 0}$ so that $\bar{\eta}_{t} \leq \eta_{t}$ and $\eta'_{t} \leq \bar{\eta}'_{t}$ inside $I$ with high probability. This will also use the pairing between the configuration and the matching times. We will split the evolution in two parts. First, we obtain $\bar{\eta}_{t} \leq \eta_{t}$ inside $I$ with high probability. When this is done, we continue the construction to ensure the other domination. For the first half of the matching times, we only pair $\eta$ to $\bar{\eta}$ and use the evolution of the coupling from Proposition~\ref{prop:coupling}. This gives $\bar{\eta}_{t} \leq \eta_{t}$ inside $I$ with high probability.

Once this is complete, we try to get $\eta'_{t} \leq \bar{\eta}'_{t}$. This is done using the second half of the matching times. In this case, the matching also includes the particles from the processes $\eta'$ and $\bar{\eta}'$. The coupling is still the same one from Proposition~\ref{prop:coupling}, but we need to be more careful, due to the existence of the particles from $\eta$ and $\bar{\eta}$. Whenever a particle jumps to a new site, we may need to perform a change of the matching. We update the pairing to obey the rules $\eta \leq \eta'$ and $\bar{\eta} \leq \bar{\eta}'$. When a particle jumps, it goes to its correct place in the new pile. If it meets its pair or it is a particle from $\eta$ or $\bar{\eta}$, we update the matching just by changing the heights of the matched particles. This will also grantee that particles that already meet stay together. Figure~\ref{fig:changin_heights} gives an example where an update is necessary.

\begin{comment}
The pair $(\bar{\eta}_{s}, \bar{\eta}'_{s})_{s \geq 0}$ will evolve with the second copy of the graphical construction $\mathscr{P}_{2}$. It remains to set the evolution of $(\eta_{s}, \eta'_{s})_{s \geq 0}$. This coupling also uses the pairing between the configuration and the coupling times. For the first half of the coupling times, we only pair $\eta$ to $\bar{\eta}$ and use the evolution of the coupling from Proposition~\ref{prop:coupling}.

For the second half of the coupling times, the matching includes the particles from the processes $\eta'$ and $\bar{\eta}'$. This part of the coupling is a little bit more delicate, due to the existence of the particles from $\eta$ and $\bar{\eta}$. Whenever a particle jumps to a new site, we may need to perform a change of the matching. We update the pairing to obey the rule that particles from $\eta$ (resp., $\bar{\eta}$) are always below particles of $\eta'$ (resp., $\bar{\eta}'$). When a particle jumps, it goes to its correct place in the new pile. If it meets its pair or it is a particle from $\eta$ or $\bar{\eta}$, we update the matching just by changing the heights of the matched particles to obey that particles that already meet stay together. Figure~\ref{fig:changin_heights} gives an example where an update is necessary.
\end{comment}

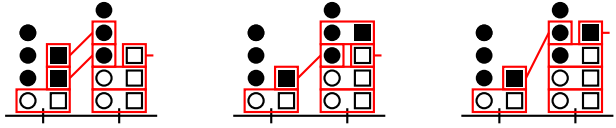
\begin{figure}\label{fig:changin_heights}
\begin{center}
\begin{tikzpicture}

% Grids
\draw[thick] (-4,0)--(-2,0);
\draw[thick] (-1,0)--(1,0);
\draw[thick] (2,0)--(4,0);
\foreach \x in {-3.5,-2.5,-0.5, 0.5, 2.5, 3.5}
		\draw[thick](\x,-0.1)--(\x,0.1);

% Particles

\foreach \x in {-3.5,-0.5, 2.5}
	{
		\draw[thick] (\x-0.2, 0.2) circle (0.1);
		\draw[thick] (\x+0.8, 0.2) circle (0.1);
		\draw[thick] (\x+0.8, 0.5) circle (0.1);
		\fill[thick]  (\x-0.2, 0.5) circle (0.11);
		\fill[thick]  (\x-0.2, 0.8) circle (0.11);
		\fill[thick]  (\x-0.2, 1.1) circle (0.11);
		\fill[thick]  (\x-0.2, 1.1) circle (0.11);
		\fill[thick] (\x+0.8, 0.8) circle (0.11);
		\fill[thick] (\x+0.8, 1.1) circle (0.11);
		\fill[thick] (\x+0.8, 1.4) circle (0.11);
		\draw[thick] (\x+1.1, 0.1) rectangle (\x+1.3, 0.3);
		\draw[thick] (\x+1.1, 0.4) rectangle (\x+1.3, 0.6);
		\draw[thick] (\x+1.1, 0.7) rectangle (\x+1.3, 0.9);
		\draw[thick] (\x+0.1, 0.1) rectangle (\x+0.3, 0.3);
		\fill (\x+0.09, 0.39) rectangle (\x+0.31, 0.61);
	}
	
\fill (-3.5+0.09, 0.69) rectangle (-3.5+0.31, 0.91);
\fill (0.59, 0.99) rectangle (0.81, 1.21);
\fill (3.59, 0.99) rectangle (3.81, 1.21);

% Pairing
\foreach \x in {-3.5,-0.5,2.5}
	{
		\draw[red, thick] (\x-0.35,0.35) rectangle (\x+0.35, 0.05);
		\draw[red, thick] (\x+0.65,0.35) rectangle (\x+1.35, 0.05);
		\draw[red, thick] (\x+0.65,0.35) rectangle (\x+1.35, 0.65);
	}

\draw[red, thick] (-3.5+0.05,0.35) rectangle (-3.5+0.35, 0.65);
\draw[red, thick] (-3.5+0.65,0.65) rectangle (-3.5+0.95, 0.95);
\draw[red, thick] (-3.5+0.35,0.5) -- (-3.5+0.65, 0.8);
\draw[red, thick] (-0.5+0.05,0.35) rectangle (-0.5+0.35, 0.65);
\draw[red, thick] (-0.5+0.65,0.65) rectangle (-0.5+0.95, 0.95);
\draw[red, thick] (-0.5+0.35,0.5) -- (-0.5+0.65, 0.8);
\draw[red, thick] (-3.5+0.05,0.65) rectangle (-3.5+0.35, 0.95);
\draw[red, thick] (-3.5+0.65,0.95) rectangle (-3.5+0.95, 1.25);
\draw[red, thick] (-3.5+0.35,0.8) -- (-3.5+0.65, 1.1);
\draw[red, thick] (0.15,0.95) rectangle (0.85, 1.25);
\draw[red, thick] (3.15,0.65) rectangle (3.85, 0.95);
\draw[red, thick] (2.5+0.05,0.35) rectangle (2.5+0.35, 0.65);
\draw[red, thick] (2.5+0.65,0.95) rectangle (2.5+0.95, 1.25);
\draw[red, thick] (2.5+0.35,0.5) -- (2.5+0.65, 1.1);

\draw[red, thick] (-3.5+1.05,0.65) rectangle (-3.5+1.35, 0.95);
\draw[red, thick] (-0.5+1.05,0.65) rectangle (-0.5+1.35, 0.95);
\draw[red, thick] (2.5+1.05,0.95) rectangle (2.5+1.35, 1.25);
\draw[red, thick] (-3.5+1.35, 0.8) -- (-3.5+1.45, 0.8);
\draw[red, thick] (-0.5+1.35, 0.8) -- (-0.5+1.45, 0.8);
\draw[red, thick] (2.5+1.35, 1.1) -- (2.5+1.45, 1.1);

\end{tikzpicture}
\caption{A pairing where an update is necessary. Notice that, after the jump, in order to obey that particles from density $\rho$ stay always below particles from the configuration with density $\rho'$, we change the pairing in the pile.}
\end{center}
\end{figure}

It is easy to verify that all the estimates in the proof of Proposition~\ref{prop:coupling} remain valid in this case, up to a change of constants.
\end{proof}

\subsection{The box notation}\label{subsec:box_notation_2}
~
\par We now begin to introduce the notation for the proof of Theorem~\ref{teo:positive_velocity}. Some notation was already introduced in Subsection~\ref{subsec:box_notation} and we recall it here too.

\par In this subsection we write $I_{k}=\left[-\frac{\ell_{k}}{4}L_{k}^{2}, \frac{\ell_{k}}{4}L_{k}^{2}\right]$ and, for $m =(x,sL_{k}) \in \ZZ \times L_{k}\NN_{0}$, let $I_{k}(m)=x+I_{k}$.

\par We say that a path $\gamma:[0,L_{k}] \to \ZZ$ is $\eta$-allowed (for the scale $k$) if
\begin{enumerate}
\item $\gamma(0)=0$;
\item $\gamma(t) \in I_{k}$, for all $t \in [0, L_{k}]$;
\item $\gamma$ is a nearest-neighbor path;
\item $\gamma$ only moves when a particle of $\eta$ jumps from that site.
\end{enumerate}
Being $\eta$-allowed is a non-decreasing property. This means that if $\gamma$ is $\eta$-allowed and $\eta \preccurlyeq \tilde{\eta}$, then $\gamma$ is also $\tilde{\eta}$-allowed.

\par With high probability, the front of the infection, $r_{t}$, is an $\eta$-allowed path. In order to prove that it moves to the right with positive speed, we will verify that it shares a site with two or more particles a positive proportion of time. In these times, $r_{t}$ has a drift to the right. However, instead of investigating directly these times, we introduce a quantity that measures the amount of time a path is within distance $R$ from at least two particles. For $R>0$, $t > 0$ and a c\`adl\`ag path $\gamma:[0,t] \to \ZZ$, let
\begin{equation}
V^{R,t}_{\eta}(\gamma)=\frac{1}{t}\left| \left\{
s \in [0, t] : \sum_{x = -\gamma(s)-R}^{\gamma(s)+R} \eta_{s}(x) \geq 2 \right\}\right|,
\end{equation}
where $|A|$ denotes the Lebesgue measure of the set $A$.

\par The bad event we are interested in here deals with the existence of an $\eta$-allowed path $\gamma$ with $V_{\eta'}^{R, L_{k}}(\gamma)$ small.

\begin{comment}
For technical reasons, we change the quantity $V_{\eta'}^{R,t}(\gamma)$ and work with the following quantity, defined of every $k \in \NN_{0}$,
\begin{equation}
\bar{V}^{R,k}_{\eta}(\gamma)=\frac{1}{L_{k}}\left| \left\{
s \in [0, L_{k}] : \sum_{x = -\gamma(s)-R}^{\gamma(s)+R} \eta_{s}(x) \charf{[-\ell_{k} L_{k}^{2},\ell_{k} L_{k}^{2}]}(x) \geq 2 \right\}\right|.
\end{equation}
\end{comment}

\par Observe that, if $\eta \preccurlyeq \tilde{\eta}$, then $V_{\eta}^{R,k}(\gamma) \leq V_{\tilde{\eta}}^{R,k}(\gamma)$. This will allow us to use the stronger version of the zero range process decoupling, Proposition~\ref{prop:simultaneous_decoupling}.

\par In a similar flavor of~\eqref{eq:velocities}, we introduce the sequence
\begin{equation}\label{eq:density_occupied_sites}
\epsilon_{0}=\epsilon>0 \qquad \text{and} \qquad \epsilon_{k+1}=\epsilon_{k}\left(1-\frac{1}{(k+1)^{2}}\right).
\end{equation}
Observe that the sequence above is non-increasing and $\epsilon_{\infty}= \lim \epsilon_{k}$ is positive. Consider the sequence of events
\begin{equation}\label{eq:renorm_2_event}
F_{k}^{R}=\left\{
(\eta, \eta'): \begin{array}{cl} \text{there exists a path $\gamma$ that is} \\ \text{$\eta'$-allowed for scale $k$ and } V_{\eta}^{R,L_{k}}(\gamma) \leq \epsilon_{k}
\end{array}
\right\}
\end{equation}
The events $F_{k}^{R}$ are non-increasing in $\eta$ and non-decreasing in $\eta'$. Besides, when $R \leq \frac{3\ell_{k}}{4}L_{k}$ the event $F_{k}^{R}$ has support in $B_{k}$, defined in~\eqref{eq:renormalisation_box}. For $m =(x, sL_{k}) \in \ZZ \times L_{k} \NN_{0}$, denote by $F_{k}^{R}(m)$ the translation of the event $F_{k}^{R}$ by the vector $m$.

\par For some fixed $\rho_{0}>0$, recall we defined the sequence $(\rho_{k})_{k \in \NN_{0}}$ in~\eqref{eq:density_scales} by setting $\rho_{k}=\rho_{k+1}(1+L_{k}^{-\sfrac{1}{16}})$. We set $\rho_{0}'=\rho_{0}$ and define
\begin{equation}\label{eq:density_scales_2}
\rho_{k+1}'=\rho_{k}'(1+L_{k}^{-\sfrac{1}{16}}).
\end{equation}
In this case, $(\rho'_{k})_{k \in \NN_{0}}$ is increasing and $\rho'_{\infty}=\lim \rho'_{k}$ exists and is finite.

\par Finally, define the probabilities
\begin{equation}\label{eq:q_k}
q_{k}=\PP_{\rho_{k}, \rho_{k}'}[F_{k}^{R}].
\end{equation}

\subsection{Estimates on $q_{k}$}\label{subsec:renorm_2}

\par We now focus on the bounds of $q_{k}$. This will be done in a similar way as in Subsection~\ref{subsec:renorm}, and hence some proofs are omitted.

\par The first thing we need to do is to relate the properties of being $\eta'$-allowed for different scales. We prove a lemma that bounds the probability of the following event
\begin{equation}
G_{k}=\left\{\begin{array}{cl}\text{all paths } \gamma \text{ that are } \eta \text{-allowed for the scale} \\ k+1 \text{ do not leave } I_{k} \text{ before time } L_{k}
\end{array}\right\}.
\end{equation}

%%%%%
\newconstant{c:bound_G}
%%%%%

\begin{lemma}\label{lemma:renorm_2.1}
There exists a positive constant $\useconstant{c:bound_G}=\useconstant{c:bound_G}(\rho_{-}, \rho_{+})$ such that, for all $\rho \in [\rho_{\infty}, \rho'_{\infty}]$ and $k \geq 0$, we have
\begin{equation}
\PP_{\rho}[G_{k}^{c}] \leq \useconstant{c:bound_G}e^{-\useconstant{c:bound_G}^{-1}L_{k}}.
\end{equation}
\end{lemma}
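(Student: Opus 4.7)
My plan centers on a maximal-reach process. Define
\[
u^+_t = \sup\{\gamma(t) : \gamma \text{ is an } \eta\text{-allowed path for scale } k+1\},
\]
and symmetrically $u^-_t$. Since any admissible path can always stay still, $u^+$ is non-decreasing in $t$. Moreover, $u^+$ can increase from $x$ to $x+1$ precisely at a time $s$ at which the graphical construction fires a jump of $\eta$ at site $x = u^+_{s-}$: the path realizing $u^+_{s-} = x$ can be extended by stepping right at that instant, and conversely no other mechanism allows $u^+$ to increase. By the intermediate value property of nearest-neighbor paths, the occurrence of $G_k^c$ implies $u^+_{L_k} \geq \lceil \ell_k L_k^2/4 \rceil + 1$ or $u^-_{L_k} \leq -\lceil \ell_k L_k^2/4\rceil - 1$, so by symmetry it suffices to bound the first event.

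Next I would control the occupation numbers in the region the path can visit before leaving $I_k$. Applying Lemma~\ref{lemma:many_particles} with $u = t = L_k$ at each site $x \in I_k$ (valid by translation invariance and since $\rho \in [\rho_\infty, \rho'_\infty]$ is contained in a compact subinterval of $(0, \rho_+)$) and a union bound, the good event
\[
E = \left\{\sup_{0 \leq s \leq L_k} \eta_s(x) \leq A := A(L_k, L_k),\ \forall\, x \in I_k\right\}
\]
satisfies $\PP_\rho[E^c] \leq c\,L_k^{7/2}e^{-c^{-1}L_k}$, using $|I_k \cap \ZZ| = O(L_k^{5/2})$ and $A = O(L_k)$. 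Absorbing the polynomial factor into the exponential yields $\PP_\rho[E^c] \leq c' e^{-(c')^{-1}L_k}$.

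On $E$, the instantaneous rate at which $\eta$ jumps from any site $x \in I_k$ is bounded by $g(\eta_s(x)) \leq g(A) \leq \Gamma_+ A$. Using the graphical construction of Subsection~\ref{subsec:graphical}, I would introduce an auxiliary process $\tilde u^+_t$ which moves from $\tilde u^+$ to $\tilde u^+ + 1$ at \emph{every} Poisson arrival at its current site whose height coordinate lies in $\{1, \dots, A\}$ (ignoring the $u$-threshold condition~\eqref{eq:jump_condition}). The Poisson processes at distinct sites are independent with total rate $\Gamma_+ A$ each, so the inter-move waiting times of $\tilde u^+$ are i.i.d.\ $\mathrm{Exp}(\Gamma_+ A)$ and hence $\tilde u^+_t \sim \poisson(\Gamma_+ A t)$. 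On $E$ every actual jump of $\eta$ at a site visited by $u^+$ corresponds to such a Poisson arrival, so $u^+_t \leq \tilde u^+_t$ pathwise. Therefore
\[
\PP_\rho\bigl[u^+_{L_k} \geq \tfrac{\ell_k}{4}L_k^2 + 1,\ E\bigr] \leq \PP\bigl[\poisson(\Gamma_+ A L_k) \geq \tfrac{\ell_k}{4}L_k^2 + 1\bigr].
\]
Since $\Gamma_+ A L_k = O(L_k^2)$ and the threshold is of order $L_k^{5/2}$, a standard Chernoff bound for Poisson gives decay of order $\exp(-c L_k^{5/2} \log L_k)$, which is far smaller than $e^{-L_k}$ for large $k$. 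Combining this with the bound on $\PP_\rho[E^c]$ and the symmetric estimate for $u^-$, and enlarging the constant to absorb small scales, completes the proof. The only delicate step is making the pathwise coupling $u^+_t \leq \tilde u^+_t$ on $E$ rigorous via the graphical construction; the remaining estimates are routine tail bounds.
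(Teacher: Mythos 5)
Your proposal matches the paper's argument: the paper takes the path $\gamma_{+}$ (resp.\ $\gamma_{-}$) that jumps right (resp.\ left) at every opportunity, which is exactly your extremal process $u^{+}$ (resp.\ $u^{-}$); it then uses Lemma~\ref{lemma:many_particles} and a union bound over sites of $I_{k}$ to control occupation numbers, and on the good event dominates the number of jumps of $\gamma_{\pm}$ by a $\poisson(\Gamma_{+}L_{k}A(L_{k},L_{k}))$ random variable, just as you do. The only cosmetic differences are that the paper uses the crude bound $L_{k}^{3}$ for $|I_{k}|$ where you compute $O(L_{k}^{5/2})$, and it invokes the generic Poisson tail bound from~\eqref{eq:large_deviation} rather than a sharper Chernoff estimate; both give more than enough room.
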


\begin{proof}
We consider two paths that are $\eta$-allowed in the scale $k+1$: $\gamma_{+}$, that always jumps to the right, and $\gamma_{-}$, that always jumps to the left. Observe that, if $X \sim \poisson(\Gamma_{+} L_{k} A(L_{k},L_{k}))$, with $A(\cdot, \cdot)$ as in~\eqref{lemma:many_particles}, then, for all $k$ large,
\begin{equation}
\begin{split}
\PP_{\rho}[G_{k}^{c}]& \leq \PP_{\rho}\left[\begin{array}{cl} \gamma_{+} \text{ or } \gamma_{-} \text{ leaves } I_{k} \\
\text{before time } L_{k}
\end{array}\right] \\
& \leq L_{k}^{3}\PP_{\rho}\left[\begin{array}{cl}
\eta_{s}(0) \geq A(L_{k}, L_{k}), \\ \text{for some } s \in [0,L_{k}]
\end{array}\right]\\
& \qquad +2\PP\left[X \geq \frac{\ell_{k}}{4} L_{k}^{2}\right] \\
& \leq \useconstant{c:quadratic_displacement}(L_{k}^{3}+1)e^{-\useconstant{c:many_particles}^{-1}L_{k}}+e^{-cL_{k}} \leq \useconstant{c:bound_G}e^{-\useconstant{c:bound_G}^{-1}L_{k}}.
\end{split}
\end{equation}
By possibly increasing the value of $\useconstant{c:bound_G}$, we obtain that the estimate above is true for all $k \geq 0$ and conclude the proof.
\end{proof}

\par For $m =(x, sL_{k}) \in \ZZ \times L_{k}\NN_{0}$, if we define the translation
\begin{equation}
G_{k}(m)=\left\{\begin{array}{cl}\text{all paths } \gamma \text{ that are } \eta \text{-allowed for scale } k+1 \text{ and touch } m \\ \text{satisfy that } \gamma|_{[sL_{k}, (s+1)L_{k}]} \text{ does not leave } I_{k}(m)
\end{array}\right\},
\end{equation}
we easily obtain the bound $\PP_{\rho}[G_{k}(m)] \leq \PP_{\rho}[G_{k}]$.

%%%%%
\newconstant{c:renorm_2}
%%%%%

\par We focus now on the probabilities $q_{k}$. As before, the first step is to obtain a recursive inequality that relates $q_{k}$ and $q_{k+1}$.
\begin{lemma}\label{lemma:renorm_2.2}
There exists $k_{0}$ such that, for all $k \geq k_{0}$ and  $ 1 \leq R \leq \frac{3\ell_{k}}{4}L_{k}$,
\begin{equation}
q_{k+1}\leq \useconstant{c:renorm_2}L_{k+1}^{28}[q_{k}^{4}+e^{-\useconstant{c:renorm_2}^{-1}\rho_{\infty}^{2}L_{k}^{\sfrac{1}{8}}}].
\end{equation}
\end{lemma}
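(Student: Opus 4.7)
The proof proceeds by closely mimicking Lemma~\ref{lemma:renorm_1}, with Proposition~\ref{prop:simultaneous_decoupling} replacing Theorem~\ref{teo:vertical_decoupling}. The first step is to show that the family of events is cascading: on $F_{k+1}^{R}$ and on the high-probability event $\bigcap_{m\in M_{k}}G_{k}(m)$, there must exist at least seven translates $m_{1},\ldots,m_{7}\in M_{k}$ with pairwise distinct time coordinates for which $F_{k}^{R}(m_{i})$ holds. Given a witness path $\gamma$ for $F_{k+1}^{R}$, I would slice $[0,L_{k+1}]$ into $L_{k}^{2}$ windows of length $L_{k}$ and let $V_{j}$ denote the local $V_{\eta}^{R,L_{k}}$ of the restricted path on the $j$-th window, translated by $m_{j}=(\gamma(jL_{k}),jL_{k})\in M_{k}$. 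The identity
\begin{equation*}
L_{k}^{-2}\sum_{j=0}^{L_{k}^{2}-1} V_{j} \;=\; V_{\eta}^{R,L_{k+1}}(\gamma) \;\leq\; \epsilon_{k+1} \;=\; \epsilon_{k}\bigl(1-(k+1)^{-2}\bigr)
\end{equation*}
combined with a Markov-type count forces at least $L_{k}^{2}(k+1)^{-2}$ indices to satisfy $V_{j}\leq \epsilon_{k}$, and this exceeds seven as soon as $k\geq k_{0}$ is chosen sufficiently large. On $G_{k}(m_{j})$ the corresponding sub-path, suitably translated, stays in $I_{k}(m_{j})$ and is $\eta'$-allowed for scale $k$, so it witnesses $F_{k}^{R}(m_{j})$.

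Once cascading is in hand, I would pick seven good indices $j_{1}<\cdots<j_{7}$; the time coordinates $s_{i}=j_{i}L_{k}$ are distinct multiples of $L_{k}$, so $s_{i+2}-s_{i}\geq 2L_{k}$, and the hypothesis $R\leq \tfrac{3}{4}\ell_{k}L_{k}$ guarantees that each $F_{k}^{R}(m_{i})$ has support in a space-time box of linear size at most $\ell_{k}L_{k}^{2}$, well within the vertical separation available. I then perform three successive applications of Proposition~\ref{prop:simultaneous_decoupling} with sprinkling parameter $\epsilon=\tfrac{1}{3}\rho_{\infty}L_{k}^{-1/16}$, following the pattern in Lemma~\ref{lemma:renorm_1}: first split $F_{k}^{R}(m_{1})$ from $\bigcap_{i=3}^{7}F_{k}^{R}(m_{i})$, then $F_{k}^{R}(m_{3})$ from $\bigcap_{i=5}^{7}F_{k}^{R}(m_{i})$, and finally $F_{k}^{R}(m_{5})$ from $F_{k}^{R}(m_{7})$. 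Each application introduces a factor of the form $\mathbb{E}_{\rho_{k+1}-\epsilon,\rho'_{k+1}+\epsilon}[F_{k}^{R}(m_{i})]$, which is absorbed into $q_{k}$ via the monotonicity of $F_{k}^{R}$ in each argument together with the designed gap between the density sequences $(\rho_{k})$ and $(\rho'_{k})$. The resulting product is $q_{k}^{4}$, and the three decoupling errors are of order $L_{k+1}^{3}\exp(-c^{-1}\rho_{\infty}^{2}L_{k}^{1/8})$. A union bound over the at most $|M_{k}|^{7}\leq c L_{k+1}^{28}$ admissible seven-tuples, together with the Lemma~\ref{lemma:renorm_2.1} bound $\mathbb{P}[\bigcup_{m\in M_{k}}G_{k}(m)^{c}]\leq \useconstant{c:bound_M}L_{k+1}^{4}\useconstant{c:bound_G}e^{-\useconstant{c:bound_G}^{-1}L_{k}}$ absorbed into the error term, delivers the claimed recursive inequality.

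The main obstacle is the cascading step. The multiplicative design $\epsilon_{k+1}=\epsilon_{k}(1-(k+1)^{-2})$ is tuned so that the pigeonhole count yields an unbounded number of good sub-windows as $k\to\infty$ while keeping $\epsilon_{\infty}>0$, and one must verify that this quantitative slack, together with the telescoping density gaps $\rho_{k}-\rho_{k+1}$ and $\rho'_{k+1}-\rho'_{k}$, is consistent with the error produced by Proposition~\ref{prop:simultaneous_decoupling}. Beyond that, the bookkeeping of the translations and the check that each $F_{k}^{R}(m_{i})$ lives in the appropriate space-time region (where the constraint $R\leq \tfrac{3}{4}\ell_{k}L_{k}$ becomes essential) is routine.
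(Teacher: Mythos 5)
Your proposal is correct and follows essentially the same approach as the paper: you establish the cascading property of the events $F_k^R$, then apply Proposition~\ref{prop:simultaneous_decoupling} three times to the seven well-separated sub-boxes and close with a union bound over $|M_k|^7\lesssim L_{k+1}^{28}$ tuples and the $G_k$ error from Lemma~\ref{lemma:renorm_2.1}. The only cosmetic difference is in the pigeonhole step: the paper argues by contradiction (if at most six windows are "good" then $V^{R,L_{k+1}}_{\eta}(\gamma) > \epsilon_k(1-6/L_k^2)\geq\epsilon_{k+1}$, impossible), while you use a direct Markov count giving strictly more than $L_k^2(k+1)^{-2}\geq 6$, hence at least seven, good windows — the same computation read in the opposite direction, and the paper's choice $\tfrac{1}{6(k+1)^2}\geq L_k^{-2}$ of $k_0$ serves both.
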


\begin{proof}
The proof is very similar as the one of Lemma~\ref{lemma:renorm_1}, but we use the stronger version of the decoupling in this case. Here, we only prove that the events $F_{k}^{R}$ are cascading.

Fix $k_{0} \in \NN_{0}$ such that, for all $k \geq k_{0}$
\begin{equation}
\frac{1}{6(k+1)^{2}} \geq \frac{1}{L_{k}^{2}}.
\end{equation}

Fix $k \geq k_{0}$, a value $ 1 \leq R \leq \frac{\ell_{k}}{2}L_{k}$ and assume we are in $F_{k+1}^{R}$. We claim that
\begin{equation*}
\begin{array}{cl}
\text{either } G_{k}(m)^{c} \text{ holds for some } m \in M_{k} \text{ or there are} \\
\text{seven elements } m_{i}=(x_{i},s_{i})\in M_{k}, 1 \leq i \leq 7, \text{ with}\\
s_{i} \neq s_{j}, \text{ if } i \neq j, \text{ such that } F_{k}^{R}(m) \text{ holds}.
\end{array}
\end{equation*}

Once again, the proof follows by contradiction. Assume we are in the event $F_{k+1}^{R}$, that $G_{k}(m)$ holds for all $m \in M_{k}$, and that $F_{k}^{R}(m)$ holds for at most six values of $m \in M_{k}$ with different time coordinates.

Observe that, if $F_{k+1}^{R}$ holds, there exists an $\eta'_{k+1}$-allowed path $\gamma$ with $V^{R,L_{k+1}}_{\eta_{k+1}}(\gamma) \leq \epsilon_{k+1}$. Besides, for all but at most six values of $ 0 \leq s \leq L_{k}^{2}$, the path $\gamma_{s}=\gamma|_{[sL_{k}, (s+1)L_{k}]}$ is $\eta'_{k}$-allowed and $V^{R,L_{k}}_{\eta_{k}}(\gamma_{s}) > \epsilon_{k}$. Observe now that
\begin{equation}
\begin{split}
V^{R,L_{k+1}}_{\eta_{k+1}}(\gamma) & =\frac{L_{k}}{L_{k+1}}\sum_{s=0}^{L_{k}^{2}-1}V^{R,L_{k}}_{\eta_{k+1}}(\gamma_{s}) \\
& \geq \frac{L_{k}}{L_{k+1}}\sum_{s=0}^{L_{k}^{2}-1}V^{R,L_{k}}_{\eta_{k}}(\gamma_{s}) \\
& > \frac{L_{k}}{L_{k+1}}\epsilon_{k}(L_{k}^{2}-6) \\
& \geq \epsilon_{k}\left(1-\frac{6}{L_{k}^{2}}\right) \geq \epsilon_{k+1},
\end{split}
\end{equation}
a contradiction.
\end{proof}

\par Observe that Lemma~\ref{lemma:renorm_2} is also valid for the quantities $q_{k}$. The proof remains the same and we omit it here.
\begin{lemma}\label{lemma:renorm_2.3}
There exists $k_{1} \geq k_{0}$ such that, for $k \geq k_{1}$ and any choice of $v_{0}$, if
\begin{equation}\label{eq:renorm_trigger}
q_{k} \leq e^{-\log^{\sfrac{5}{4}}L_{k}},
\end{equation}
then
\begin{equation}
q_{k+1} \leq e^{-\log^{\sfrac{5}{4}}L_{k+1}}.
\end{equation}
\end{lemma}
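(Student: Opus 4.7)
The strategy is to mimic the proof of Lemma~\ref{lemma:renorm_2} essentially verbatim, now using the recursive inequality from Lemma~\ref{lemma:renorm_2.2} in place of the one from Lemma~\ref{lemma:renorm_1}. The key numerical fact I will exploit is that $3^{\sfrac{5}{4}} < 4$, so that $\log^{\sfrac{5}{4}} L_{k+1} = 3^{\sfrac{5}{4}} \log^{\sfrac{5}{4}} L_k \leq 4 \log^{\sfrac{5}{4}} L_k$ (using $L_{k+1} = L_k^3$).

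Assume $q_k \leq e^{-\log^{\sfrac{5}{4}} L_k}$ for some $k \geq k_0$. Applying Lemma~\ref{lemma:renorm_2.2} and multiplying by $e^{\log^{\sfrac{5}{4}} L_{k+1}}$, I obtain
\begin{equation*}
e^{\log^{\sfrac{5}{4}} L_{k+1}} q_{k+1}
\leq \useconstant{c:renorm_2} L_{k+1}^{28} \bigl[ e^{-4 \log^{\sfrac{5}{4}} L_k} + e^{-\useconstant{c:renorm_2}^{-1} \rho_\infty^2 L_k^{\sfrac{1}{8}}} \bigr] e^{3^{\sfrac{5}{4}} \log^{\sfrac{5}{4}} L_k}
\leq \useconstant{c:renorm_2} L_{k+1}^{28} \bigl[ e^{(-4+3^{\sfrac{5}{4}}) \log^{\sfrac{5}{4}} L_k} + e^{-\useconstant{c:renorm_2}^{-1} \rho_\infty^2 L_k^{\sfrac{1}{8}} + 3^{\sfrac{5}{4}} \log^{\sfrac{5}{4}} L_k} \bigr].
\end{equation*}
Since $-4 + 3^{\sfrac{5}{4}} < 0$, the first term in the bracket decays faster than any polynomial in $L_k$ (and hence faster than $L_{k+1}^{28} = L_k^{84}$). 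For the second term, $L_k^{\sfrac{1}{8}}$ dominates $\log^{\sfrac{5}{4}} L_k$ for large $k$, so this term is also negligible compared to $L_{k+1}^{-28}$.

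Therefore, choosing $k_1 \geq k_0$ large enough that
\begin{equation*}
\useconstant{c:renorm_2} L_{k+1}^{28} \bigl[ e^{(-4+3^{\sfrac{5}{4}}) \log^{\sfrac{5}{4}} L_k} + e^{-\useconstant{c:renorm_2}^{-1} \rho_\infty^2 L_k^{\sfrac{1}{8}} + 3^{\sfrac{5}{4}} \log^{\sfrac{5}{4}} L_k} \bigr] < 1
\end{equation*}
for all $k \geq k_1$, the conclusion $q_{k+1} \leq e^{-\log^{\sfrac{5}{4}} L_{k+1}}$ follows. There is no real obstacle here: the argument is purely a matter of verifying that the polynomial factor $L_{k+1}^{28}$ introduced by the recursive inequality is absorbed by the super-polynomial decay coming from the fourth power of the inductive hypothesis and from the sprinkling error term. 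The only reason one needs $k_1$ large is to make these asymptotic inequalities hold concretely.
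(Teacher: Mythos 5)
Your proof is correct and follows exactly the same route the paper intends: the paper explicitly omits the proof of Lemma~\ref{lemma:renorm_2.3}, stating that it is identical to the proof of Lemma~\ref{lemma:renorm_2}, and your argument is a faithful reproduction of that proof with Lemma~\ref{lemma:renorm_2.2} in place of Lemma~\ref{lemma:renorm_1}.
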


\par We now prove an analogous of Lemma~\ref{lemma:renorm_3}: we will verify that, if $\epsilon_{0}$ is small enough and $R$ and $k$ are large enough, then~\eqref{eq:renorm_trigger} holds.

\begin{lemma}\label{lemma:renorm_2.4}
There exists $k_{2} \geq k_{1}$, $R \leq \frac{\ell_{k_{2}}}{2} L_{k_{2}}$ and $\epsilon_{0}>0$ such that $q_{k_{2}} \leq e^{-\log^{\sfrac{5}{4}}L_{k_{2}}}$.
\end{lemma}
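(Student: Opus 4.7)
The plan is to choose $k_{2}$, $R$, and $\epsilon_{0}$ so that the bad event $F_{k_{2}}^{R}$ is essentially empty: I will show that with probability at least $1 - e^{-\log^{\sfrac{5}{4}} L_{k_{2}}}$, the ball of radius $R$ around \emph{every} site $x \in I_{k_{2}}$ contains at least two $\eta$-particles at \emph{every} time $s \in [0, L_{k_{2}}]$. On this event $V^{R,L_{k_{2}}}_{\eta}(\gamma) = 1$ for every path $\gamma$ with $\gamma([0,L_{k_{2}}]) \subset I_{k_{2}}$, in particular for every $\eta'$-allowed path, so $F_{k_{2}}^{R}$ fails regardless of the choice of $\epsilon_{0} \in (0,1)$ (recall $\epsilon_{k_{2}} \leq \epsilon_{0}$). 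Since the event is measurable with respect to $\eta$ alone, and the $\eta$-marginal under $\PP_{\rho_{k_{2}}, \rho'_{k_{2}}}$ is the zero range process with density $\rho_{k_{2}} \geq \rho_{\infty}$, the required bound on $q_{k_{2}}$ reduces to a statement about a single zero range process with positive density.

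The first ingredient is a static concentration estimate. By translation invariance and Proposition~\ref{prop:concentration} applied to the independent coordinates of $\mu_{\rho_{k_{2}}}$, for any fixed $(s,x)$ and any threshold $C \leq \rho_{\infty}R/2$,
\[
\PP_{\rho_{k_{2}}}\Bigl[\sum_{y=x-R}^{x+R}\eta_{s}(y) < C\Bigr] \leq e^{-c\rho_{\infty}^{2}R}.
\]
Take $R = C_{0}\log^{\sfrac{5}{4}}L_{k_{2}}$ with $C_{0}$ large, $C = \rho_{\infty}R/4$, and discretize time into steps of length $\Delta$ to be specified. A union bound over the grid $\{j\Delta\} \times I_{k_{2}}$ bounds the probability that some grid point has count below $C$ by $(L_{k_{2}}/\Delta)|I_{k_{2}}|\,e^{-c\rho_{\infty}^{2}R}$, which is at most $\tfrac{1}{3}e^{-\log^{\sfrac{5}{4}}L_{k_{2}}}$ once $C_{0}$ and $k_{2}$ are sufficiently large.

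The delicate step is controlling the fluctuations of the count between two consecutive grid times. Combining the graphical construction with Lemma~\ref{lemma:many_particles}, on the event that no pile inside the relevant space-time slab exceeds $M = C_{1}\log^{\sfrac{5}{4}}L_{k_{2}}$ (whose failure probability can also be made $\leq \tfrac{1}{3}e^{-\log^{\sfrac{5}{4}}L_{k_{2}}}$ by a further union bound, using Lemma~\ref{lemma:many_particles}), the number of Poisson marks capable of producing a jump in the space-time box $[x-R,x+R] \times [j\Delta,(j+1)\Delta]$ is stochastically dominated by a Poisson random variable of rate $\Gamma_{+}(2R+1)M\Delta$. Choosing $\Delta = c/(RM)$ for a small universal constant $c$, this Poisson is smaller than $C/2$ except with probability $\leq e^{-c'C}$; summing this bound over the polynomially many grid intervals and sites contributes another $\tfrac{1}{3}e^{-\log^{\sfrac{5}{4}}L_{k_{2}}}$.

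On the intersection of these three good events, $\sum_{y=x-R}^{x+R}\eta_{s}(y) \geq C - C/2 = C/2 \geq 2$ for all $(s,x) \in [0,L_{k_{2}}] \times I_{k_{2}}$, hence $F_{k_{2}}^{R}$ cannot occur and $q_{k_{2}} \leq e^{-\log^{\sfrac{5}{4}}L_{k_{2}}}$. The size constraint $R \leq \ell_{k_{2}}L_{k_{2}}/2 \sim L_{k_{2}}^{\sfrac{3}{2}}/2$ is satisfied trivially since $R$ is polylogarithmic in $L_{k_{2}}$. The main obstacle I foresee is the joint tuning of the four parameters $R, C, M, \Delta$: the three conditions $C \leq \rho_{\infty}R/2$, $\Gamma_{+}RM\Delta \ll C$, and (polynomial grid size) $\cdot\, e^{-cR} \leq e^{-\log^{\sfrac{5}{4}}L_{k_{2}}}$ must hold simultaneously, and one should verify that taking $R, C, M$ all proportional to $\log^{\sfrac{5}{4}}L_{k_{2}}$ with appropriate prefactors, and $\Delta \sim (\log L_{k_{2}})^{-\sfrac{5}{2}}$, indeed reconciles all of them once $k_{2}$ is large enough.
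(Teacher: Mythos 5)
Your proof is correct in substance, but it takes a genuinely different route from the paper's. The paper exploits the normalisation $\gamma(0)=0$ built into the definition of an allowed path: with the near-maximal choice $R = R_k = \tfrac{\ell_k}{2}L_k$, the event $\tilde F_k = \{V^{R_k,L_k}_\eta(\gamma)=0 \text{ for some allowed } \gamma\}$ forces $\sum_{|y|\le R_k}\eta_0(y)\le 1$, a single-site, single-time estimate whose probability decays like $e^{-cL_k}$; a monotone continuity argument in $\epsilon_{k_2}$ then transfers this to $F_{k_2}^{R_{k_2}}$ for $\epsilon_0$ small. You instead take $R$ polylogarithmic and prove the much stronger statement that, with probability $\ge 1-e^{-\log^{5/4}L_{k_2}}$, \emph{every} window of radius $R$ around \emph{every} site of $I_{k_2}$ contains at least two particles at \emph{every} time in $[0,L_{k_2}]$, which forces $V\equiv 1$ and kills $F_{k_2}^R$ outright. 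Your approach is heavier (a space-time union bound with time discretisation, control of pile heights, and a Poisson-dominance estimate for between-grid fluctuations) and gives only the target decay $e^{-\log^{5/4}L_{k_2}}$ rather than the $e^{-cL_{k_2}}$ the paper obtains, but it has two compensating virtues: it works for any $\epsilon_0\in(0,1)$ without a limiting argument, and it never uses the special role of $s=0$, so it would survive a change of normalisation in the definition of allowed paths. One point you should make explicit: to get $M\sim\log^{5/4}L_{k_2}$ from Lemma~\ref{lemma:many_particles} you must apply it on the short sub-intervals $[j\Delta,(j+1)\Delta]$ (so that $A(u,\Delta)\sim 2u(\rho+1)$ is polylogarithmic) and then union-bound over the grid; applying the lemma once over $[0,L_{k_2}]$ would only give $M\sim L_{k_2}$. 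Also, only jumps from the two boundary sites $x\pm R$ decrease the window count, so the relevant Poisson rate is $2\Gamma_+M\Delta$ rather than $\Gamma_+(2R+1)M\Delta$; your cruder bound still closes, but the sharper one makes the tuning of $\Delta$ less delicate.
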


\begin{proof}
First we compute
\begin{equation}
\begin{split}
\PP_{\rho}\left[\sum_{-\frac{\ell_{k}}{2} L_{k} \leq x \leq \frac{\ell_{k}}{2} L_{k}}\eta_{0}(x) \leq 1\right] & \leq \PP_{\rho}\left[\begin{array}{cl}
\text{there exists } x \in I_{k} \text{ such that } \eta_{0}(y)=0, \\ \text{for all } y \in [-\frac{\ell_{k}}{2} L_{k}, \frac{\ell_{k}}{2} L_{k}] \setminus \{x\}
\end{array}\right]\\
& \leq L_{k}^{\sfrac{3}{2}}\PP_{\rho}[\eta_{0}(0)=0]^{\ell_{k} L_{k}} \leq e^{-cL_{k}}.
\end{split}
\end{equation}

Now define, for $R_{k}=\frac{\ell_{k}}{2} L_{k}$,
\begin{equation}
\tilde{F}_{k}=\left\{
(\eta, \eta'): \begin{array}{cl} \text{there exists a path $\gamma$ that is} \\ \text{$\eta'$-allowed for scale $k$ and } V^{R_{k},L_{k}}_{\eta}(\gamma) =0
\end{array}
\right\},
\end{equation}
and observe that
\begin{equation}
\PP_{\rho_{k}, \rho_{k}'}[\tilde{F}_{k}] \leq \PP_{\rho_{\infty}, \rho_{\infty}'}[\tilde{F}_{k}] \leq \PP_{\rho_{\infty}}\left[\sum_{-\frac{\ell_{k}}{2} L_{k} \leq x \leq \frac{\ell_{k}}{2} L_{k}}\eta_{0}(x) \leq 1\right] \leq e^{-cL_{k}}.
\end{equation}

Fix $k_{2} \geq k_{1}$ such that $2e^{-cL_{k_{2}}} \leq e^{-\log^{\sfrac{5}{4}}L_{k_{2}}}$. Since $\lim_{\epsilon_{k_{2}} \to 0}
\PP_{\rho_{k_{2}}, \rho_{k_{2}}'}[F_{k_{2}}^{R_{k_{2}}}]= \PP_{\rho_{k_{2}}, \rho_{k_{2}}'}[\tilde{F}_{k_{2}}]$, we can choose $\epsilon_{k_{2}}$ such that $\PP_{\rho_{k_{2}}, \rho_{k_{2}}'}[F_{k_{2}}^{R_{k_{2}}}] \leq \PP_{\rho_{k_{2}}, \rho_{k_{2}}'}[\tilde{F}_{k_{2}}]+e^{-cL_{k_{2}}}$ and conclude that
\begin{equation}
\PP_{\rho_{k_{2}}, \rho_{k_{2}}'}[F_{k_{2}}^{R_{k_{2}}}] \leq \PP_{\rho_{k_{2}}, \rho_{k_{2}}'}[\tilde{F}_{k_{2}}]+e^{-cL_{k_{2}}} \leq 2e^{-cL_{k_{2}}}.
\end{equation}
This concludes the proof with $R=R_{k_{2}}=\frac{\ell_{k_{2}}}{2} L_{k_{2}}$ and the suitable choice of~$\epsilon_{0}$.
\end{proof}

\subsection{Proof of Theorem~\ref{teo:positive_velocity}}
~
\par We now turn to the proof that $r_{t}$ travels to the right with positive velocity. Our first goal is to obtain bounds for the sequence of times $L_{k}$. The renormalisation developed in Subsection~\ref{subsec:renorm} will be used in this step, since it says that, considering the process stopped at any of these times, the wave front contains at least 2 particles during a positive proportion of time. Once this is done, we use a concatenation argument similar to the one used in the proof of Theorem~\ref{teo:finite_velocity} to conclude.

%%%%%
\newconstant{c:concentration_2}
%%%%%

\par We begin by introducing the zero-mean martingale
\begin{equation}\label{eq:martingale}
M_{t}=r_{t}-r_{0}-\int_{0}^{t}\frac{1}{2}g(\eta_{s}(r_{s})) \charf{\{\eta_{s}(r_{s}) \geq 2\}} \,\mathrm{d}s,
\end{equation}
and stating a concentration estimate for it.
\begin{prop}\label{prop:martingale_large_deviation}
For every $\delta>0$, there exists a positive constant $\useconstant{c:concentration_2}$ that depends also on $\rho>0$ such that, for all $k$,
\begin{equation}
\PP_{\rho}[|M_{L_{k}}| \geq \delta L_{k}] \leq \useconstant{c:concentration_2}e^{-\useconstant{c:concentration_2}^{-1}L_{k}^{\sfrac{1}{8}}}.
\end{equation}
\end{prop}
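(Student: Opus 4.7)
The plan is to recognise $M_{t}$ as a pure-jump local martingale with unit jump size, and then apply the standard exponential-supermartingale machinery after controlling its predictable quadratic variation on a suitable good event built from Theorem~\ref{teo:finite_velocity}, Lemma~\ref{lemma:infection_3} and Lemma~\ref{lemma:many_particles}.

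First I would unpack the predictable structure of $M_{t}$. The front $r_{t}$ performs only two kinds of jumps: an upward jump by $+1$ at rate $u_{s}:=g(\eta_{s}(r_{s}))/2$ (the top particle at $r_{s}$ moves to the right), and a downward jump by $-1$ at rate $d_{s}:=(g(1)/2)\,\mathbf{1}_{\{\eta_{s}(r_{s})=1\}}$ (requiring the single particle at $r_{s}$ to jump left). The drift of $r_{t}$ is therefore $u_{s}-d_{s}=(g(\eta_{s}(r_{s}))/2)\,\mathbf{1}_{\{\eta_{s}(r_{s})\geq 2\}}$, which is exactly the compensator subtracted in~\eqref{eq:martingale}; hence $M_{t}$ is a local martingale, all its jumps have magnitude $1$, and its predictable quadratic variation satisfies
\[
\langle M\rangle_{t}=\int_{0}^{t}(u_{s}+d_{s})\,ds\leq \Gamma_{+}\int_{0}^{t}\eta_{s}(r_{s})\,ds,
\]
where we used $g(n)\leq \Gamma_{+}n$ from~\eqref{eq:g_condition}. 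A standard computation based on the bounds $e^{\pm\lambda}-1\mp\lambda\leq C_{0}\lambda^{2}$ valid for $|\lambda|\leq 1$ then shows that for every such $\lambda$ the process
\[
Z^{\pm}_{t}(\lambda):=\exp\!\Bigl(\pm\lambda M_{t}-C_{0}\lambda^{2}\Gamma_{+}\int_{0}^{t}\eta_{s}(r_{s})\,ds\Bigr)
\]
is a supermartingale, which is the ingredient needed to apply Markov's inequality to $M_{L_{k}}$.

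My next step would be to build a high-probability event $G$ on which $\int_{0}^{L_{k}}\eta_{s}(r_{s})\,ds\leq V$ for a suitable $V$. Theorem~\ref{teo:finite_velocity} combined with Lemma~\ref{lemma:infection_3} localises the full trajectory $(r_{s})_{s\leq L_{k}}$ inside a spatial box $[-CL_{k},CL_{k}]$ with probability at least $1-Ce^{-c\log^{\sfrac{5}{4}}L_{k}}$, while Lemma~\ref{lemma:many_particles}, applied at each site of this box with parameter $u\asymp L_{k}^{\sfrac{1}{8}}$ and combined with a union bound, provides a pointwise bound on the occupation variables throughout the box with probability at least $1-\widetilde{C}e^{-\widetilde{c}L_{k}^{\sfrac{1}{8}}}$. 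On this event $G$, Markov's inequality applied to $Z^{\pm}_{L_{k}}(\lambda)$ followed by optimisation over $\lambda\in(0,1]$ yields
\[
\PP_{\rho}\bigl(\pm M_{L_{k}}\geq \delta L_{k},\,G\bigr)\leq \exp\!\Bigl(-\tfrac{\delta^{2}L_{k}^{2}}{4C_{0}\Gamma_{+}V}\Bigr),
\]
and together with $\PP_{\rho}(G^{c})\leq Ce^{-cL_{k}^{\sfrac{1}{8}}}$ this produces the claimed estimate whenever one can take $V=O(L_{k}^{\sfrac{15}{8}})$.

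The hard part will be achieving such a small $V$. Bounding $\eta_{s}(r_{s})$ naively by $\max_{s,x}\eta_{s}(x)$ and plugging in Lemma~\ref{lemma:many_particles} alone only gives $V=O(L_{k}^{2})$, which is a factor $L_{k}^{\sfrac{1}{8}}$ too large to yield the required exponent. To close this gap one must exploit that $r_{s}$ traces a \emph{single} space-time path rather than filling a whole space-time box: every time $r_{s}$ moves it has just been invaded by one extra particle coming from the adjacent pile, so conditionally on that particle the law of $\eta_{s}(r_{s})$ is dominated by $1$ plus an independent sample from $\mu_{\rho}$, which has exponential tails by Proposition~\ref{prop:concentration}. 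Iterating this one-step control along the (at most polynomially many in $L_{k}$) successive jumps of $r_{t}$ should yield the refined estimate $\int_{0}^{L_{k}}\eta_{s}(r_{s})\,ds=O(L_{k})$ on an event of probability at least $1-Ce^{-cL_{k}^{\sfrac{1}{8}}}$, and with this value of $V$ the exponential-martingale bound above comfortably closes the argument.
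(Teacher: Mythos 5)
Your identification of the jump structure of $r_{t}$ and of the compensator is correct, and the resulting exponential-supermartingale strategy is a genuinely different route from the paper's, which discretises time, truncates the increments $X_{n}=M_{n}-M_{n-1}$ at level $L_{k}^{\sfrac14}$, applies Azuma to the bounded part $Y_{n}$, and Chebyshev (via a second-moment bound coming from a per-step tail estimate, Lemma~\ref{lemma:tail_bound}) to the heavy part $Z_{n}$. Your route would, if completed, avoid the truncation bookkeeping entirely by pushing all the difficulty into a single estimate on $\langle M\rangle_{L_{k}}\leq\Gamma_{+}\int_{0}^{L_{k}}\eta_{s}(r_{s})\,ds$.

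The problem is that your argument for that single estimate has a real gap. The claim that ``conditionally on that particle the law of $\eta_{s}(r_{s})$ is dominated by $1$ plus an independent sample from $\mu_{\rho}$'' is exactly a statement about the environment seen from the front, and there is no justification offered (nor is one easy: conditioning on $r_{s}$ being the front biases the occupation at $r_{s}$, and these conditional laws are not under control here). Moreover, the ``iterating this one-step control along the (at most polynomially many) successive jumps of $r_{t}$'' step is circular: the number of jumps of $M$ up to time $L_{k}$ \emph{is} (up to the rate constant) the quadratic variation you are trying to bound, so you cannot take it as a priori polynomial in $L_{k}$ without a separate argument. Since you yourself flag that without this refinement the bound $V=O(L_{k}^{2})$ is insufficient for the exponent $L_{k}^{\sfrac18}$, the proof as written does not close.

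There is a simpler fix that you have not noticed, and it is essentially what the paper's Lemma~\ref{lemma:tail_bound} does implicitly: instead of applying Lemma~\ref{lemma:many_particles} once on the whole interval $[0,L_{k}]$ (which produces $A(u,L_{k})\asymp L_{k}$ because of the $4\Gamma_{+}t$ term), apply it on each unit interval $[n,n+1]$ with $t=1$ and $u\asymp L_{k}^{\sfrac14}$, together with the speed estimates from Lemmas~\ref{lemma:infection_1} and~\ref{lemma:infection_3} to localise which sites the front can visit in each unit slot. A union bound over $O(L_{k})$ unit slots and $O(L_{k})$ sites then gives $\eta_{s}(r_{s})\leq cL_{k}^{\sfrac14}$ for all $s\in[0,L_{k}]$ outside an event of probability $O(L_{k}^{2}e^{-cL_{k}^{\sfrac14}})$, whence $V=O(L_{k}^{\sfrac54})$. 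Feeding that into your Chernoff-type bound gives $\exp(-c\delta^{2}L_{k}^{\sfrac34})$, which comfortably dominates the required $e^{-cL_{k}^{\sfrac18}}$. With this repair (and making the use of the event $G$ rigorous via the stopping time $\tau=\inf\{t:\int_{0}^{t}\eta_{s}(r_{s})\,ds>V\}$ and optional stopping of the exponential supermartingale), your approach would work and would even yield a slightly better exponent than the paper's $\sfrac18$; as it stands, however, the key estimate is not proven.
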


\par We postpone the proof of this proposition to the Appendix. With it, we can study the behavior of $r_{t}$ at the times $L_{k}$. Since we know that $M_{L_{k}}$ is concentrated around its mean, in order to verify that $r_{L_{k}}$ drifts to the right it suffices to study the integral term in~\eqref{eq:martingale}.

\begin{prop}\label{prop:drift}
There exists $k_{3} \geq k_{2}$ and $\delta>0$ such that, for all $k \geq k_{3}$,
\begin{equation}
\PP_{\rho_{\infty}}[r_{L_{k}} \leq \delta L_{k}, \text{ and } r_{0}=0] \leq 4e^{-\log^{\sfrac{5}{4}}L_{k}}. 
\end{equation}
\end{prop}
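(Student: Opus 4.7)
The starting point is the martingale decomposition \eqref{eq:martingale},
\[
r_{L_k}-r_0 \;=\; M_{L_k} + \int_0^{L_k}\tfrac{1}{2}\,g(\eta_s(r_s))\,\charf{\{\eta_s(r_s)\ge 2\}}\,\mathrm{d}s.
\]
The lower bound in~\eqref{eq:g_condition} gives $g(n)\ge n\Gamma_-$, so the integrand dominates $\Gamma_-\charf{\{\eta_s(r_s)\ge 2\}}$, and the martingale is handled by Proposition~\ref{prop:martingale_large_deviation}. Hence the task reduces to showing that, outside an event of probability at most $3e^{-\log^{\sfrac{5}{4}}L_k}$, the Lebesgue measure of $\mathcal{A}:=\{s\in[0,L_k]:\eta_s(r_s)\ge 2\}$ is at least $cL_k$ for some $c=c(\rho_\infty)>0$.

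\paragraph{Nearby pairs via renormalization.} Conditional on $r_0=0$, Lemmas~\ref{lemma:infection_1} and~\ref{lemma:infection_3} imply that, outside an event of probability $\leq e^{-\log^{\sfrac{5}{4}}L_k}$, the front $r_t$ stays inside $I_k$ throughout $[0,L_k]$, so $r|_{[0,L_k]}$ is a nearest-neighbor $\eta$-allowed path for scale $k$. Combining this with Lemmas~\ref{lemma:renorm_2.3} and~\ref{lemma:renorm_2.4}, for the radius $R$ supplied by Lemma~\ref{lemma:renorm_2.4} and any $k\ge k_3$ sufficiently large, one has $q_k\le e^{-\log^{\sfrac{5}{4}}L_k}$. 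Transferring this coupled bound to a single-process bound under $\PP_{\rho_\infty}$ — via the monotone coupling together with the monotonicity of $F_k^R$ in $\eta$ and $\eta'$, and if necessary one extra application of Proposition~\ref{prop:simultaneous_decoupling} to absorb the sprinkling mismatch between $(\rho_k,\rho_k')$ and $(\rho_\infty,\rho_\infty')$ — yields that, outside a further event of probability $\le e^{-\log^{\sfrac{5}{4}}L_k}$, the front itself satisfies $V_\eta^{R,L_k}(r)\ge \epsilon_k\ge \tfrac12\epsilon_\infty$. Consequently there is a set $\Sigma\subset[0,L_k]$ of Lebesgue measure at least $\tfrac12\epsilon_\infty L_k$ on which $[r_s-R,r_s+R]$ contains at least two particles.

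\paragraph{From nearby pairs to coincident pairs (main obstacle).} The crux is to upgrade the ``nearby'' statement to a ``coincident'' one. The plan is to partition $[0,L_k]$ into blocks $[jT,(j+1)T]$ of length $T:=CR^2$ for a large constant $C$, and to call a block \emph{good} when $jT\in\Sigma$; then at least $\epsilon_\infty L_k/(4T)$ blocks are good. For each good block I condition on $\mathcal{F}_{jT}$ and establish a meeting estimate: two infected particles within distance $R$ of $r_{jT}$ both reach the front inside $[jT,(j+1)T]$ and keep it doubly occupied for a Lebesgue amount $c_1>0$, with conditional probability bounded below by a constant $c_0=c_0(R,\Gamma_\pm,\rho_\infty)>0$. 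The estimate relies on stochastic domination of individual particle motion by symmetric rate-$\Gamma_-$ random walks (from~\eqref{eq:g_condition}) and on the block-scale control of the displacement of $r$ provided by Lemma~\ref{lemma:infection_2}. Because distinct blocks consume disjoint portions of the graphical construction, the successes can be made independent, and Hoeffding's inequality gives that at least half of the good blocks are successful outside an event of probability $\le e^{-cL_k/T}\le e^{-\log^{\sfrac{5}{4}}L_k}$.

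\paragraph{Conclusion.} Combining the four bad events — front leaves $I_k$, $V_\eta^{R,L_k}(r)<\tfrac12\epsilon_\infty$, too few good blocks successful, and $|M_{L_k}|>\delta L_k$ (controlled by Proposition~\ref{prop:martingale_large_deviation}, whose tail $\useconstant{c:concentration_2}e^{-\useconstant{c:concentration_2}^{-1}L_k^{\sfrac{1}{8}}}$ is dominated by $e^{-\log^{\sfrac{5}{4}}L_k}$ once $k$ is large) — and choosing $\delta<\tfrac12\,\Gamma_- c_1 \epsilon_\infty/(8T)$, one obtains $r_{L_k}\ge \delta L_k$ on the complement. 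A union bound then produces the claimed $4 e^{-\log^{\sfrac{5}{4}}L_k}$. The main difficulty lies in the meeting estimate: quantifying ``nearby $\Rightarrow$ coincident'' with constants uniform in $k$ in the highly correlated zero range dynamics requires both the lower rate bound $\Gamma_-$ from~\eqref{eq:g_condition} and care with the graphical construction to preserve block independence.
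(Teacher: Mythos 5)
Your proposal follows the same skeleton as the paper's proof: use the renormalized quantities $q_{k}$ (Lemmas~\ref{lemma:renorm_2.3}--\ref{lemma:renorm_2.4}) together with Lemmas~\ref{lemma:infection_1}, \ref{lemma:infection_3} to show $V_{\eta}^{R,L_{k}}(r)$ is bounded below by a positive constant with high probability, upgrade ``nearby pairs within distance $R$'' to ``coincident pairs at the front'' by a conditional meeting estimate plus concentration, and close the argument with the martingale decomposition~\eqref{eq:martingale} and Proposition~\ref{prop:martingale_large_deviation}. The only real divergence is in the middle step, and it contains a small but genuine gap.

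You partition $[0,L_{k}]$ into blocks $[jT,(j+1)T]$ and declare block $j$ \emph{good} precisely when $jT\in\Sigma$, then assert that at least $\epsilon_{\infty}L_{k}/(4T)$ blocks are good. This does not follow from $|\Sigma|\geq\tfrac{1}{2}\epsilon_{\infty}L_{k}$: a set of large Lebesgue measure can avoid every point of $T\ZZ$, so the number of good blocks under your criterion can be zero. The paper sidesteps this by selecting a \emph{random} sequence of times greedily from $\Sigma$: on $\{V_{\eta}^{R,L_{k}}(r)\geq\epsilon_{\infty}\}$ it picks $t_{1}<\cdots<t_{N}$, $N=\lfloor\epsilon_{\infty}L_{k}/2\rfloor$, with $t_{i}\in\Sigma$ and $t_{i+1}-t_{i}\geq 2$, and runs the meeting estimate on $[t_{i},t_{i}+1]$ conditionally on $(\eta_{t_{i}},r_{t_{i}})$. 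The fix for your version is cosmetic --- call block $j$ good if $\Sigma\cap[jT,(j+1)T-1]\neq\emptyset$, which does give at least $|\Sigma|/T$ good blocks, and pick some $s_{j}$ in that intersection --- but it is needed. A second, smaller issue: the phrase ``the successes can be made independent'' because distinct blocks use disjoint parts of the graphical construction is not quite right, since which blocks are good (and where $s_{j}$ lies within them) is only determined by the full trajectory on $[0,L_{k}]$; the correct formulation, and the one the paper uses, is a uniform conditional lower bound $\PP[\text{success}\mid\mathcal{F}_{s_{j}}]\geq c_{0}$ giving stochastic domination of the success count by an i.i.d.\ Bernoulli$(c_{0})$ sum. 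Once these two points are tightened, your argument reproduces the paper's proof, just with time windows of length $T=CR^{2}$ in place of the paper's unit windows, and a constructive meeting estimate in place of the paper's contradiction argument for~\eqref{eq:density_event}.
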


\par The idea of the proof is to use that, with high probability, the path $r_{t}$ is $\eta$-allowed. Therefore, for a positive fraction of times, there are more than two particles close to it. Using this fact, we will prove that there is a positive fraction of times for which two particles are on top of the front, producing a drift to the right.

\begin{proof}
Begin by introducing the event
\begin{equation}
\bar{G}_{k}=\left\{ \sup _{0 \leq t \leq L_{k}}|r_{t}| \geq \frac{\ell_{k}}{4}L_{k}^{2}\right\},
\end{equation}
and notice that, by Lemmas~\ref{lemma:infection_1} and~\ref{lemma:infection_3},
\begin{equation}
\PP_{\rho_{\infty}}\left[\begin{array}{cl}
(r_{t})_{0 \leq t \leq L_{k}} \text{ is not } \eta \text{-allowed for} \\ \text{ the scale } k \text{ and } r_{0}=0
\end{array}\right] \leq \PP_{\rho_{\infty}}[\bar{G}_{k}, r_{0}=0] \leq ce^{-c^{-1}L_{k}},
\end{equation}
for some positive constant $c$.

By possibly increasing the value of $k_{3}$, we obtain, for $k \geq k_{3}$,
\begin{equation}\label{eq:part_close_to_r}
\PP_{\rho_{\infty}}[V^{R,L_{k}}_{\eta}(r_{t}) \leq \epsilon_{\infty}, r_{0}=0] \leq q_{k}+\PP_{\rho_{\infty}}[\bar{G}_{k}, r_{0}=0] \leq 2e^{-\log^{\sfrac{5}{4}}L_{k}}.
\end{equation}
In the equation above and in what follows, we use a slight abuse of notation by denoting $r_{t}$ as the whole path $(r_{t})_{t \geq 0}$ or its restriction when convenient.

We now claim that, if, for some time $t \in [0, L_{k}]$, we have $\sum_{x = r_{t}-R}^{r_{t}+R}\eta_{t}(x) \geq 2$, then there exists a positive probability that
\begin{equation}\label{eq:density_event}
\left| \left\{s \in [t, t+1]: \eta_{s}(r_{s}) \geq 2\right\}\right| \geq \delta',
\end{equation}
for some $\delta'>0$, where $|A|$ denotes the Lebesgue measure of $A$.

One way to verify this claim is by contradiction. If the probability of the event above is zero for every $\delta'>0$, then, by taking the limit as $\delta' \to 0$, the probability that the front of the infection has at least two particles between times $t$ and $t+1$ is zero. This, however, contradicts the fact that, at time $t$, there is at least one particle at distance at most $R$ from the front and that this particle has a positive chance of reaching the front before time $t+1$.

\begin{comment}
One way to see why this is true is the following. Consider initially the stopping time $\tau=\inf\{ s \geq t: \eta_{s}(r_{s}) \geq 2\}$. Since, at time $t$, there exists at least one more particle that is within distance $R$ from $r_{t}$, we have that $\PP_{\rho_{\infty}}[\tau < t+\sfrac{1}{2}]>0$ uniformly. When this happens, there is also a positive probability that no particle in $r_{\tau}$ moves between times $r_{\tau}$ and $r_{\tau+\sfrac{1}{2}}$: it suffices that no particle in a large interval around $r_{\tau}$ moves and no particle from outside this interval reaches $r_{\tau}$ before time $\tau+\sfrac{1}{2}$.
\end{comment}

This implies that, conditioned on $(\eta_{t}, r_{t})$, the indicator function of the event in~\eqref{eq:density_event} stochastically dominates a random variable $X$ with positive expectation and that assumes only the values zero and one. Define $\delta=\frac{\epsilon_{\infty}}{4}\EE_{\rho_{\infty}}[X]$.

We now investigate the event $\{V^{R, L_{k}}_{\eta}(r_{t}) \geq \epsilon_{\infty}\}$. In it, there exists a sequence of times $(t_{i})_{i \in [N]}$, $N =\lfloor \frac{\epsilon_{\infty}}{2}L_{k} \rfloor$, such that $|t_{i}-t_{j}| \geq 2$, for $i \neq j$, and $\sum_{x = r_{t_{i}}-R}^{r_{t_{i}}+R}\eta_{t_{i}}(x) \geq 2$, for all $i \in [N]$. These times allow us to estimate
\begin{equation}
\begin{split}
\PP_{\rho_{\infty}}[V^{0, L_{k}}_{\eta}(r_{t}) \leq \delta\delta', r_{0}=0] & \leq \PP_{\rho_{\infty}}\left[X_{1}+X_{2} + \cdots + X_{N} \leq \delta L_{k}\right] \\
& \qquad + \PP_{\rho_{\infty}}[V^{R, L_{k}}_{\eta}(r_{t}) \leq \epsilon_{\infty}, r_{0}=0],
\end{split}
\end{equation}
where $(X_{i})_{i \in [N]}$ are iid copies of $X$.

Recall that $\delta=\frac{\epsilon_{\infty}}{4}\EE_{\rho_{\infty}}[X]$ and $N =\lfloor \frac{\epsilon_{\infty}}{2}L_{k} \rfloor$ and write $\bar{\delta}=\delta\delta'$. Standard concentration bounds for $(X_{i})_{i \in [N]}$ and Equation~\eqref{eq:part_close_to_r} imply
\begin{equation}
\PP_{\rho_{\infty}}[V^{0, L_{k}}_{\eta}(r_{t}) \leq \bar{\delta}, r_{0}=0] \leq 3e^{-\log^{\sfrac{5}{4}}L_{k}}.
\end{equation}

Notice that, if $V^{0, L_{k}}_{\eta}(r_{t}) \geq \bar{\delta}$, then
\begin{equation}
\int_{0}^{L_{k}}\frac{1}{2}g(\eta_{s}(r_{s})) \charf{\{\eta_{s}(r_{s}) \geq 2\}} \,\mathrm{d}s \geq \frac{\bar{\delta}}{2}g(2)L_{k}.
\end{equation}
Recall\eqref{eq:martingale}, set $\bar{\delta}'=\frac{\bar{\delta}}{2}g(2)$ and use Proposition~\ref{prop:martingale_large_deviation} to conclude that
\begin{equation}
\begin{split}
\PP_{\rho_{\infty}}[r_{L_{k}} \leq \bar{\delta}'L_{k}, r_{0}=0] & \leq \PP_{\rho_{\infty}}[V^{0, L_{k}}_{\eta}(r_{t}) \leq \bar{\delta}, r_{0}=0] + \PP_{\rho_{\infty}}[|M_{L_{k}}| \geq \bar{\delta}'L_{k}] \\ 
& \leq 4e^{-\log^{\sfrac{5}{4}}L_{k}}.
\end{split}
\end{equation}
\end{proof}

We are now ready to conclude the proof of Theorem~\ref{teo:positive_velocity}. The last step is a concatenation argument similar to the one used in the last section to conclude Theorem~\ref{teo:finite_velocity}. For this reason, we provide just a sketch of the proof.

\begin{proof}[Proof of Theorem~\ref{teo:positive_velocity}]
Let $k_{3}$ be as in Proposition~\ref{prop:drift} and notice that we may assume $L \geq 2L_{k_{3}+2}$. Choose $\bar{k} \geq k_{3}$ such that
\begin{equation}
2L_{\bar{k}+2} \leq L < 2L_{\bar{k}+3}.
\end{equation}

For $m =(x,s) \in \ZZ \times L_{k}\NN_{0}$, define the events
\begin{equation}
\bar{E}_{k}(m)=\{r_{L_{k}}(m)-x \leq \delta L_{k} \text{ and } r_{0}(m)=x\},
\end{equation}
where $\delta$ is given by Proposition~\ref{prop:drift}. Consider also
\begin{equation}
\bar{H}_{k}(m)=\left\{x-\inf_{0 \leq t \leq L_{k}}r_{t}(m) \leq 2(\Gamma_{+}+1)L_{k}\right\}.
\end{equation}
Finally, define
\begin{equation}
A = \left\{\begin{array}{cl}
r_{t} \geq v_{+}t+L, \\
\text{for some } t \geq 0
\end{array}\right\},
\end{equation}
where $v_{+}$ is given by Theorem~\ref{teo:finite_velocity} and is such that~\eqref{eq:finite_speed} holds.

%%%%%
\newconstant{c:bad_event_bound_2}
%%%%%

Define the set of indices
\begin{equation}
\bar{M}_{k}=\{m \in \ZZ \times L_{k} \NN_{0}: B_{k}(m) \cap B_{k+2} \neq \emptyset\},
\end{equation}
and consider the event
\begin{equation}
\bar{B}_{\bar{k}}=A^{c} \cap \bigcap_{k \geq \bar{k}}\bigcap_{m \in \bar{M}_{k}} \bar{E}_{k}(m)^{c} \cap \bar{H}_{k}(m).
\end{equation}
Proposition~\ref{prop:drift}, Lemma~\ref{lemma:infection_3} and Theorem~\ref{teo:finite_velocity} imply, by possibly changing constants, that
\begin{equation}
\PP_{\rho_{\infty}}[\bar{B}_{\bar{k}}^{c}] \leq \useconstant{c:bad_event_bound_2}e^{-\useconstant{c:bad_event_bound_2}^{-1}\log^{\sfrac{5}{4}}L}.
\end{equation}

Similarly to the proof of Theorem~\ref{teo:finite_velocity}, define
\begin{equation}
\bar{J}_{\bar{k}}=\bigcup_{k \geq \bar{k}}\bigcup_{\ell=0}^{\sfrac{L_{k+2}}{L_{k}}}\{\ell L_{k}\}.
\end{equation}
On the event $\bar{B}_{\bar{k}} \cap \{r_{0}=0\}$, induction implies that
\begin{equation}
r_{t} \geq \delta t, \qquad \text{for all } t \in J_{\bar{k}}.
\end{equation}

We now interpolate for the remaining values of $t$. Consider initially $t \geq L$. Let $\kappa$ be the smallest $k \geq \bar{k}$ such that
\begin{equation}
\ell L_{\kappa} \leq t < (\ell+1)L_{\kappa}, \qquad \text{for some } \ell < \frac{L_{\kappa+2}}{L_{\kappa}}.
\end{equation}
Let $\bar{\ell}$ denote the unique value of $\ell$ and observe that $\ell \geq L_{\kappa}/L_{\kappa-1}$. This easily implies, by increasing the value of $L$ if necessary,
\begin{equation}
r_{t} \geq \delta \ell L_{\kappa}-(2\Gamma_{+}+1)L_{\kappa} \geq \frac{\delta}{2}t. 
\end{equation}

The interpolation for the values $t \leq L$ is done in the same way as in the proof of Theorem~\ref{teo:finite_velocity} and we omit it here.
\end{proof}

\par To conclude, we present the proof of Proposition~\ref{prop:horizontal_decoupling}.

\begin{proof}[Proof of Proposition~\ref{prop:horizontal_decoupling}.]
Observe first that we may increase the side-length of both boxes by at most $\dist_{V}+s$ and assume the boxes have the form
\begin{align*}
B_{1}=[-s,0]\times[0, s], \\
B_{2}=[\dist_{H},\dist_{H}+s]\times[0,s].
\end{align*}

Figure~\ref{fig:boxes_horizontal} can be used as a reference.

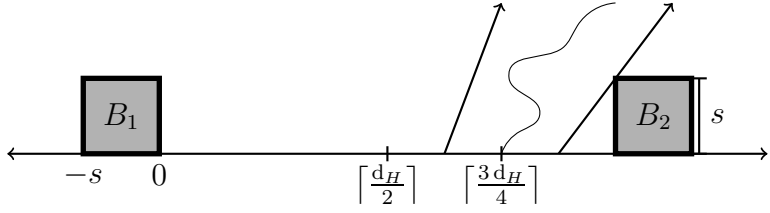
\begin{figure}[h]\label{fig:boxes_horizontal}
\begin{center}
\begin{tikzpicture}

% Grids
\draw[<->, thick]  (-3,0) -- (7,0);

% Boxes
\fill [black!30!](-2,1) rectangle (-1,0);
\draw [line width=2pt] (-2,1) rectangle (-1,0);
\node at (-1.5,0.5) {$B_{1}$};

\fill [black!30!](5,0) rectangle (6,1);
\draw [line width=2pt] (5,0) rectangle (6,1);
\node at (5.5,0.5) {$B_{2}$};

\draw[thick] (6.1,0)--(6.1,1);
\draw[thick] (6,1)--(6.2,1);
\draw[thick] (6,0)--(6.2,0);
\node[right] at (6.1,0.5) {$s$};

% Space lines
\node[below] at (-2,0) {$-s$};
\node[below] at (-1,0) {$0$};
\draw[thick] (2,-0.1)--(2,0.1);
\node[below] at (2,0){$ \left\lceil \frac{\dist_{H}}{2} \right\rceil$};

% Infection
\draw[thick] (3.5,-0.1)--(3.5,0.1);
\node[below] at (3.5,0){$ \left\lceil \frac{3\dist_{H}}{4} \right\rceil$};
\draw[->,thick] (4.25,0)--(5.75,2);
\draw[->, thick](2.75,0)--(3.5,2);
\draw (3.5,0) to [curve through={(3.75,0.3) .. (4, 0.5) .. (3.6, 1) .. (4.3, 1.4) .. (4.5, 1.7)}] (5,2);

\end{tikzpicture}
\caption{The boxes $B_{1}$ and $B_{2}$. Notice also the infection process $r_{t}(\lceil\sfrac{3d_{H}}{4}\rceil,0)$ and the lines that bound the evolution of the front.}
\end{center}
\end{figure}

We now verify that, with high probability, the outcomes of $f_{1}$ and $f_{2}$ are determined by disjoint parts of the graphical construction in the space-time.

Consider initially $r_{t}(\lceil\sfrac{3d_{H}}{4}\rceil,0)$. Observe that, if $f_{2}$ is not determined by the graphical construction restricted to $(\left\lceil\sfrac{\dist_{H}}{2} \right\rceil, \infty) \times [0,s]$ (and the initial configuration restricted to $(\left\lceil\sfrac{\dist_{H}}{2} \right\rceil, \infty)$), then the infection $r_{t}(\lceil\sfrac{3d_{H}}{4}\rceil,0)$ either touches $B_{2}$ or the line $y=\left\lceil\sfrac{\dist_{H}}{2} \right\rceil$. On the other hand, if we consider the reflected infection $\tilde{r}_{t}(\lceil\sfrac{d_{H}}{4}\rceil,0)$, that starts with the right half-axis infected and travels to the left, we obtain a similar statement for $B_{1}$. More precisely, the outcome of $f_{1}$ is not determined by the graphical construction restricted to $(-\infty, \left\lceil\sfrac{\dist_{H}}{2} \right\rceil) \times [0,s]$ if, and only if, the reflected infection reaches $B_{1}$ or it reaches the line  $y=\left\lceil\sfrac{\dist_{H}}{2} \right\rceil$. Besides, the graphical construction is independent in disjoint subsets of the space-time.

Let $A$ be the event where $r_{t}(\lceil\sfrac{3d_{H}}{4}\rceil,0)$ either touches $B_{2}$ or the line $y=\left\lceil\sfrac{\dist_{H}}{2} \right\rceil$,  and denote by $\tilde{A}$ the respective event with the infection $\tilde{r}_{t}(\lceil\sfrac{d_{H}}{4}\rceil,0)$ and the box $B_{1}$. If we choose $\usebigconstant{c:horizontal_decoupling_1}$ and $\usebigconstant{c:horizontal_decoupling_2}$ large enough, we can use Theorems~\ref{teo:finite_velocity} and~\ref{teo:vertical_decoupling} to bound
\begin{equation}
\PP_{\rho}[A] \leq ce^{-c^{-1}\log^{\sfrac{5}{4}} d_{H}}.
\end{equation}
By symmetry, the same is true for $\PP_{\rho}[\tilde{A}]$. We now can bound
\begin{equation}
\begin{split}
\EE_{\rho}[f_{1}f_{2}] & \leq \EE_{\rho}[f_{1}f_{2}\charf{A^{c} \cap \tilde{A}^{c}}]+\PP_{\rho}[A]+\PP_{\rho}[\tilde{A}]\\
& \leq \EE_{\rho}[f_{1}]\EE_{\rho}[f_{2}] + 2(\PP_{\rho}[A]+\PP_{\rho}[\tilde{A}]) \\
& \leq \EE_{\rho}[f_{1}]\EE_{\rho}[f_{2}] + ce^{-c^{-1}\log^{\sfrac{5}{4}} d_{H}}
\end{split}.
\end{equation}
The proof is complete.
\end{proof}

\appendix

\section{Proof of Proposition~\ref{prop:concentration}}\label{apdx:concentration}
~
\par This section contains the proof of Proposition~\ref{prop:concentration}. We present only the proof of the first statement, since the second one is obtained in the same way.

\par Begin by observing that
\begin{displaymath}
\EE_{\rho}[e^{\lambda X_{1}}]=\frac{Z(e^{\lambda}R^{-1}(\rho))}{Z(R^{-1}(\rho))}.
\end{displaymath}
By independence, for $\lambda>0$ we have
\begin{align*}
\PP_{\rho}\left[\sum_{k=1}^{n}X_{k} \geq (\rho+\epsilon)n\right] & = \PP_{\rho}\left[\exp\left\{\lambda \sum_{k=1}^{n}X_{k}\right\} \geq e^{\lambda(\rho+\epsilon)n}\right] \\
& \leq \left[\EE_{\rho}[e^{\lambda X_{1}}]e^{-\lambda(\rho+\epsilon)}\right]^{n} \\
& \leq \left[\frac{Z(e^{\lambda}R^{-1}(\rho))}{Z(R^{-1}(\rho))}e^{-\lambda(\rho+\epsilon)}\right]^{n}.
\end{align*}

\par We now split the last term above and work with the function
\begin{equation*}
f(\lambda)=\frac{Z(e^{\lambda}R^{-1}(\rho))}{Z(R^{-1}(\rho))}e^{-\lambda(\rho+\frac{\epsilon}{2})}.
\end{equation*}
Observe that $f(0)=1$ and that
\begin{equation*}
f'(\lambda)=e^{-\lambda(\rho+\frac{\epsilon}{2})}\frac{Z(e^{\lambda}R^{-1}(\rho))}{Z(R^{-1}(\rho))}\left[R(e^{\lambda}R^{-1}(\rho))-\rho-\frac{\epsilon}{2}\right].
\end{equation*}

\par The function $R$ is increasing. Besides, $R'$ is continuous, hence $R$ is Lipschitz continuous on the interval $[0,e\rho_{+}]$ and hence, for $ \lambda \leq 1$,
\begin{align*}
R(e^{\lambda}R^{-1}(\rho))-\rho & = R(e^{\lambda}R^{-1}(\rho))-R(R^{-1}(\rho)) \\
& \leq \tilde{c}(\rho_{+})R^{-1}(\rho_{+})(e^{\lambda}-1) < \frac{\epsilon}{2},
\end{align*}
for all $\lambda < \lambda_{*}(\epsilon) := \min\left\{\log\left(1+\frac{\epsilon}{2\tilde{c}(\rho_{+})R^{-1}(\rho_{+})}\right), 1\right\}$. For such values of $\lambda$, $f'(\lambda)<0$ and hence $f(\lambda) \leq 1$. Now, we just need to choose $c(\rho_{+})$ such that $2c(\rho_{+})\epsilon < \lambda_{*}(\epsilon)$ for all $\epsilon \leq 1$. This implies that we can bound
\begin{equation}\label{eq:final_upper_bound}
\PP_{\rho}\left[\sum_{k=1}^{n}X_{k} \geq (\rho+\epsilon)n\right] \leq e^{-c(\rho_{+})\epsilon^{2}n},
\end{equation}
completing the proof.

\section{Proof of Claim~\ref{claim:coupling}}\label{apdx:claim}
~
\par In this section we conclude the proof of Claim~\ref{claim:coupling}, that states that $(\eta_{s})_{s \geq 0}$ defined in Subsection~\ref{subsec:coupling} is indeed a zero range process.

\par We already observed that this is the case when $\eta_{0}$ has finitely many particles. We now treat the case when the number of particles of $\eta_{0}$ is infinite.

\par To verify that $(\eta_{s})_{s \geq 0}$ is a zero range process in this case, if suffices to check that the semigroup $(T_{s})_{s \geq 0}$ associated with $(\eta_{s})_{s \geq 0}$ coincides with the semigroup $(S_{s})_{s \geq 0}$ of a zero range process with rate function $g$. Fix then a local bounded continuous function $f: \NN_{0}^{[-n,n]} \to \RR$. We need to verify that
\begin{equation}
T_{t}f(\eta_{0})=S_{t}f(\eta_{0}) \quad \mu_{\rho}\text{-almost surely}.
\end{equation}

\par Let $(\eta_{s}^{m})_{s \geq 0}$ be the process with initial configuration
\begin{equation}
\eta_{0}^{m}(x)=\begin{cases}
\eta_{0}(x), & \quad \text{if } |x| \leq n+m, \\
0, & \quad \text{otherwise}.
\end{cases}
\end{equation}

\par We have
\begin{equation}
\begin{split}
S_{t}f(\eta_{0})& =\lim_{m \to \infty} S_{t}f(\eta_{0}^{m}) \\
& = \lim_{m \to \infty} T_{t}f(\eta_{0}^{m})
\end{split}.
\end{equation}
To conclude, we need to verify that
\begin{equation}
\PP_{\rho}\left[\eta_{t}^{m}(x) \neq \eta_{t}(x), \text{ for some } x \in [-n,n]\right] \to 0,
\end{equation}
as $m \to \infty$.

\par In the event above, there exists a particle that is outside $[-m-n,n+m]$ at time zero and reaches $[-n,n]$ before time $t$. Since particles move as random walks up to time changes, Lemma~\ref{lemma:many_particles} still applies. This implies that, if $X \sim \poisson(\Gamma_{+}tA(\sqrt{m},t))$, then
\begin{equation}
\begin{split}
\PP_{\rho}\left[\eta_{t}^{m}(x) \neq \eta_{t}(x), \text{ for some } x \in [-n,n]\right] & \leq 2\sum_{n+1}^{n+m}\PP_{\rho}\left[\begin{array}{cl}
\eta_{s}(0) \geq A(\sqrt{m}, t), \\ \text{for some } s \in [0,t]
\end{array}\right] \\
& \qquad + 2\PP_{\rho}[X \geq m] \\
& \leq 2\useconstant{c:many_particles}m(t+1)e^{-\useconstant{c:many_particles}^{-1}\sqrt{m}}+2e^{-m},
\end{split}
\end{equation}
if $m$ is large enough. If we let $m \to \infty$, the probability above converges to zero, and this implies
\begin{equation}
T_{t}f(\eta_{0}) =\lim_{m \to \infty} T_{t}f(\eta_{0}^{m}).
\end{equation}
In  particular, we have $T_{t}f(\eta_{0})=S_{t}f(\eta_{0})$, $\mu_{\rho}$-almost surely. This applies to any local function $f$ and concludes the proof.

\section{Proof of Proposition~\ref{prop:martingale_large_deviation}}
~
\par In this section we study the martingale introduced in \eqref{eq:martingale} and prove Proposition~\ref{prop:martingale_large_deviation}.

%%%%%
\newconstant{c:tail_bound}
%%%%%

\par The first lemma we prove is a tail bound for the increments of this martingale.
\begin{lemma}\label{lemma:tail_bound}
There exists a positive constant $\useconstant{c:tail_bound}$ that depends only on the density $\rho>0$ such that, for all $t \geq 0$ and $u \geq 0$,
\begin{equation}
\PP_{\rho}[|M_{t+1}-M_{t}| \geq u] \leq \useconstant{c:tail_bound}e^{-\useconstant{c:tail_bound}^{-1}u^{\sfrac{1}{2}}}.
\end{equation}
\end{lemma}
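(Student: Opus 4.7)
The plan is to reduce to $t=0$ by the time-stationarity of $\mu_{\rho}$ under the evolution (restarting the infection at time $t$ by declaring all particles at sites $\leq r_t$ infected leaves the joint law of $(\eta_{t+\cdot},r_{t+\cdot}-r_t)$ unchanged), and then to show that for $u$ sufficiently large both $|r_1-r_0|$ and the integrand term in $M_1-M_0$ are at most $u/2$ except on an event of probability $\leq ce^{-c^{-1}u^{1/2}}$. Small values of $u$ will be absorbed into the final constant $\useconstant{c:tail_bound}$. The two main ingredients will be Lemma~\ref{lemma:many_particles} applied at scale $\sqrt{u}$, which controls the maximum pile height near the front, together with a Poisson domination argument for the upward increments of $r_s$.

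First I would apply Lemma~\ref{lemma:many_particles} with parameter $\sqrt{u}$, translation invariance of $\mu_{\rho}$, and a union bound over the sites of the interval $I=[r_0-u,r_0+u]$ to produce an event $E_1$ of probability at least $1-\useconstant{c:many_particles}(2u+1)\cdot 2\cdot e^{-\useconstant{c:many_particles}^{-1}\sqrt{u}}$ on which $\eta_s(x)\leq A:=A(\sqrt{u},1)\leq c_{\rho}\sqrt{u}$ simultaneously for every $(x,s)\in I\times[0,1]$. The polynomial prefactor $u$ is easily absorbed into $\useconstant{c:tail_bound}$ at the end.

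Next I would control $r_s$ on $E_1$. By \eqref{eq:g_condition}, as long as $r_s\in I$ the $+1$ events of $r$ occur with intensity $\tfrac{1}{2}g(\eta_s(r_s))\leq \Gamma_{+}A/2\leq c\sqrt{u}$. Introducing the stopping time $\sigma=\inf\{s\geq 0:r_s\notin I\}\wedge 1$, the standard comparison for counting processes with bounded predictable intensity shows that the number of $+1$ increments of $r$ in $[0,\sigma]$ is stochastically dominated by a $\poisson(c\sqrt{u})$ random variable $X$. Since each up-jump raises $r$ by exactly $1$ and down-jumps only lower it, the event $\{\sup_{s\leq 1}(r_s-r_0)\geq u/2\}\cap E_1$ is contained in $\{X\geq u/2\}$, whose probability decays like $(ec/\sqrt{u})^{u/2}\leq e^{-u/4}$ for $u$ large. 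For the downward direction I would reuse the argument of Lemma~\ref{lemma:infection_3}: a drop of $r$ by $u/2$ in a unit interval requires the distinguished particle at $r_0$ to have jumped at least $u/2$ times, which has probability at most $\PP[\poisson(\Gamma_{+})\geq u/2]\leq e^{-u/4}$ for $u$ large.

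Combining these estimates yields $\PP_{\rho}[\sup_{s\in[0,1]}|r_s-r_0|\geq u/2]\leq \useconstant{c:tail_bound}e^{-\useconstant{c:tail_bound}^{-1}\sqrt{u}}$, taking care of the $|r_1-r_0|$ contribution to $|M_1-M_0|$. On the intersection of this event with $E_1$ one has $r_s\in I$ for all $s\in[0,1]$, so $\tfrac{1}{2}g(\eta_s(r_s))\charf{\{\eta_s(r_s)\geq 2\}}\leq \Gamma_{+}A/2\leq c\sqrt{u}\leq u/2$ once $u$ is large enough, which controls the integral term. The triangle inequality $|M_1-M_0|\leq |r_1-r_0|+\int_0^1\tfrac{1}{2}g(\eta_s(r_s))\,ds$ then finishes the case $t=0$, and the general case follows by stationarity. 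The most delicate step will be the Poisson domination, since the intensity of the upward-jump process depends on the full history through both $r_s$ and $\eta_s$; this requires writing out the compensator carefully on $E_1$ up to the stopping time $\sigma$ and invoking the classical stochastic-comparison theorem for point processes with bounded predictable intensity.
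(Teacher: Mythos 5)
Your proposal is correct and follows essentially the same strategy as the paper: reduce to $t=0$ by translation invariance, use Lemma~\ref{lemma:many_particles} to control pile heights near the front, bound the excursion of $r_s$ on $[0,1]$, and conclude from the triangle inequality applied to $|M_1-M_0|$. The only substantive difference is that you re-derive the bound on $\sup_{s\leq 1}(r_s-r_0)$ from scratch via Poisson domination of the up-jumps, whereas the paper simply invokes Lemma~\ref{lemma:infection_1} (applied at a time of order $\sqrt{u}$), whose proof already packages exactly that pile-height-plus-Poisson argument.
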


\begin{proof}
By translation invariance, it is enough to consider $t=0$ and, by increasing if necessary the value of $\useconstant{c:tail_bound}$, we can also consider $u \geq 1$.

Consider the event
\begin{equation}
A=\left\{\sup_{s \in [0,1]}|r_{s}-r_{0}| \geq \frac{u}{2}\right\},
\end{equation}
and observe that Lemmas~\ref{lemma:infection_1} and~\ref{lemma:infection_3} imply
\begin{equation}
\PP_{\rho}[A] \leq ce^{-c^{-1}u^{\sfrac{1}{2}}}.
\end{equation}

We also introduce the event
\begin{equation}
B=\left\{ \eta_{s}(x) \geq \frac{u}{\Gamma_{+}}, \text{ for some } (x,s) \in [-u,u] \times [0,1]\right\}.
\end{equation}
Union bound and Lemma~\ref{lemma:many_particles} gives
\begin{equation}
\PP_{\rho}[B] \leq ce^{-c^{-1}u}.
\end{equation}

Finally, on $(A \cup B \cup \{r_{0} \leq \sfrac{u}{2}\})^{c}$, using that $g(k) \leq \Gamma_{+}k$, we obtain
\begin{equation}
\begin{split}
|M_{1}-M_{0}| & \leq |r_{1}-r_{0}|+\left|\int_{0}^{1}\frac{1}{2}g(\eta_{s}(r_{s})) \charf{\eta_{s}(r_{s}) \geq 2} \,\mathrm{d}s\right| \\
& \leq \frac{u}{2}+\frac{1}{2}\Gamma_{+}\frac{u}{\Gamma_{+}} =u,
\end{split}
\end{equation}
and the proof is complete.
\end{proof}

\par Using the tail bound obtained above we can prove the concentration estimates for the martingale $M_{t}$. This proof follows the lines from~\cite{lv}.

\begin{proof}[Proof of Proposition~\ref{prop:martingale_large_deviation}]
We investigate the martingale $M_{t}$ restricted to the integer times. Denote $\mathcal{F}_{n}=\sigma(M_{t}: t \leq n)$ and $X_{n}=M_{n}-M_{n-1}$. Given a positive integer $k$ we define
\begin{equation}
Y_{n}=X_{n}\charf{\{|X_{n}| \leq L_{k}^{\sfrac{1}{4}}\}}-\EE_{\rho}\left[\left.X_{n}\charf{\{|X_{n}| \leq L_{k}^{\sfrac{1}{4}}\}}\,\right|\, \mathcal{F}_{n-1}\right],
\end{equation}
and
\begin{equation}
Z_{n}=X_{n}\charf{\{|X_{n}| > L_{k}^{\sfrac{1}{4}}\}}-\EE_{\rho}\left[\left.X_{n}\charf{\{|X_{n}| > L_{k}^{\sfrac{1}{4}}\}}\,\right|\, \mathcal{F}_{n-1}\right].
\end{equation}

Observe that both $Y_{n}$ and $Z_{n}$ are martingale differences with respect to the filtration $(\mathcal{F}_{n})_{n \geq 0}$ and that $Y_{n}+Z_{n}=X_{n}$.

We easily obtain that
\begin{equation}\label{eq:concentration_1}
\begin{split}
\PP_{\rho}[|M_{L_{k}}| \geq \delta L_{k}] & = \PP_{\rho}\left[\left|\sum_{n=1}^{L_{k}}X_{n}\right| \geq \delta L_{k}\right]\\
& \leq \PP_{\rho}\left[\left|\sum_{n=1}^{L_{k}}Y_{n}\right| \geq \frac{\delta}{2} L_{k}\right]+\PP_{\rho}\left[\left|\sum_{n=1}^{L_{k}}Z_{n}\right| \geq \frac{\delta}{2} L_{k}\right].
\end{split}
\end{equation}

We now focus on the two probabilities on the right hand side of the estimate above.

Notice that $|Y_{n}| \leq 2L_{k}^{\sfrac{1}{4}}$. Hence, Azuma's inequality implies
\begin{equation}\label{eq:concentration_2}
\PP_{\rho}\left[\left|\sum_{n=1}^{L_{k}}Y_{n}\right| \geq \frac{\delta}{2} L_{k}\right] \leq 2e^{-\frac{\delta^{2}}{32}L_{k}^{\sfrac{1}{2}}}.
\end{equation}

As for $Z_{n}$, observe initially that $F_{n}(u):=\PP_{\rho}[|X_{n}| \geq u] \leq \useconstant{c:tail_bound}e^{-\useconstant{c:tail_bound}^{-1}u^{\sfrac{1}{2}}}$, according to Lemma~\ref{lemma:tail_bound}. We now bound
\begin{equation}
\begin{split}
\EE_{\rho}[Z_{n}^{2}]& \leq \EE_{\rho}\left[X_{n}^{2}\charf{\{|X_{n}| > L_{k}^{\sfrac{1}{4}}\}}\right] \\
& = -\int_{L_{k}^{\sfrac{1}{4}}}^{\infty}x^{2} dF_{n}(x) \\
& = -\lim_{M \to \infty} \int_{L_{k}^{\sfrac{1}{4}}}^{M}x^{2} dF_{n}(x) \\
& = - \lim_{M \to \infty} \left( M^{2}F_{n}(M)-L_{k}^{\sfrac{1}{2}}F_{n}(L_{k}^{\sfrac{1}{4}})-\int_{L_{k}^{\sfrac{1}{4}}}^{M} 2xF_{n}(x) dx \right) \\
& \leq  L_{k}^{\sfrac{1}{2}}\useconstant{c:tail_bound}e^{-\useconstant{c:tail_bound}^{-1}L_{k}^{\sfrac{1}{8}}}+\int_{L_{k}^{\sfrac{1}{4}}}^{\infty} 2\useconstant{c:tail_bound}xe^{-\useconstant{c:tail_bound}^{-1}x^{\sfrac{1}{2}}} dx \\
& \leq cL_{k}^{\sfrac{3}{8}}e^{-cL_{k}^{\sfrac{1}{8}}},
\end{split}
\end{equation}
by possibly changing constants.

This implies that
\begin{equation}\label{eq:concentration_3}
\PP_{\rho}\left[\left|\sum_{n=1}^{L_{k}}Z_{n}\right| \geq \frac{\delta}{2}L_{k}\right] \leq \frac{4}{\delta^{2}L_{k}^{2}}\EE\left[\left(\sum_{n=1}^{L_{k}}Z_{n}\right)^{2}\right] \leq ce^{-cL_{k}^{\sfrac{1}{8}}}.
\end{equation}

Combining Equations~\eqref{eq:concentration_1},~\eqref{eq:concentration_2} and~\eqref{eq:concentration_3} easily implies the proposition.
\end{proof}

\bibliographystyle{plain}
\bibliography{mybib}

\end{document}